\newtheorem{thm}{Theorem}[section]
\newtheorem{cor}[thm]{Corollary}
\newtheorem{lem}[thm]{Lemma}
\newtheorem{pro}[thm]{Problem}
\newtheorem{prop}[thm]{Proposition}
\newtheorem{myCli}{Claim}
\theoremstyle{definition}
\newtheorem{defi}[thm]{Definition}
\theoremstyle{remark}
\newtheorem{conj}[thm]{\bf Conjecture}
\numberwithin{equation}{section}
\numberwithin{figure}{section}
\begin{document}
\title[Spectral Tur\'an-type  problems  for the $\alpha$-spectral radius]{Spectral Tur\'an-type  problems  for the $\alpha$-spectral radius of hypergraphs with degree stability}

\author[J. Zheng]{Jian Zheng}
\address{School of  Mathematics and Statistics, Jiangxi Normal University, Nanchang 330022,  China}
\email{zhengj@jxnu.edu.cn}

\author[H. Li]{Honghai Li$^\dag$}
\address{School of  Mathematics and Statistics, Jiangxi Normal University, Nanchang 330022,  China}
\email{lhh@jxnu.edu.cn}
\thanks{$^\dag$The corresponding author. H. Li was supported by National Natural Science Foundation of China (No. 12161047) and   Jiangxi Provincial Natural Science Foundation (No. 20224BCD41001).}

\author[L. Su]{Li Su$^\ddag$}
\address{School of  Mathematics and Statistics, Jiangxi Normal University, Nanchang 330022,  China}
\email{suli@jxnu.edu.cn}
\thanks{$^\ddag$L. Su was supported by National Natural Science Foundation of China (No. 12061038).}


\keywords{Spectral Tur\'an-type problems; $\alpha$-spectral radius; hereditary property;  pattern;  degree stability}

\begin{abstract}
 An $r$-pattern $P$ is  an ordered pair $P=([l],E)$, where $l$ is a positive integer and $E$ is a set of $r$-multisets with elements from $[l]$. An $r$-graph $H$ is said to be  $P$-colorable if there is a homomorphism  $\phi$: $V(H)\rightarrow [l]$ such that  $\{\phi(v_{1}),\ldots,\phi(v_{r})\}\in E$ for every edge $\{v_{1},\ldots,v_{r}\}\in E(H)$. Let $\mathrm{Col}(P)$ denote the family of all $P$-colorable $r$-graphs. This paper studies spectral extremal  problems  for $\alpha$-spectral radius of hypergraphs via  analytic techniques.
 We first prove that for any $r$-pattern $P$, the hypergraph attaining the maximum $\alpha$-spectral radius in $\mathrm{Col}(P)$ is asymptotically regular. Specifically, we establish asymptotically tight lower bounds for the minimum component of the principal eigenvector and the minimum degree of the spectral extremal hypergraphs in  $\mathrm{Col}(P)$.
 Building on this regularity, we further  show that for any family $\mathcal{F}$ of $r$-graphs that is degree-stable with respect to $\mathrm{Col}(P)$, spectral Tur\'an-type  problems can be completely reduced to spectral extremal problems within $\mathrm{Col}(P)$. As an application, we   determine  the  maximum $\alpha$-spectral radius ($\alpha\geq1$) among all $n$-vertex $F^{(r)}$-free $r$-graphs, where $F^{(r)}$ is the $r$-expansion of the color-critical graph $F$. This provides a powerful reduction tool for handling  spectral Tur\'{a}n-type problems in hypergraphs. Finally, leveraging the spectral method, we derive a corresponding edge Tur\'an extremal result. More precisely, we show that if $\mathcal{F}$ is degree-stable with respect to $\mathrm{Col}(P)$, then every $\mathcal{F}$-free edge extremal hypergraph must be a
 $P$-colorable hypergraph.
\end{abstract}

\maketitle

\section{Introduction}

A  \emph{hypergraph} $H$ consists of a set of vertices   $V(H)$  and a set of  edges     $E(H)\subseteq 2^{V(H)}$,  where $2^{V(H)}$ is the power set of
$V(H)$.  The \emph{order} and  \emph{size} of $H$ are defined as  $\nu(H):=|V(H)|$ and  $e(H):=|E(H)|$, respectively.   If every edge of $H$ contains exactly $r$ vertices, then $H$ is called an \emph{$r$-uniform} hypergraph (\emph{$r$}-graph, for short). An ordinary graph is precisely a  $2$-uniform hypergraph. For two $r$-graphs $G$ and $H$, we say  $G$ is  a \emph{subgraph} of $H$ if $V(G)\subseteq V(H)$ and $E(G)\subseteq E(H)$. The subgraph $G$  is  \emph{induced} by $V(G)$  if $E(G)=\{e\in E(H):e\subseteq V(G)\}$. For any vertex $v\in V(H)$, we denote by $H-v$ the subgraph of $H$  induced by $V(H)\backslash\{v\}$.

 Let $\mathcal{F}$ be a family of $r$-graphs. An $r$-graph $G$ is called \emph{$\mathcal{F}$-free} if no member $F\in \mathcal{F}$ is a subgraph of $G$. The \emph{Tur\'an number} $ex(n, \mathcal{F})$ is the maximum size of an $\mathcal{F}$-free $r$-graph on $n$ vertices. The classical Tur\'an's Theorem \cite{T1941} states that  the  balanced complete $l$-partite graph $T_{l}(n)$ (known as the Tur\'an graph) is the unique extremal graph attaining the maximum size among all $n$-vertex $K_{l+1}$-free graphs.
 Tur\'an problems constitute a central theme in extremal combinatorics; we refer to  \cite{K2011} for a comprehensive survey.  In  $2007$,  Nikiforov \cite{N2007} proved a spectral analogue of Tur\'an's Theorem, showing that the Tur\'an graph is also the unique spectral extremal graph. Since then, the  spectral Tur\'an-type
  problems have attracted great attention; see  \cite{LiLF} for  recent  developments.

In \cite{N2014B,N2014A}, Nikiforov conducted a systematic study of the  $\alpha$-spectral radius  of hypergraphs using analytical methods.  The  $\alpha$-spectral radius of an $r$-graph $G$ (see  Section 2 for its definition), denoted by $\lambda^{(\alpha)}(G)$, unifies several fundamental concepts: the largest $Z$-eigenvalue, the largest  $H$-eigenvalue (see \cite{Q2005}), the Lagrangian, and the size of $G$. Consequently, it provides a  unified framework for addressing associated extremal problems. This parameter  was first introduced by Keevash, Lenz and Mubayi\cite{KLM2014}, who  established  two key criteria for spectral extremal problems.
Applying one of these criteria, they determined the maximum $\alpha$-spectral radius  among all $n$-vertex $3$-graphs that avoid the Fano plane. Nikiforov\cite{N2014B} further investigated several extremal problems of the following type:
\begin{pro}
  What is the maximum value of $\lambda^{(\alpha)}(G)$ over all $r$-graphs $G$ satisfying a given property $\mathcal{P}$?
\end{pro}
 Erd\H{o}s, Ko and Rado proved that if $G$ is a $t$-intersecting  $r$-graph on $n$ vertices, then $e(G)\leq\binom{n-t}{r-t}$, with equality holding if and only if  $G=S_{t, n}^r$, where $S_{t, n}^r$ denotes the complete $t$-star of order $n$. They further established a  stability result: there exists a constant $c=c(r, t)>0$ such that if $e(G)>c\binom{n-t}{r-t-1}$, then $G\subseteq S_{t, n}^r$. Building on this stability result, Nikiforov\cite{N2014B} demonstrated that for  sufficiently large $n$,  $S_{t, n}^r$ is the unique extremal hypergraph achieving  the maximum $\alpha$-spectral radius among all $t$-intersecting $n$-vertex $r$-graphs.  Although Keevash, Lenz  and Mubayi \cite{KLM2014}  had previously obtained this result using another criterion, their approach involved a more complex framework. These foundational contributions have significantly advanced the study of extremal problems for the $\alpha$-spectral radius. Subsequently,  Kang, Nikiforov and Yuan \cite{KNY2014}  resolved several extremal problems concerning the $\alpha$-spectral radius of $n$-vertex $k$-partite and $k$-chromatic
hypergraphs,  generalizing earlier work by Mubayi and
Talbot \cite{MT2008}.
Ni, Liu and Kang\cite{NLK2022} determined  the maximum $\alpha$-spectral radius of all cancellative $3$-graphs.
Furthermore, Kang, Liu, Lu and Wang\cite{KLLW2021} investigated the $\alpha$-spectral radius  of Berge-$G$
 hypergraphs. In particular, for $\alpha\geq1$, they determined  the 3-graphs that attain the
maximum $\alpha$-spectral  radius among all Berge-$G$ hypergraphs when $G$ is a path, a cycle or a star.

 Let $H$ be an ordinary graph (i.e., a $2$-graph). The $r$-\emph{expansion} $H^{(r)}$ of $H$ is the $r$-graph obtained from $H$ by replacing each edge with $r-2$ new vertices, where the added vertices are disjoint from $V(H)$ and distinct edges receive disjoint sets of new vertices.
 Let $F_{r,l}$ be the $r$-graph with edge set
\[
E(F_{r,l}) = \big\{[r]\big\} \cup \bigcup_{\{i,j\}\in\binom{[l]}{2}\setminus\binom{[r]}{2}} \{E_{ij}\cup\{i,j\}\},
\]
where the $E_{ij}$ are pairwise disjoint $(r-2)$-sets disjoint from $[l]$. Generalizing the Tur\'an's theorem
 to hypergraphs,  Pikhurko\cite{P2013}  determined the exact
 Tur\'an number of $K_{l+1}^{(r)}$ , while
  Mubayi and Pikhurko\cite{MO2006} established the Tur\'an number of  $F_{r,l+1}$.
For  sufficiently large $n$ and $\alpha\geq r\geq 2$,  Liu et al. \cite{LNWK2024} proved that  the balanced complete $l$-partite $r$-graph $T_l^r(n)$ on $n$
 vertices   is the unique hypergraph  maximizing
 the $\alpha$-spectral radius among all $n$-vertex $K_{l+1}^{(r)}$-free (or $F_{r,l+1}$-free) $r$-graphs. This result generalizes the findings of  Pikhurko and Mubayi.

   Note that for an $r$-graph $G$, the quantity $\lambda^{(r)}(G)/(r-1)!$ is exactly its spectral radius, i.e., the maximum modulus of all eigenvalues of the adjacency tensor of $G$. Thanks to the development of the spectral theory of nonnegative tensors,  the spectral radius of hypergraphs is now relatively well understood, and a large body of research has been devoted to extremal problems for the spectral radius of hypergraphs.
  Gao, Chang, and Hou~\cite{GCH2022} studied the spectral extremal problem for linear $K_{r+1}^{(r)}$-free $r$-graphs. They proved that for sufficiently large $n$, the spectral radius of an $n$-vertex $K_{r+1}^{(r)}$-free linear $r$-graph is at most $n/r$.
  She, Fan, Kang and Hou\cite{SFKH2023} generalized this result by establishing a sharp upper bound on the spectral radius of $F^{(r)}$-free linear $r$-graphs,  where $F$ is a color-critical graph with chromatic number  $k\geq r+1$.  Ellingham, Lu  and Wang \cite{ELW2022} characterized the extremal hypergraph attaining  maximum  spectral radius among all outerplanar $3$-graphs on $n$ vertices  via its $2$-shadow graph.  There  exist numerous results on extremal characterizations of the spectral radius of hypergraphs;  for more details, we refer the reader to  \cite{BIJ2017,CDS2024,FHY2023,KZS2021, KLLW2021, LZ2017,OQY2017, SL2024, WY2024, XHW2023, YYSWZ2021, ZLG2020,ZZ2025}.

Recently, Zheng, Li  and Fan~\cite{ZLF2024} established a spectral stability result for nondegenerate hypergraphs,  generalizing the Keevash--Lenz--Mubayi criterion \cite[Theorem $1.4$]{KLM2014}. Their main result can be stated as follows:

\begin{thm}[\hspace{1sp}\cite{ZLF2024}]\label{oldcri}
Let $r\geq2$, $\alpha>1$, $0<\varepsilon<1$, and $\mathcal{F}$ be a family of $r$-graphs with $\pi(\mathcal{F})>0$. Let $\mathcal{G}_{n}$ be the collection of all $n$-vertex $\mathcal{F}$-free $r$-graphs with minimum degree at least  $(1-\varepsilon)\pi(\mathcal{F})\binom{n}{r-1}$, and let
$\lambda^{(\alpha)}(\mathcal{G}_{n})=\max\{\lambda^{(\alpha)}(G):G\in \mathcal{G}_{n}\}$.
Suppose that
there exists  $N$ such that
 for all $n\geq N$,
\begin{equation}\label{oldcri1}
\lambda^{(\alpha)}(\mathcal{G}_{n}) \ge  \lambda^{(\alpha)}(\mathcal{G}_{n-1}) + (r-r/\alpha)(1-\varepsilon')\pi(\mathcal{F})n^{r-r/\alpha-1},
\end{equation}
where $\varepsilon'=\varepsilon\pi(\mathcal{F})(\alpha-1)/(2r \alpha)$.
Then there exists $n_{0}$ such that if $H$ is an $\mathcal{F}$-free $r$-graph on $n\geq n_{0}$ vertices, then
$$\lambda^{(\alpha)}(H)\leq \lambda^{(\alpha)}(\mathcal{G}_{n}).$$
In addition, if the equality holds, then $H\in\mathcal{G}_{n}$.
\end{thm}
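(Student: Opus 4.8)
The plan is to prove the contrapositive structural statement: any \(\mathcal F\)-free \(r\)-graph that attains the maximum \(\alpha\)-spectral radius on \(n\) vertices must already satisfy the minimum-degree bound, i.e.\ belong to \(\mathcal G_n\). The growth hypothesis \eqref{oldcri1} is exactly the leverage that makes deleting a low-degree vertex too expensive to be compatible with extremality. Write \(P_H\) for the polynomial form, so \(\lambda^{(\alpha)}(H)=\max_{\|\mathbf x\|_\alpha=1}P_H(\mathbf x)\), and set \(f(n)=\max\{\lambda^{(\alpha)}(H):H \text{ is }\mathcal F\text{-free on }n\text{ vertices}\}\); note \(f(n)\ge\lambda^{(\alpha)}(\mathcal G_n)\) and \(f\) is nondecreasing. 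Let \(H\) be extremal with nonnegative eigenvector \(\mathbf x\), \(\|\mathbf x\|_\alpha=1\), taken positive on the component realising the maximum. Splitting \(P_H(\mathbf x)=\lambda^{(\alpha)}(H)\) according to whether an edge meets a fixed vertex \(v\), and using the stationarity equations (which say the edges through \(v\) contribute exactly \(r\lambda^{(\alpha)}(H)x_v^{\alpha}\)), gives \(\lambda^{(\alpha)}(H)=P_{H-v}(\mathbf x')+r\lambda^{(\alpha)}(H)x_v^{\alpha}\), where \(\mathbf x'\) is the restriction of \(\mathbf x\) to \(V(H)\setminus\{v\}\). Since \(\mathbf x'/\|\mathbf x'\|_\alpha\) is a test vector for the \(\mathcal F\)-free graph \(H-v\) and \(\|\mathbf x'\|_\alpha^{r}=(1-x_v^{\alpha})^{r/\alpha}\), we get \(P_{H-v}(\mathbf x')\le \lambda^{(\alpha)}(H-v)(1-x_v^{\alpha})^{r/\alpha}\), hence
\[
\lambda^{(\alpha)}(H-v)\ \ge\ \lambda^{(\alpha)}(H)\,\frac{1-r x_v^{\alpha}}{(1-x_v^{\alpha})^{r/\alpha}}\ =\ \lambda^{(\alpha)}(H)\bigl(1-(r-r/\alpha)x_v^{\alpha}+O(x_v^{2\alpha})\bigr).
\]
Thus deleting \(v\) costs at most \((r-r/\alpha)\lambda^{(\alpha)}(H)x_v^{\alpha}\), up to lower order.

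The heart of the argument, and the step I expect to be hardest, is to bound the eigenweight \(x_v^{\alpha}\) of a \emph{low-degree} vertex. The stationarity equations identify \(\lambda^{(\alpha)}(H)x_v^{\alpha}\), up to the fixed normalising constant, with the edge-weight \(\sum_{e\ni v}\prod_{i\in e}x_i\le d_H(v)\,x_{\max}^{r}\), so the whole scheme reduces to the entrywise bound \(x_{\max}\le(1+o(1))n^{-1/\alpha}\) for the extremal eigenvector. This is where near-extremality must be exploited: if the weight concentrated on \(o(n)\) coordinates, estimating \(P_H(\mathbf x)\) directly would force \(\lambda^{(\alpha)}(H)=o\!\left(\pi(\mathcal F)n^{r-r/\alpha}\right)\), contradicting \(f(n)\ge\lambda^{(\alpha)}(\mathcal G_n)\sim \pi(\mathcal F)n^{r-r/\alpha}\); so the weight must spread and \(x_{\max}\) cannot exceed the uniform value \(n^{-1/\alpha}\) by more than a \((1+o(1))\) factor. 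I expect an honest proof of this to need a stability or supersaturation input, since the crude codegree estimate \(\sum_u \mathrm{codeg}(v,u)x_u^{r}\le\binom{n-2}{r-2}\) only controls things when \(\alpha\ge r\). Granting it, a vertex with \(d_H(v)<(1-\varepsilon)\pi(\mathcal F)\binom{n}{r-1}\) satisfies \(\lambda^{(\alpha)}(H)x_v^{\alpha}\le(1-\varepsilon)\pi(\mathcal F)n^{r-1-r/\alpha}(1+o(1))\).

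Combining the two estimates, deleting such a low-degree vertex yields \(f(n-1)\ge\lambda^{(\alpha)}(H-v)\ge f(n)-D_n\), where \(D_n=(r-r/\alpha)(1-\varepsilon)\pi(\mathcal F)n^{r-1-r/\alpha}(1+o(1))\), while \eqref{oldcri1} provides the increment \(I_n=(r-r/\alpha)(1-\varepsilon')\pi(\mathcal F)n^{r-1-r/\alpha}\). Because \(\varepsilon'=\varepsilon\pi(\mathcal F)(\alpha-1)/(2r\alpha)<\varepsilon\), we obtain \(I_n-D_n\ge c\,n^{r-1-r/\alpha}\) for some \(c>0\) and all large \(n\); this clean cancellation \(\varepsilon-\varepsilon'>0\) (together with the explicit value of \(\varepsilon'\), which absorbs the \(o(1)\) errors) is the mechanism that converts the quantitative hypothesis into the structural conclusion.

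Finally I would close the argument by a downward propagation that needs no separate base case. Put \(\Delta_m=f(m)-\lambda^{(\alpha)}(\mathcal G_m)\ge0\) and suppose some large \(n\) is a counterexample, i.e.\ either \(\Delta_n>0\), or \(\Delta_n=0\) with an extremal graph outside \(\mathcal G_n\); in either case the extremal \(H\) has a low-degree vertex. The deletion bound and \(\lambda^{(\alpha)}(\mathcal G_n)\ge\lambda^{(\alpha)}(\mathcal G_{n-1})+I_n\) then give \(\Delta_{n-1}\ge\Delta_n+(I_n-D_n)\), so a strict counterexample propagates to \(n-1\) with \(\Delta_m\) strictly increasing as \(m\) decreases. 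Iterating down to the threshold \(n_0\) beyond which the entrywise bound and \eqref{oldcri1} are effective yields \(\Delta_{n_0}\ge\sum_{m=n_0+1}^{n}(I_m-D_m)\). Since \(\alpha>1\) forces the exponent \(r-1-r/\alpha>-1\), the partial sums on the right diverge as \(n\to\infty\), contradicting the fixed bound \(\Delta_{n_0}\le f(n_0)<\infty\). Hence for all sufficiently large \(n\) we have \(\Delta_n=0\) and every extremal \(\mathcal F\)-free graph lies in \(\mathcal G_n\), which is precisely the stated inequality \(\lambda^{(\alpha)}(H)\le\lambda^{(\alpha)}(\mathcal G_n)\) together with its equality characterisation.
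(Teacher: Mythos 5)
Your overall scheme — reduce everything to showing that a spectral extremal $\mathcal{F}$-free graph cannot have a low-degree vertex, then telescope the excess $\Delta_m=f(m)-\lambda^{(\alpha)}(\mathcal{G}_m)$ downward and use divergence of $\sum_m m^{r-1-r/\alpha}$ (valid since $r-1-r/\alpha>-1$ for $\alpha>1$) — is coherent, and your deletion inequality is exactly the paper's (\ref{e8}). But the proof has a genuine gap precisely at the step you yourself flag and then ``grant'': the entrywise bound $x_{\max}\le(1+o(1))n^{-1/\alpha}$ for a principal eigenvector of an extremal graph in $Mon(\mathcal{F})$. Your heuristic (concentration on $o(n)$ coordinates would force $\lambda^{(\alpha)}(H)=o(n^{r-r/\alpha})$) only excludes gross concentration; it cannot rule out $x_{\max}=Cn^{-1/\alpha}$ with a fixed constant $C>1$. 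And your mechanism genuinely needs the sharp constant: with $x_{\max}\le Cn^{-1/\alpha}$ the deletion cost becomes $D_n\le(r-r/\alpha)(1-\varepsilon)C^{r}\pi(\mathcal{F})n^{r-1-r/\alpha}(1+o(1))$, and since $\varepsilon'=\varepsilon\pi(\mathcal{F})(\alpha-1)/(2r\alpha)$ is much smaller than $\varepsilon$, already a modest $C$ (anything beyond roughly $(1+\varepsilon)^{1/r}$) gives $(1-\varepsilon)C^{r}>1-\varepsilon'$ and destroys the cancellation $I_n-D_n>0$ on which the whole telescoping rests. Such near-flatness of the extremal eigenvector is only available in this circle of ideas for clonal families (Lemma \ref{clonal}, from \cite{CDS2024}) or for $Col(P)$ (Theorem \ref{Co}(2)); $Mon(\mathcal{F})$ is in general not clonal, and indeed flatness fed back through the eigenequations essentially yields the minimum-degree conclusion of the theorem itself, so assuming it is dangerously close to assuming what is to be proved.

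It is also worth contrasting your architecture with the intended one (the theorem is quoted from \cite{ZLF2024}, but the same mechanism is visible in this paper's proof of Theorem \ref{tp}): the induction there runs \emph{upward}, not downward, and works with $x_{\min}$, not $x_{\max}$. Lemma \ref{t3} (Nikiforov) gives, unconditionally for any hereditary family, \emph{infinitely many} $n$ at which $(x_{\min}(H_n))^{\alpha}\ge(1-o(1))/n$ (Lemma \ref{t4}); Lemma \ref{t2} converts this into $\delta(H_n)\ge(1-\varepsilon)\pi(\mathcal{F})\binom{n}{r-1}$ via H\"older plus Maclaurin at a minimum-degree vertex, crucially using the $x_{\min}$ \emph{lower} bound to charge every missing $(r-1)$-set a weight of order $n^{-(r-1)}$ — this is what avoids the constant-factor loss that kills your Hölder-only variant; this furnishes a base case $H_{n_1}\in\mathcal{G}_{n_1}$, and then the growth hypothesis (\ref{oldcri1}) propagates the bound $(x_{\min}(H_{n+1}))^{\alpha}\ge(1-\varepsilon')/(n+1)$ step by step upward, again via (\ref{e8}) run as a contradiction. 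The upward direction succeeds because its base case is only needed at infinitely many $n$, which Lemma \ref{t3} supplies for free, whereas your downward propagation requires the hard flatness estimate uniformly at \emph{every} large $n$. To repair your argument you would have to replace the $x_{\max}$ step by the Lemma \ref{t2} mechanism, and since that mechanism takes an $x_{\min}$ lower bound as input, you would be forced back into the upward induction.
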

By applying this Theorem \ref{oldcri}, Zheng et al. determined the maximum $\alpha$-spectral radius for some classes of hypergraphs and characterized the corresponding extremal hypergraphs (see Section $4$ in \cite{ZLF2024}),  which demonstrates the power of the  spectral stability result. However,  the condition (\ref{oldcri1}) is generally difficult to verify. Motivated by this challenge, we seek to develop a simpler and more feasible method to address spectral Tur\'an-type problems. The necessary definitions are presented below.

Let  $G$ be an $r$-graph on $n$ vertices.  Assume that $V(G)$  has a partition $V(G)=V_{1}\cup\ldots\cup V_{l}$, and
 this partition is called \emph{balanced} if $V_{1},\ldots,V_{l}$ have almost equal sizes, namely $\lfloor n/l\rfloor \leq|V_{i}|\leq\lceil n/l\rceil$ for all $i \in [l]$.
Then  $G$ is called \emph{$l$-partite} (resp. \emph{$l$-chromatic}) if for any edge $e\in E(G)$, $|e \cap V_{i}|\leq1$ (resp.
 $|e \cap V_{i}|\leq r-1$)
 for all $i\in [l]$;  furthermore, an $l$-partite (resp. $l$-chromatic) $r$-graph $G$ is called \emph{complete} if it is edge-maximal with respect to the given partition constraints (i.e.  $\forall e\in \binom{[n]}{r}$, $e\in E(G)$ whenever $|e \cap V_{i}|\leq1$ (resp. $|e \cap V_{i}|\leq r-1$)  for $i\in [l]$). Note that if an $l$-partite $r$-graph has at least one edge, then $l\geq r$.  We denote by $T^{r}_{l}(n)$ (resp.  $Q^{r}_{l}(n)$)
the balanced complete $l$-partite (resp. $l$-chromatic) $r$-graph on $n$ vertices. Clearly, $T^{2}_{l}(n)$ and $Q^{2}_{l}(n)$ coincide, and both are isomorphic to Tur\'an graph $T_{l}(n)$.

An \emph{$r$-multiset} is a  multiset of cardinality $r$. An \emph{$r$-pattern} is an ordered pair $P=([l],E)$ where $l$ is a positive integer and $E$ is a set of $r$-multisets with elements from $[l]$. Clearly,  the $r$-pattern is a generalization of the $r$-graph. For convenience, we  use $E$ (or an $r$-graph $G$ with $E(G)=E$)  to represent the pattern $P$ when the context is clear. Given an $r$-graph $H$ and an $r$-pattern $P=([l],E)$, a map $\phi$: $V(H)\rightarrow [l]$ is a \emph{homomorphism} from $H$ to $P$ if the $r$-multiset $\{\phi(v_{1}),\ldots,\phi(v_{r})\}$ belongs to $E$ for every edge $\{v_{1},\ldots,v_{r}\}\in E(H)$. We say $H$ is \emph{$P$-colorable} if there is a homomorphism from $H$ to $P$. For example, any $l$-partite $r$-graph is $K^{r}_{l}$-colorable, where $K^{r}_{l}$ is the complete $r$-graph on $l$ vertices. Denote by $\mathrm{Col}(P)$   the family of all $P$-colorable $r$-graphs.
The notion of pattern was introduced by Pikhurko \cite{PIK2014} in the study of  hypergraph Tur\'an density and has also been utilized in \cite{HLL2023,LP2023}.

This paper develops a unified spectral framework for extremal problems concerning the $\alpha$-spectral radius of hypergraphs. We begin with a systematic review of the necessary terminology, notation, and related results.  In Section 3,   we   prove that  for any $r$-pattern $P$, a hypergraph attaining the maximum $\alpha$-spectral radius within $\mathrm{Col}(P)$  is asymptotically regular (see Theorem~\ref{Co}).  An important consequence of this regularity is that the vertex partition of the spectral extremal   $k$-chromatic hypergraph is approximately balanced (see Theorem~\ref{chro}). This result provides  partial support for a  conjecture of Kang, Nikiforov, and Yuan regarding the structure of the spectral extremal $k$-chromatic hypergraph  \cite{KNY2014}.
In Section 4, we establish a general reduction theorem: if a family $\mathcal{F}$ of $r$-graphs  is degree-stable with respect to  $\mathrm{Col}(P)$, then every $\mathcal{F}$-free $r$-graph attaining the maximum $\alpha$-spectral radius must be $P$-colorable (see Theorem~\ref{tp}).  A major application arises by specializing to the case $P = K_{l}^r$. For any family $\mathcal{F}$ that is degree-stable with respect to $\mathrm{Col}(K_{l}^r)$ and for all sufficiently large $n$, the balanced complete $l$-partite $r$-graph $T_l^r(n)$ achieves the maximum $\alpha$-spectral radius ($\alpha \geq 1$) among all $n$-vertex $\mathcal{F}$-free $r$-graphs. This directly resolves the spectral Tur\'{a}n-type problem for the $r$-expansion $F^{(r)}$ of any color-critical graph $F$ (see Theorem \ref{ccg}).
In the final section, we solve edge extremal problems using spectral methods, analyze  relations between  edge extremal hypergraphs and  spectral extremal hypergraphs, and pose an open  question.





\section{Preliminaries}
 A property of $r$-graphs is a family of $r$-graphs closed under isomorphisms.  For a property $\mathcal{P}$, denoted by $\mathcal{P}_{n}$  the collection of $r$-graphs in $\mathcal{P}$ of order $n$. A property is called \emph{monotone} if it is closed under taking subgraphs, and \emph{hereditary} if it is closed under taking induced subgraphs. Given a  family  $\mathcal{F}$ of $r$-graphs, the class of all $\mathcal{F}$-free $r$-graphs constitutes a monotone property, denoted by $Mon(\mathcal{F})$. Similarly, the class of all $r$-graphs that do not contain any $F\in \mathcal{F}$ as an induced subgraph  forms a hereditary property. Evidently, every monotone property is hereditary. Throughout our discussion,  we  assume that any hereditary property $\mathcal{P}$ of $r$-graphs satisfies the following conditions:
 \begin{itemize}
  \item [\rm (a)]
  There exists an $r$-graph $G\in\mathcal{P}$ that contains at least one edge.
    \item [\rm (b)]
    For any $H\in \mathcal{P}$, the disjoint union  of $H$ and an isolated vertex belongs to $\mathcal{P}$.
  \end{itemize}

A central topic in extremal hypergraph theory is the following: Given a hereditary property $\mathcal{P}$ of $r$-graphs, determine
$$ex(\mathcal{P},n):=\max_{G\in \mathcal{P}_{n}}e(G).$$
By the Katona-Nemetz-Simonovits averaging argument \cite{KNS1964}, the ratio $ex(\mathcal{P},n)/\binom{n}{r}$ is non-increasing in $n$.
Consequently, the limit
\begin{equation}\label{eltt}
\hat{\pi}(\mathcal{P}):=\lim\limits_{n\to \infty}\frac{ex(\mathcal{P},n)}{\binom{n}{r}}
\end{equation}
 exists and is called the \emph{edge density} of $\mathcal{P}$. Thus, determining the asymptotic behavior of $ex(\mathcal{P},n)$ is equivalent to  finding  $\hat{\pi}(\mathcal{P})$. When $\mathcal{P}=Mon(\mathcal{F})$ for
a family  $\mathcal{F}$ of $r$-graphs,  $ex(\mathcal{P},n)$ is known as the Tur\'an number of $\mathcal{F}$, and
$\hat{\pi}(\mathcal{P})$ corresponds to the well-studied \emph{Tur\'an density} $\pi(\mathcal{F})$ of $\mathcal{F}$.

For  an $r$-graph $G$ on $n$ vertices and $\alpha\geq1$, the \emph{Lagrangian polynomial} $P_{G}(\mathbf{x})$ of $G$ is defined
 as $$P_{G}(\mathbf{x})=r!\sum_{\{i_{1},\ldots,i_{r}\}\in E(G)}x_{i_{1}}\cdots x_{i_{r}},$$
 and the \emph{$\alpha$-spectral radius} of $G$ is defined as
$$\lambda^{(\alpha)}(G)=\max_{\|\mathbf{x}\|_{\alpha}=1}P_{G}(\mathbf{x}),$$
where $\mathbf{x}=(x_{1},\ldots,x_{n})\in \mathbb{R}^{n}$  and $\|\mathbf{x}\|_{\alpha}:=(|x_{1}|^{\alpha}+\cdots +|x_{n}|^{\alpha})^{1/\alpha}$. If $\mathbf{x}\in \mathbb{R}^{n}$ is a vector such that $\|\mathbf{x}\|_{\alpha}=1$ and
$\lambda^{(\alpha)}(G)=P_{G}(\mathbf{x})$, then $\mathbf{x}$ is called an \emph{eigenvector} of $G$ corresponding to $\lambda^{(\alpha)}(G)$. Clearly, there always exists a nonnegative eigenvector corresponding to $\lambda^{(\alpha)}(G)$, called  a \emph{principal eigenvector} of $G$.
Moreover, if a principal eigenvector $\mathbf{x}$ is strictly  positive (i.e., $x_{v}>0$ for all $v\in V(G)$), then we call it a \emph{Perron-Frobenius eigenvector} of $G$.

The classical Perron-Frobenius theory for \( \alpha = r = 2 \)  guarantees that the principal eigenvector \( \mathbf{x} \) of any connected \( 2 \)-graph \( H \) is also its Perron-Frobenius eigenvector. By extending this to  $r$-graphs $G$, Nikiforov \cite{N2014B} established a comprehensive Perron-Frobenius theory for \( \lambda^{(\alpha)}(G) \). Specifically, Nikiforov \cite[Theorem $5.10$]{N2014B} demonstrated that this holds true if \( \alpha\geq r \geq 2 \). However, for \( 1 < \alpha < r \), it is shown by \cite[Proposition $4.3$]{N2014B} that \( G \) may not have Perron-Frobenius eigenvector.
 Let $\mathbb{PE}_{\alpha}(G)$ denote the set of all principal eigenvectors for $\lambda^{(\alpha)}(G)$. Define
$$x_{\textup{min}}^{(\alpha)}(G):=\inf_{\mathbf{x} \in \mathbb{PE}_{\alpha}(G)}\mathbf{x}_{\textup{min}},$$
where $\mathbf{x}_{\textup{min}}$ represents the minimum component of the vector $\mathbf{x}$. We simply write $x_{\textup{min}}(G)$ instead of $x_{\textup{min}}^{(\alpha)}(G)$   when no confusion occurs.

For $\alpha>1$, the principal eigenvector $\mathbf{x}=(x_{1},\ldots,x_{n})$ of an
 $r$-graph $G$ satisfies the following equations derived from Lagrange's method:
\begin{equation}\label{eigen}
\lambda^{(\alpha)}(G)x_{i}^{\alpha-1}=(r-1)!\sum_{\{i,i_{2},\ldots,i_{r}\}\in E(G)}x_{i_{2}}\cdots x_{i_{r}},~ \mbox{for}~
1\leq i\leq n.
\end{equation}

\begin{lem}[\hspace{1sp}\cite{N2014A,N2014B}]\label{conti}
If $\alpha\geq1$ and $G$ is an $r$-graph, then $\lambda^{(\alpha)}(G)$ is an increasing and continuous function in $\alpha$.
Moreover,
$$\lim\limits_{\alpha\to \infty}\lambda^{(\alpha)}(G)=r!e(G).$$
\end{lem}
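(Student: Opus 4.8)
The plan is to work directly from the variational definition $\lambda^{(\alpha)}(G)=\max_{\|\mathbf{x}\|_{\alpha}=1}P_{G}(\mathbf{x})$, exploiting two structural features of $P_G$: it has nonnegative coefficients, and it is homogeneous of degree $r$. The first lets me restrict attention to nonnegative vectors (replacing each coordinate by its absolute value never decreases $P_G$ and preserves $\|\cdot\|_\alpha$), so the maximum is attained on the compact nonnegative part of the $\ell^{\alpha}$-sphere; the second lets me rescale any nonzero vector onto the relevant unit sphere while tracking $P_G$ through a power of the rescaling factor. The whole argument then rests on the elementary comparison of $\ell^{p}$-norms on $\mathbb{R}^{n}$: for $1\le \alpha\le \beta$ one has $\|\mathbf{x}\|_{\beta}\le \|\mathbf{x}\|_{\alpha}\le n^{1/\alpha-1/\beta}\|\mathbf{x}\|_{\beta}$.

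For monotonicity, fix $1\le \alpha\le \beta$ and let $\mathbf{x}\ge 0$ be an optimal vector for $\alpha$, so that $\|\mathbf{x}\|_{\alpha}=1$ and $P_{G}(\mathbf{x})=\lambda^{(\alpha)}(G)$. Then $\|\mathbf{x}\|_{\beta}\le 1$, so $\mathbf{y}:=\mathbf{x}/\|\mathbf{x}\|_{\beta}$ satisfies $\|\mathbf{y}\|_{\beta}=1$, and by homogeneity $P_{G}(\mathbf{y})=P_{G}(\mathbf{x})/\|\mathbf{x}\|_{\beta}^{\,r}\ge P_{G}(\mathbf{x})$. Hence $\lambda^{(\beta)}(G)\ge \lambda^{(\alpha)}(G)$, giving the increasing property.

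For continuity, I would run the same rescaling in the reverse direction. Taking an optimal $\mathbf{y}\ge 0$ for $\beta$ and using $\|\mathbf{y}\|_{\alpha}\le n^{1/\alpha-1/\beta}$, rescaling to $\ell^{\alpha}$-unit length yields $\lambda^{(\alpha)}(G)\ge n^{-r(1/\alpha-1/\beta)}\lambda^{(\beta)}(G)$. Combined with monotonicity this gives the two-sided estimate
$$\lambda^{(\alpha)}(G)\le \lambda^{(\beta)}(G)\le n^{\,r(1/\alpha-1/\beta)}\lambda^{(\alpha)}(G),$$
and since $n^{\,r(1/\alpha-1/\beta)}\to 1$ as $\beta\to\alpha$, a squeeze argument gives continuity at every $\alpha\ge 1$ (taking $\beta\downarrow\alpha$ for right-continuity and $\alpha\uparrow\beta$ for left-continuity). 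I expect this quantitative norm-comparison step to be the only genuine technical point, although it remains elementary.

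Finally, for the limit the upper bound is immediate: if $\|\mathbf{x}\|_{\alpha}=1$ and $\mathbf{x}\ge 0$ then every coordinate satisfies $x_{i}\le 1$, so each monomial $\prod_{i\in e}x_{i}\le 1$ and therefore $P_{G}(\mathbf{x})\le r!\,e(G)$; thus $\lambda^{(\alpha)}(G)\le r!\,e(G)$ for all $\alpha$. For the matching lower bound I would test the uniform vector $\mathbf{x}=(n^{-1/\alpha},\dots,n^{-1/\alpha})$, which has $\|\mathbf{x}\|_{\alpha}=1$ and gives $P_{G}(\mathbf{x})=r!\,e(G)\,n^{-r/\alpha}$. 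Since $\lambda^{(\alpha)}(G)$ is increasing and bounded above by $r!\,e(G)$, the limit exists, and letting $\alpha\to\infty$ in the sandwich $r!\,e(G)\,n^{-r/\alpha}\le \lambda^{(\alpha)}(G)\le r!\,e(G)$ forces it to equal $r!\,e(G)$.
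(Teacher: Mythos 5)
Your proof is correct, and there is nothing in the paper to compare it against: Lemma \ref{conti} is stated without proof, being imported directly from Nikiforov \cite{N2014A,N2014B}. Your self-contained argument — reduction to nonnegative vectors, the norm comparison $\|\mathbf{x}\|_{\beta}\le\|\mathbf{x}\|_{\alpha}\le n^{1/\alpha-1/\beta}\|\mathbf{x}\|_{\beta}$ combined with the degree-$r$ homogeneity of $P_{G}$ for monotonicity and the two-sided squeeze for continuity, and the sandwich $r!\,e(G)\,n^{-r/\alpha}\le\lambda^{(\alpha)}(G)\le r!\,e(G)$ for the limit — is sound at every step and is essentially the standard route taken in Nikiforov's original treatment.
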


\medskip{For a hereditary property $\mathcal{P}$ of $r$-graphs, define}
$$\lambda^{(\alpha)}(\mathcal{P},n):=\max_{G\in \mathcal{P}_{n}}\lambda^{(\alpha)}(G).$$
By analogy with the edge  density in \eqref{eltt}, Nikiforov formulated the spectral analogue as follows:

\begin{lem}[\hspace{1sp}\cite{N2014A,N2014B}]\label{exi}
Let $\alpha\geq 1$. If $\mathcal{P}$ is a hereditary property of $r$-graphs, then the limit
$$\lambda^{(\alpha)}(\mathcal{P}):=\lim\limits_{n\to \infty}\lambda^{(\alpha)}(\mathcal{P},n)n^{r/\alpha-r}$$
exists. If $\alpha=1$, then $\lambda^{(1)}(\mathcal{P},n)$ is increasing, and so
$$\lambda^{(1)}(\mathcal{P},n)\leq \lambda^{(1)}(\mathcal{P}).$$
If $\alpha>1$, then $\lambda^{(\alpha)}(\mathcal{P})$ satisfies
$$\lambda^{(\alpha)}(\mathcal{P})\leq\frac{\lambda^{(\alpha)}(\mathcal{P},n)n^{r/\alpha}}{(n)_{r}},$$
where $(n)_{r}=n(n-1)\cdots (n-r+1)$.
\end{lem}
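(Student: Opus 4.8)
The plan is to reduce the whole statement to the convergence of the single normalized sequence $b_n:=\lambda^{(\alpha)}(\mathcal{P},n)\,n^{r/\alpha}/(n)_r$. Since $a_n:=\lambda^{(\alpha)}(\mathcal{P},n)\,n^{r/\alpha-r}=b_n\,(n)_r/n^r$ and $(n)_r/n^r\to1$, the two sequences share the same limit whenever one exists; so proving that $b_n$ converges simultaneously gives the existence of $\lambda^{(\alpha)}(\mathcal{P})$, and if $b_n$ turns out to be non-increasing it also yields $\lambda^{(\alpha)}(\mathcal{P})=\inf_n b_n\le b_n$. The case $\alpha=1$ is elementary and I would dispatch it first: condition~(b) lets me adjoin an isolated vertex to an optimal graph without changing its Lagrangian polynomial (an optimal vector assigns weight $0$ to the new vertex), so $\lambda^{(1)}(\mathcal{P},n)$ is non-decreasing; and for any vector with $\sum_i x_i=1$, $x_i\ge0$, one has $\sum_{\{i_1,\dots,i_r\}}x_{i_1}\cdots x_{i_r}\le\frac1{r!}\bigl(\sum_i x_i\bigr)^r=\frac1{r!}$, whence $\lambda^{(1)}(G)\le1$ for every $r$-graph $G$. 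A bounded non-decreasing sequence converges, giving $\lambda^{(1)}(\mathcal{P},n)\le\lambda^{(1)}(\mathcal{P})$.

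For $\alpha>1$ the engine is an averaging argument over induced subgraphs. I would fix an extremal $H\in\mathcal{P}_N$ with principal eigenvector $\mathbf{x}$ (so $\|\mathbf{x}\|_\alpha=1$ and $P_H(\mathbf{x})=\lambda^{(\alpha)}(\mathcal{P},N)$), and for each $n$-set $S\subseteq V(H)$ consider the induced subgraph $H[S]\in\mathcal{P}_n$ (hereditary) together with the restricted vector $\mathbf{x}_S$. Writing $s_S:=\sum_{i\in S}x_i^\alpha$, the definition of $\lambda^{(\alpha)}$ gives $P_{H[S]}(\mathbf{x}_S)\le\lambda^{(\alpha)}(H[S])\,\|\mathbf{x}_S\|_\alpha^{\,r}\le\lambda^{(\alpha)}(\mathcal{P},n)\,s_S^{\,r/\alpha}$. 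Summing over all $S$ and using that each edge of $H$ lies in exactly $\binom{N-r}{n-r}$ of the chosen sets produces the identity $\sum_{|S|=n}P_{H[S]}(\mathbf{x}_S)=\binom{N-r}{n-r}\lambda^{(\alpha)}(\mathcal{P},N)$, together with $\sum_{|S|=n}s_S=\binom{N-1}{n-1}$, so that the average value of $s_S$ is $n/N$.

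When $\alpha\ge r$ the exponent satisfies $r/\alpha\le1$, so $t\mapsto t^{r/\alpha}$ is concave and Jensen's inequality yields $\sum_{|S|=n}s_S^{\,r/\alpha}\le\binom{N}{n}(n/N)^{r/\alpha}$. Combining this with the identity above, using the evaluation $\binom{N}{n}/\binom{N-r}{n-r}=(N)_r/(n)_r$, and finally multiplying through by $N^{r/\alpha}/(N)_r$, collapses everything to $b_N\le b_n$ for all $N\ge n\ge r$. Thus $b_n$ is non-increasing and bounded below by $0$, so $\lim_n b_n=\inf_n b_n=:\lambda^{(\alpha)}(\mathcal{P})$ exists and $\lambda^{(\alpha)}(\mathcal{P})\le b_n=\lambda^{(\alpha)}(\mathcal{P},n)\,n^{r/\alpha}/(n)_r$, which is exactly the asserted bound.

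The hard part will be the regime $1<\alpha<r$, where $r/\alpha>1$ makes $t\mapsto t^{r/\alpha}$ convex and reverses Jensen's inequality, so the clean monotonicity $b_N\le b_n$ is no longer available; moreover the only general substitute, $s_S^{\,r/\alpha}\le s_S$ (valid since $s_S\le1$), is far too lossy and fails to recover the correct order $n^{\,r-r/\alpha}$. Overcoming this requires controlling the \emph{spread} of the extremal eigenvector — for instance an upper bound on $x_{\max}$ forcing $\mathbf{x}$ to be asymptotically balanced — so that the mass $s_S$ is distributed evenly enough across the sampled subsets for a Jensen-type upper bound to survive. This is precisely the delicate point; the resolution rests on a finer analysis of the extremal eigenvector, of the same flavour as the asymptotic regularity phenomena that reappear later in this paper (cf. Theorem~\ref{Co}).
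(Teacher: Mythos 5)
Your treatment of $\alpha=1$ and the reduction to the convergence of $b_n:=\lambda^{(\alpha)}(\mathcal{P},n)\,n^{r/\alpha}/(n)_r$ are fine, and your subset-averaging argument does establish the monotonicity $b_N\le b_n$ when $\alpha\ge r$. But the proposal has a genuine gap: the regime $1<\alpha<r$ is exactly where your method breaks (as you yourself concede), and the lemma is asserted for all $\alpha>1$. The ``finer analysis of the extremal eigenvector'' you invoke as a fix is not available at this point without circularity: the asymptotic balancedness results of this flavour (Lemma~\ref{t3}, Lemma~\ref{t4}, Theorem~\ref{Co}) are all proved \emph{after} and \emph{using} the present lemma, so you cannot lean on them here. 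As it stands, the proof is incomplete for $1<\alpha<r$.

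The paper's proof shows the missing case needs no averaging at all. Take the extremal $G\in\mathcal{P}_n$ with principal eigenvector $\mathbf{x}$, and delete the single vertex $k$ of minimum weight, so $x_k^\alpha\le 1/n$ automatically. The eigenequation \eqref{eigen} gives $P_{G-k}(\mathbf{x}')=\lambda_n(1-rx_k^\alpha)$, where $\mathbf{x}'$ is $\mathbf{x}$ with the $k$-th entry removed, and heredity plus rescaling gives
\begin{equation*}
\lambda_n\bigl(1-rx_k^{\alpha}\bigr)\;\le\;\lambda_{n-1}\bigl(1-x_k^{\alpha}\bigr)^{r/\alpha},
\end{equation*}
which is inequality \eqref{e8}. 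Now Fact~1 of the paper --- the function $f(x)=(1-rx)/(1-x)^{r/\alpha}$ is decreasing on $[0,1)$ for \emph{every} $\alpha>1$, whether $\alpha\le r$ or $\alpha>r$ --- lets one replace $x_k^\alpha$ by its worst-case value $1/n$, yielding $\lambda_{n-1}/\lambda_n\ge(1-r/n)/(1-1/n)^{r/\alpha}$, which rearranges precisely to $b_{n-1}\ge b_n$. So the monotone quantity you wanted is obtained for all $\alpha>1$ in one step: by removing only the minimum-weight vertex, the removed mass is pinned at $\le 1/n$ and the convexity/concavity dichotomy of $t\mapsto t^{r/\alpha}$ never enters. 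If you want to salvage your write-up, replace the subset-averaging engine by this single-vertex deletion; your $\alpha=1$ argument and the final limiting step can stay as they are.
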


\noindent {\bf Remark.}
\begin{itemize}
\item
For any hereditary property  $\mathcal{P}$,  assumption $(a)$ implies that $\lambda^{(1)}(\mathcal{P})\geq\lambda^{(1)}(K^{r}_{r})>0$.
\item
Consider the family of $r$-graphs
$\mathcal{R}=\{K^{r}_{r}\}\cup \{G:E(G)=\emptyset\}$. Although
$\mathcal{R}$ is hereditary, for $n>r$, we have
$\lambda^{(1)}(\mathcal{R},r)=\lambda^{(1)}(K^{r}_{r})>\lambda^{(1)}(\mathcal{R},n)=0$.  Therefore, assumption $(b)$ ensures that $\lambda^{(1)}(\mathcal{P},n)$
is increasing in $n$.
\end{itemize}

\medskip{For $G\in \mathcal{P}_{n}$ with $e(G)=ex(\mathcal{P},n)$, the $n$-vector $\mathbf{x}=(n^{-1/\alpha},\ldots,n^{-1/\alpha})$ yields}
\begin{equation}\label{zhy}
\lambda^{(\alpha)}(G)\geq P_{G}(\mathbf{x})=r!e(G)/n^{r/\alpha}=r!ex(\mathcal{P},n)/n^{r/\alpha}.
\end{equation}
Thus
$$\lambda^{(\alpha)}(\mathcal{P},n)\geq \lambda^{(\alpha)}(G) \geq r!ex(\mathcal{P},n)/n^{r/\alpha},$$
which implies
$$\frac{\lambda^{(\alpha)}(\mathcal{P},n)n^{r/\alpha}}{(n)_{r}}\geq \frac{ex(\mathcal{P},n)}{\binom{n}{r}}.$$
Taking  $n\to \infty$ and applying Lemma \ref{exi},   we obtain for $\alpha\geq1$,
\begin{equation}\label{pp}
\lambda^{(\alpha)}(\mathcal{P})\geq\hat{\pi}(\mathcal{P}).
 \end{equation}

 The following theorem establishes that equality in (\ref{pp}) necessarily holds  for all $\alpha>1$, demonstrating an asymptotic equivalence between the extremal problem  $ex(\mathcal{P},n)$ and the spectral extremal problem  $\lambda^{(\alpha)}(\mathcal{P},n)$.

\begin{lem}[\hspace{1sp}\cite{N2014A,N2014B}]\label{t1}
If $\mathcal{P}$ is a hereditary property of $r$-graphs, then for every $\alpha>1$,
$$\lambda^{(\alpha)}(\mathcal{P})=\hat{\pi}(\mathcal{P}).$$
\end{lem}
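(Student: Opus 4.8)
Since inequality \eqref{pp} already gives $\lambda^{(\alpha)}(\mathcal{P})\ge \pi(\mathcal{P})$, the whole task is the reverse bound $\lambda^{(\alpha)}(\mathcal{P})\le \pi(\mathcal{P})$. The plan is to show that, asymptotically, a concentrated eigenvector cannot beat the uniform weighting, so that the spectral extremal value is governed by the edge extremal number. Concretely, for each $n$ I would pick $G\in\mathcal{P}_n$ with $\lambda^{(\alpha)}(G)=\lambda^{(\alpha)}(\mathcal{P},n)$ together with a nonnegative principal eigenvector $\mathbf{x}$, $\|\mathbf{x}\|_\alpha=1$, so that $\lambda^{(\alpha)}(G)=r!\sum_{e\in E(G)}\prod_{i\in e}x_i$. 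Using the upper estimate in Lemma~\ref{exi}, it then suffices to prove $\lambda^{(\alpha)}(\mathcal{P},n)\le r!\,ex(\mathcal{P},n)\,n^{-r/\alpha}(1+o(1))$: feeding this into $\lambda^{(\alpha)}(\mathcal{P})\le \lambda^{(\alpha)}(\mathcal{P},n)n^{r/\alpha}/(n)_r$ and letting $n\to\infty$ returns exactly $ex(\mathcal{P},n)/\binom{n}{r}\to\pi(\mathcal{P})$.

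The engine for this upper estimate is a weighted blow-up. I would round the eigenvector to integer multiplicities, replacing vertex $i$ by an independent set of size $t_i=\lfloor M x_i/\|\mathbf{x}\|_1\rfloor$ and each edge by the corresponding complete $r$-partite gadget, obtaining an $r$-graph $B$ on $M=\sum_i t_i$ vertices with $e(B)=\sum_{e\in E(G)}\prod_{i\in e}t_i$. A direct computation gives $e(B)/\binom{M}{r}\to \lambda^{(\alpha)}(G)/\|\mathbf{x}\|_1^{\,r}$ as $M\to\infty$. The point where $\alpha>1$ enters decisively is the Hölder (power-mean) inequality $\|\mathbf{x}\|_1\le n^{1-1/\alpha}\|\mathbf{x}\|_\alpha=n^{1-1/\alpha}$, with equality only for the uniform vector; hence the density of $B$ is at least $\lambda^{(\alpha)}(G)\,n^{r/\alpha-r}(1-o(1))$. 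If $B$ could be taken inside $\mathcal{P}$, its density would be at most $ex(\mathcal{P},M)/\binom{M}{r}\to\pi(\mathcal{P})$, and the two estimates would sandwich $\lambda^{(\alpha)}(G)n^{r/\alpha-r}\le\pi(\mathcal{P})+o(1)$, completing the argument.

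The main obstacle is precisely that a hereditary $\mathcal{P}$ need not be closed under blow-ups: $B$ may fall outside $\mathcal{P}$, and for $r\ge 3$ the density of an arbitrary blow-up can strictly exceed $\pi(\mathcal{P})$ (the blow-up/Lagrangian density can dominate the Turán density). Thus the crux is to rule out eigenvector concentration for the spectral maximizer, i.e. to show that the vertices carrying non-negligible mass already induce, on their own, a near-extremal subgraph living in $\mathcal{P}$. I would exploit the strict gap in the Hölder step: for $\alpha>1$ any deviation of $\mathbf{x}$ from uniform strictly enlarges $\|\mathbf{x}\|_1^{\,r}$ relative to $n^{r-r/\alpha}$, which, combined with the eigenequation \eqref{eigen} and a supersaturation/vertex-deletion argument (deleting the vertex of least weight keeps us in $\mathcal{P}_{n-1}$), should force the heavy part of $\mathbf{x}$ to spread out and its induced subgraph—an induced subgraph of $G$, hence a member of $\mathcal{P}$—to have density at most $\pi(\mathcal{P})+o(1)$. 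The delicate regime $1<\alpha<r$, where a Perron--Frobenius eigenvector may fail to exist (as recalled just before Lemma~\ref{conti}), is where this step needs the most care; passing to the density/limit formulation of $\pi(\mathcal{P})$ is the safest way to make the near-uniformity argument rigorous.
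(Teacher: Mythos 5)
Your proposal correctly isolates the easy direction \eqref{pp} and correctly identifies the obstruction to the blow-up argument, but it does not overcome that obstruction, and this is a genuine gap rather than a technicality. The blow-up computation in your second paragraph proves the upper bound $\lambda^{(\alpha)}(\mathcal{P})\le\pi(\mathcal{P})$ only when $\mathcal{P}$ is closed under blow-ups, i.e.\ it reproves the multiplicative/flat case (Lemma \ref{flat}); for a general hereditary property the conclusion ``density of $B$ is at most $\pi(\mathcal{P})+o(1)$'' is simply false. Concretely, take $r=3$ and $\mathcal{P}=Mon(\{F_5\})$ with $F_5=\{\{1,2,3\},\{1,2,4\},\{3,4,5\}\}$: then $\pi(\mathcal{P})=2/9$, yet $K_4^{(3)}$ is $F_5$-free with $\lambda^{(1)}(K_4^{(3)})\ge 3/8$, and its blow-up does contain $F_5$; so blow-ups of members of $\mathcal{P}$ can leave $\mathcal{P}$ and have density well above $\pi(\mathcal{P})$, and no contradiction can be extracted. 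Your proposed repair does not close this gap for two reasons. First, the Hölder claim is stated backwards: $\|\mathbf{x}\|_1\le n^{1-1/\alpha}\|\mathbf{x}\|_\alpha$ with equality exactly at the uniform vector, so deviation from uniformity \emph{shrinks} $\|\mathbf{x}\|_1$, making $B$ denser --- which is perfectly consistent with $B\notin\mathcal{P}$ and yields no contradiction. Second, and more fundamentally, knowing that the ``heavy'' vertices induce a subgraph of density at most $\pi(\mathcal{P})+o(1)$ is automatic from heredity but does not bound $\lambda^{(\alpha)}(G)$: converting an edge-density bound on a (sub)graph into a bound on its $\alpha$-spectral radius is precisely the statement being proved, so this step is circular. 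The blow-up was your only mechanism for that conversion, and it is unavailable.

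For comparison, the proof this paper relies on (Nikiforov's, from which Lemmas \ref{t3} and \ref{t4} here are extracted verbatim) abandons blow-ups entirely and uses heredity only through vertex deletion. The monotone quantity $\lambda^{(\alpha)}(\mathcal{P},n)n^{r/\alpha}/(n)_r$ of Lemma \ref{exi} converges, so consecutive terms are close for infinitely many $n$ (Lemma \ref{t3}); the vertex-deletion inequality \eqref{e8} then forces the principal eigenvector of the spectral extremal graph to be nearly uniform, $x_{\textup{min}}^{\alpha}\ge\frac{1}{n}\bigl(1-O(1/\log n)\bigr)$, along that subsequence (Lemma \ref{t4}). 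The step your sketch is missing is the conversion of this near-uniformity into the bound $\lambda^{(\alpha)}(G_n)\le r!\,n^{-r/\alpha}\bigl(e(G_n)+o(n^r)\bigr)\le r!\,n^{-r/\alpha}\bigl(ex(\mathcal{P},n)+o(n^r)\bigr)$: writing $x_i^{\alpha}=\frac{1}{n}(1+z_i)$ with $z_i\ge -O(1/\log n)$ and $\sum_i z_i=0$, one applies $(1+z)^{1/\alpha}\le 1+z/\alpha$ factorwise and controls the multilinear expansion using $\sum_i \max(z_i,0)\le O(n/\log n)$; this is delicate precisely because a single entry may still satisfy $nx_i^{\alpha}\sim n/\log n$, so one cannot simply replace $\mathbf{x}$ by the uniform vector. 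Finally, the bound is only obtained along a subsequence, which suffices only because Lemma \ref{exi} guarantees the full sequence $\lambda^{(\alpha)}(\mathcal{P},n)n^{r/\alpha-r}$ converges --- another point a complete write-up must invoke explicitly.
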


We now extend the definition of graph blow-ups to hypergraphs.
\begin{defi}
Let $H$ be an $r$-graph on $n$ vertices and
 $\mathbf{t}=(k_{1},\ldots,k_{n})$ be a vector of positive integers. The blow-up of $H$ with respect to $\mathbf{t}$, denoted by $H(\mathbf{t})$, is the $r$-graph obtained by replacing each vertex
 $i\in V(H)$ with a  vertex class   $V_{i}$ of size $k_{i}$,  and if $\{i_{1},\ldots,i_{r}\}\in E(H)$, then $\{i_{1,j_{1}},\ldots,i_{r,j_{r}}\} \in E(H(\mathbf{t}))$ for every $i_{1,j_{1}}\in V_{i_{1}},\ldots,i_{r,j_{r}}\in V_{i_{r}}$.
 \end{defi}

Using the concept of blow-up, we define an important graph property.
 \begin{defi}[Multiplicative property]
 A graph property $\mathcal{M}$ is   \emph{multiplicative} if $G\in \mathcal{M}_{n}$  implies that $G(\mathbf{t})\in \mathcal{M}$ for every positive vector $\mathbf{t}\in \mathbb{Z}^{n}$ (equivalently, $\mathcal{M}$  is closed under blow-up).
 \end{defi}

 \begin{defi}[Flat property]
 A hereditary property $\mathcal{P}$  is  \emph{flat} if $\lambda^{(1)}(\mathcal{P})=\hat{\pi}(\mathcal{P})>0$ (where $\hat{\pi}(\mathcal{P})>0$ follows from the Remark after Lemma \ref{exi}).
\end{defi}

 \begin{lem}[\hspace{1sp}\cite{N2014A,N2014B}]\label{flat}
If $\mathcal{P}$ is a hereditary, multiplicative property, then it is flat; that is to say,  $\lambda^{(\alpha)}(\mathcal{P})=\hat{\pi}(\mathcal{P})$ for every $\alpha\geq 1$.
\end{lem}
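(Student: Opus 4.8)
The plan is to observe that for $\alpha>1$ the asserted equality is already Lemma \ref{t1}, so the entire content of the statement collapses to the endpoint $\alpha=1$, which is exactly where multiplicativity must be used. Since \eqref{pp} already gives $\lambda^{(1)}(\mathcal{P})\geq\pi(\mathcal{P})$, only the reverse bound $\lambda^{(1)}(\mathcal{P})\leq\pi(\mathcal{P})$ remains. At $\alpha=1$ the normalizing exponent $r/\alpha-r$ in Lemma \ref{exi} vanishes, so $\lambda^{(1)}(\mathcal{P})=\lim_{n}\lambda^{(1)}(\mathcal{P},n)$ is simply the supremum of the increasing sequence $\lambda^{(1)}(\mathcal{P},n)=\max_{G\in\mathcal{P}_{n}}\lambda^{(1)}(G)$. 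Hence it suffices to prove $\lambda^{(1)}(G)\leq\pi(\mathcal{P})$ for each fixed $G\in\mathcal{P}$.

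Fix such a $G$ on $n$ vertices and choose a nonnegative principal eigenvector $\mathbf{x}$ with $\|\mathbf{x}\|_{1}=1$, so that $\lambda^{(1)}(G)=P_{G}(\mathbf{x})$. Let $S=\{i:x_{i}>0\}$ be its support. Because $\mathcal{P}$ is hereditary, the induced subgraph $G[S]$ lies in $\mathcal{P}$, and since vertices of weight zero contribute nothing to the Lagrangian polynomial we have $P_{G[S]}(\mathbf{x}|_{S})=P_{G}(\mathbf{x})$ with $\mathbf{x}|_{S}$ strictly positive. I would then convert this weighting into genuine vertices by a blow-up: for a large integer $M$ put $k_{i}=\lfloor Mx_{i}\rfloor\geq 1$ for $i\in S$, set $\mathbf{t}=(k_{i})_{i\in S}$ and $N=\sum_{i\in S}k_{i}$. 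Multiplicativity guarantees $G[S](\mathbf{t})\in\mathcal{P}_{N}$, and the edge count of a blow-up is exactly $e(G[S](\mathbf{t}))=\sum_{e\in E(G[S])}\prod_{i\in e}k_{i}$.

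The core of the argument is an averaging limit. As $M\to\infty$ the rounding costs at most $|S|$ from $N$, so $N/M\to 1$ and $k_{i}/N\to x_{i}$, while $\binom{N}{r}=\frac{N^{r}}{r!}(1+o(1))$; therefore the edge density of the blow-up satisfies $e(G[S](\mathbf{t}))/\binom{N}{r}\to r!\sum_{e\in E(G[S])}\prod_{i\in e}x_{i}=P_{G}(\mathbf{x})$. On the other hand $G[S](\mathbf{t})\in\mathcal{P}_{N}$ forces $e(G[S](\mathbf{t}))\leq ex(\mathcal{P},N)$, whence $e(G[S](\mathbf{t}))/\binom{N}{r}\leq ex(\mathcal{P},N)/\binom{N}{r}$, and the right-hand side tends to $\pi(\mathcal{P})$ by \eqref{eltt}. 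Letting $M\to\infty$ (so $N\to\infty$) and comparing the two limits gives $P_{G}(\mathbf{x})\leq\pi(\mathcal{P})$, i.e.\ $\lambda^{(1)}(G)\leq\pi(\mathcal{P})$. Maximizing over $G\in\mathcal{P}_{n}$ and sending $n\to\infty$ yields $\lambda^{(1)}(\mathcal{P})\leq\pi(\mathcal{P})$, which with \eqref{pp} gives $\lambda^{(1)}(\mathcal{P})=\pi(\mathcal{P})$. Positivity $\pi(\mathcal{P})=\lambda^{(1)}(\mathcal{P})\geq\lambda^{(1)}(K^{r}_{r})>0$ then follows from the Remark after Lemma \ref{exi}, so $\mathcal{P}$ is flat, and together with Lemma \ref{t1} for $\alpha>1$ we conclude $\lambda^{(\alpha)}(\mathcal{P})=\pi(\mathcal{P})$ for all $\alpha\geq 1$.

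The one delicate point, and the step I expect to be the main obstacle, is precisely this blow-up averaging: one must first pass to the positive support (multiplicativity applies only to strictly positive vectors $\mathbf{t}$, so empty blow-up classes are not permitted) and then control the rounding errors well enough to justify $k_{i}/N\to x_{i}$ and the convergence of the edge density to $P_{G}(\mathbf{x})$. Everything else is bookkeeping resting on Lemmas \ref{exi} and \ref{t1} together with inequality \eqref{pp}.
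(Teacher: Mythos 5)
Your proposal is correct, and in fact it supplies something the paper itself does not: Lemma~\ref{flat} is quoted from Nikiforov \cite{N2014A,N2014B} without proof (the Appendix only reproves Lemma~\ref{exi}, Theorem~\ref{t8}, and Fact~A), so there is no internal argument to compare against. Your reduction is the right one — for $\alpha>1$ the statement is literally Lemma~\ref{t1}, so the only content is the endpoint $\alpha=1$, where by Lemma~\ref{exi} one needs $\lambda^{(1)}(G)\le\pi(\mathcal{P})$ for each fixed $G\in\mathcal{P}$ — and your blow-up averaging is exactly the classical Lagrangian-versus-density argument that Nikiforov's original proof rests on: restrict to the support $S$ of the maximizer (which you correctly flag as necessary, since the paper's definition of multiplicativity only admits strictly positive integer vectors $\mathbf{t}$), blow up with $k_i=\lfloor Mx_i\rfloor$, use heredity for $G[S]\in\mathcal{P}$ and multiplicativity for $G[S](\mathbf{t})\in\mathcal{P}_N$, and compare $e(G[S](\mathbf{t}))/\binom{N}{r}\to P_G(\mathbf{x})$ with $ex(\mathcal{P},N)/\binom{N}{r}\to\pi(\mathcal{P})$. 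The rounding estimates ($N=M+O(|S|)$, $k_i/N\to x_i$) are routine for fixed $G$ and $\mathbf{x}$, and the final appeal to the Remark after Lemma~\ref{exi} for $\pi(\mathcal{P})>0$ correctly discharges the positivity clause in the definition of flatness. I see no gap.
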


 The flat property yields significant  advantages in extremal hypergraph  problems, as shown by the following results.

\begin{lem}[\hspace{1sp}\cite{N2014A,N2014B}]\label{ttt6}
 If $\mathcal{P}$ is a flat property of $r$-graphs, and $G\in \mathcal{P}$, then for every $\alpha\geq1$,
$$\lambda^{(\alpha)}(G)\leq\hat{\pi}(\mathcal{P})^{1/\alpha}(r!e(G))^{1-1/\alpha}.$$
\end{lem}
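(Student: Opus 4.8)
The plan is to read the claimed inequality as a logarithmic interpolation between the two endpoints $\alpha=1$ and $\alpha=\infty$ of the family $\lambda^{(\alpha)}(G)$. At the lower endpoint, flatness together with the monotonicity of $\lambda^{(1)}(\mathcal{P},n)$ gives $\lambda^{(1)}(G)\le\lambda^{(1)}(\mathcal{P})=\pi(\mathcal{P})$; at the upper endpoint, Lemma~\ref{conti} gives $\lim_{\alpha\to\infty}\lambda^{(\alpha)}(G)=r!e(G)$. The target bound $\lambda^{(\alpha)}(G)\le\pi(\mathcal{P})^{1/\alpha}(r!e(G))^{1-1/\alpha}$ is precisely what a log-convexity estimate with weights $1/\alpha$ and $1-1/\alpha$ predicts, so the natural engine is H\"older's inequality applied directly to the Lagrangian polynomial.

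First I would fix $G\in\mathcal{P}_n$ and take a \emph{nonnegative} maximizer $\mathbf{x}$ with $\|\mathbf{x}\|_\alpha=1$ and $\lambda^{(\alpha)}(G)=P_G(\mathbf{x})$; such a vector exists since $G$ admits a principal eigenvector, and in any case replacing each coordinate by its absolute value does not decrease $P_G$ nor change $\|\mathbf{x}\|_\alpha$. Writing $z_e:=\prod_{i\in e}x_i\ge 0$ for each edge $e$, we have $\lambda^{(\alpha)}(G)=r!\sum_{e\in E(G)}z_e$. Applying H\"older's inequality to $\sum_e z_e=\sum_e z_e\cdot 1$ with conjugate exponents $\alpha$ and $\alpha/(\alpha-1)$ yields
\begin{equation*}
\sum_{e\in E(G)}z_e\le\Bigl(\sum_{e\in E(G)}z_e^{\alpha}\Bigr)^{1/\alpha}e(G)^{1-1/\alpha}.
\end{equation*}

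The crux is to convert $\sum_e z_e^{\alpha}$ back into a genuine $\ell^1$-Lagrangian value so that flatness can be invoked. For this I would use the \emph{power substitution} $\mathbf{y}=(x_1^{\alpha},\ldots,x_n^{\alpha})$, which satisfies $\|\mathbf{y}\|_1=\sum_i x_i^{\alpha}=\|\mathbf{x}\|_\alpha^{\alpha}=1$, and compute $P_G(\mathbf{y})=r!\sum_e\prod_{i\in e}x_i^{\alpha}=r!\sum_e z_e^{\alpha}$. By the definition of $\lambda^{(1)}(G)$ as a maximum over the unit $\ell^1$-sphere, $r!\sum_e z_e^{\alpha}=P_G(\mathbf{y})\le\lambda^{(1)}(G)$. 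Flatness of $\mathcal{P}$ then gives $\lambda^{(1)}(G)\le\lambda^{(1)}(\mathcal{P},n)\le\lambda^{(1)}(\mathcal{P})=\pi(\mathcal{P})$, where the second inequality uses that $\lambda^{(1)}(\mathcal{P},n)$ is increasing (the Remark after Lemma~\ref{exi}). Substituting $\sum_e z_e^{\alpha}\le\pi(\mathcal{P})/r!$ into the H\"older bound and multiplying through by $r!$ collapses the constants into exactly $\pi(\mathcal{P})^{1/\alpha}(r!e(G))^{1-1/\alpha}$.

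I expect the main obstacle to be less computational than conceptual: recognizing the right maneuver. The power substitution $y_i=x_i^{\alpha}$ is what links the $\alpha$-norm constraint to the $\ell^1$-quantity $\lambda^{(1)}(G)$, and without it H\"older alone would only relate $\lambda^{(\alpha)}(G)$ to $e(G)$. Two minor points need care. The case $\alpha=1$ is degenerate for H\"older, since the conjugate exponent is infinite, but there the claim reduces to $\lambda^{(1)}(G)\le\pi(\mathcal{P})$, which is immediate from flatness. And one should note that no equality analysis is required, since only the inequality is asserted, so the argument is complete once the two endpoint estimates and the single H\"older step are assembled.
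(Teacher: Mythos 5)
Your proof is correct, and it is essentially the standard argument for this lemma (which the paper itself does not reprove but cites from Nikiforov): H\"older's inequality over the edge sums with exponents $\alpha$ and $\alpha/(\alpha-1)$, followed by the power substitution $y_i=x_i^{\alpha}$ to bound $r!\sum_e z_e^{\alpha}\le\lambda^{(1)}(G)\le\lambda^{(1)}(\mathcal{P})=\pi(\mathcal{P})$ via monotonicity of $\lambda^{(1)}(\mathcal{P},n)$ and flatness. The constants assemble exactly to $\pi(\mathcal{P})^{1/\alpha}(r!e(G))^{1-1/\alpha}$, and your handling of the degenerate case $\alpha=1$ is also fine.
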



 \begin{thm}[\hspace{1sp}\cite{N2014A,N2014B}]\label{t8}
If $\mathcal{P}$ is a flat property of $r$-graphs, and $G\in \mathcal{P}_{n}$, then
\begin{equation*}
e(G)\leq\hat{\pi}(\mathcal{P})n^{r}/r!,
\end{equation*}
and for every $\alpha\geq1$,
\begin{equation*}
\lambda^{(\alpha)}(G)\leq\hat{\pi}(\mathcal{P})n^{r-r/\alpha}.
\end{equation*}
\end{thm}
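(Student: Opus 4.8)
The plan is to establish the edge bound first, using only the case $\alpha=1$, and then to bootstrap it to the full $\alpha$-spectral estimate by feeding it into Lemma~\ref{ttt6}. Both halves are short once the supporting lemmas are invoked, so the real content is getting the exponent bookkeeping right and pinning down the one place where flatness is genuinely used.

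For the edge bound I would test the Lagrangian polynomial against the uniform vector $\mathbf{x}=(1/n,\ldots,1/n)$, which satisfies $\|\mathbf{x}\|_{1}=1$. Then
$$P_{G}(\mathbf{x})=r!\sum_{e\in E(G)}\frac{1}{n^{r}}=\frac{r!\,e(G)}{n^{r}},$$
so by the definition of $\lambda^{(1)}(G)$ as the maximum of $P_{G}$ over the $\ell_{1}$-unit sphere we get $\lambda^{(1)}(G)\ge r!\,e(G)/n^{r}$. On the other hand, since $G\in\mathcal{P}_{n}$ we have $\lambda^{(1)}(G)\le\lambda^{(1)}(\mathcal{P},n)$, and Lemma~\ref{exi} together with the flatness hypothesis gives $\lambda^{(1)}(\mathcal{P},n)\le\lambda^{(1)}(\mathcal{P})=\pi(\mathcal{P})$. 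Chaining these inequalities yields $r!\,e(G)/n^{r}\le\pi(\mathcal{P})$, i.e.\ $e(G)\le\pi(\mathcal{P})n^{r}/r!$.

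For the spectral bound I would invoke Lemma~\ref{ttt6}, which for a flat $\mathcal{P}$ and $G\in\mathcal{P}$ asserts that $\lambda^{(\alpha)}(G)\le\pi(\mathcal{P})^{1/\alpha}(r!\,e(G))^{1-1/\alpha}$ for every $\alpha\ge1$. Substituting the edge bound just proved in the form $r!\,e(G)\le\pi(\mathcal{P})n^{r}$ and collecting powers of $\pi(\mathcal{P})$ and $n$ gives
$$\lambda^{(\alpha)}(G)\le\pi(\mathcal{P})^{1/\alpha}\bigl(\pi(\mathcal{P})\,n^{r}\bigr)^{1-1/\alpha}=\pi(\mathcal{P})\,n^{\,r-r/\alpha},$$
which is exactly the claimed estimate; the case $\alpha=1$ reduces to $\lambda^{(1)}(G)\le\pi(\mathcal{P})$, consistent with the first step.

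I do not expect a serious obstacle here, as the whole argument is a two-step computation resting on Lemmas~\ref{exi} and~\ref{ttt6}. The only point demanding care is that the inequality $\lambda^{(1)}(G)\le\pi(\mathcal{P})$ must hold \emph{exactly}, not merely asymptotically: this relies on the monotonicity of $\lambda^{(1)}(\mathcal{P},n)$ in $n$, which is precisely why assumption~(b) is imposed (see the Remark following Lemma~\ref{exi}), combined with the definition of flatness $\lambda^{(1)}(\mathcal{P})=\pi(\mathcal{P})$. One should also keep the exponents straight, noting that the factors $\pi(\mathcal{P})^{1/\alpha}$ and $\pi(\mathcal{P})^{1-1/\alpha}$ multiply to $\pi(\mathcal{P})$ and that $r(1-1/\alpha)=r-r/\alpha$.
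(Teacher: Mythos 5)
Your proof is correct, but it runs in the opposite direction from the paper's. The paper proves the spectral bound \emph{first} and directly: it takes a principal eigenvector $\mathbf{x}$ of $\lambda^{(\alpha)}(G)$, uses the degree-$r$ homogeneity of $P_{G}$ to write $\lambda^{(\alpha)}(G)/n^{r-r/\alpha}=P_{G}\big(\mathbf{x}/n^{1-1/\alpha}\big)$, observes via the Power-Mean inequality that the rescaled vector has $\ell_{1}$-norm at most $1$, and concludes $\lambda^{(\alpha)}(G)/n^{r-r/\alpha}\leq\lambda^{(1)}(G)\leq\lambda^{(1)}(\mathcal{P},n)\leq\lambda^{(1)}(\mathcal{P})=\pi(\mathcal{P})$; the edge bound then falls out from $\lambda^{(\alpha)}(G)\geq r!\,e(G)/n^{r/\alpha}$. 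You instead prove the edge bound first (the uniform-vector test plus flatness, entirely at $\alpha=1$) and then bootstrap to general $\alpha$ through Lemma~\ref{ttt6}; your exponent bookkeeping is right, and both routes use flatness through the same chain $\lambda^{(1)}(G)\leq\lambda^{(1)}(\mathcal{P},n)\leq\lambda^{(1)}(\mathcal{P})=\pi(\mathcal{P})$. The trade-off is this: your argument is shorter and more modular, but it delegates all the analytic content for $\alpha>1$ to Lemma~\ref{ttt6}, which the paper cites from Nikiforov without proof, whereas the paper's appendix proof is deliberately self-contained (Power-Mean plus Lemma~\ref{exi} only). That self-containedness is the point of the appendix: the Remark after Theorem~\ref{t8} explains that Nikiforov's original proof of this statement assumed the stronger hypothesis hereditary-and-multiplicative rather than flatness alone, so the authors wanted an argument verified under flatness only. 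To make your version equally airtight under mere flatness, you should note that Lemma~\ref{ttt6} itself follows from H\"older's inequality together with the bound $\lambda^{(1)}(G)\leq\pi(\mathcal{P})$ (and in particular does not secretly rely on Theorem~\ref{t8} or on multiplicativity), so no circularity enters.
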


\noindent {\bf Remark.} It is worth mentioning that Theorem \ref{t8} first appeared in \cite[Theorem $16$]{N2014A}. However, its proof  is based on a stronger condition that $\mathcal{P}$ is both hereditary and multiplicative property  (see \cite[p. $27$]{N2014A}).  We therefore provide a complete proof in Appendix.

\section{Asymptotic regularity of spectral extremal hypergraphs}

In this section, we establish a  notable property of spectral extremal hypergraphs  in pattern-colorable $r$-graphs. Specifically, given an $r$-pattern $P$,  if $G$ is a $P$-colorable $r$-graph of order $n$ attaining the maximum $\alpha$-spectral radius for $\alpha>1$,  then $G$ becomes almost regular as $n\to \infty$.
To prove this,   we  first consider a broader family of $r$-graphs with hereditary property.

\subsection{Spectral extremal hypergraphs  of hereditary property}

 Nikiforov derived the following result  through an ingenious and concise method (see Claim A in the proof of \cite[Theorem $12$]{N2014A}).

\begin{lem}[\hspace{1sp}\cite{N2014A}]\label{t3}
Let $\mathcal{P}$ be a hereditary property of $r$-graphs with $\hat{\pi}(\mathcal{P})>0$. If $\alpha>1$ and $\lambda_{n}=
\lambda^{(\alpha)}(\mathcal{P},n)$, then there exist infinitely many $n$ such that
$$\frac{\lambda_{n-1}(n-1)^{r/\alpha-1}}{(n-2)_{r-1}}-\frac{\lambda_{n}n^{r/\alpha-1}}{(n-1)_{r-1}}<\frac{1}{n\log n}\cdot\frac{\lambda_{n}n^{r/\alpha-1}}{(n-1)_{r-1}}.$$
\end{lem}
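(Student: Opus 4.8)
The plan is to reduce the claimed inequality to the divergence of a classical borderline series. First I would introduce the normalized quantity
$$a_n := \frac{\lambda_n\, n^{r/\alpha-1}}{(n-1)_{r-1}},$$
so that the left-hand side of the target inequality is exactly $a_{n-1}-a_n$ (obtained by taking $m=n-1$ in $a_m$, which gives $\frac{\lambda_{n-1}(n-1)^{r/\alpha-1}}{(n-2)_{r-1}}$), while the right-hand side is $\frac{1}{n\log n}\,a_n$. Thus the assertion becomes: there are infinitely many $n$ with $a_{n-1}-a_n < \frac{a_n}{n\log n}$. The first observation is that $a_n$ converges to a positive limit. Writing $a_n = \bigl(\lambda_n n^{r/\alpha-r}\bigr)\cdot \frac{n^{r-1}}{(n-1)_{r-1}}$ and noting $\frac{n^{r-1}}{(n-1)_{r-1}}\to 1$, Lemma~\ref{exi} gives $\lambda_n n^{r/\alpha-r}\to \lambda^{(\alpha)}(\mathcal{P})$, and by Lemma~\ref{t1} (using $\alpha>1$) this limit equals $\pi(\mathcal{P})=:L>0$. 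Hence $a_n\to L>0$.

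Next I would argue by contradiction. Suppose the displayed inequality holds for only finitely many $n$; then there is $N$ such that $a_{n-1}-a_n \ge \frac{a_n}{n\log n}$ for every $n\ge N$. Since $a_n\to L>0$, after possibly enlarging $N$ we may assume $a_n \ge L/2$ for all $n\ge N$, whence
$$a_{n-1}-a_n \ge \frac{a_n}{n\log n}\ge \frac{L}{2\,n\log n},\qquad n\ge N.$$
Summing this from $n=N$ to $M$, the left-hand side telescopes:
$$a_{N-1}-a_M \ge \frac{L}{2}\sum_{n=N}^{M}\frac{1}{n\log n}.$$
As $M\to\infty$ the left side tends to the finite number $a_{N-1}-L$, while the right side diverges because $\sum_n \frac{1}{n\log n}=\infty$ (integral test: $\int \frac{dx}{x\log x}=\log\log x$). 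This contradiction proves the claim.

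The one genuinely delicate point — and the reason the precise weight $\frac{1}{n\log n}$ appears in the statement — is that $\frac{1}{n\log n}$ sits exactly at the boundary between convergent and divergent series. The entire argument hinges on $\sum \frac{1}{n\log n}=\infty$: a faster-decaying weight such as $\frac{1}{n(\log n)^2}$ or $\frac{1}{n^2}$ would yield a convergent series, and the telescoping contradiction would collapse. I therefore expect the main (though modest) obstacle to be simply recognizing that this borderline-divergent harmonic-type series is what forces the conclusion; the remaining ingredients — the convergence $a_n\to L>0$ and the telescoping estimate — are routine once the sequence $a_n$ is set up correctly.
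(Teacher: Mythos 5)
Your proof is correct. Note that the paper itself contains no proof of Lemma \ref{t3}: it is quoted from Claim A in the proof of Theorem 12 of \cite{N2014A}, and your argument is essentially that one --- Nikiforov likewise works with the normalized sequence $a_n=\lambda_n n^{r/\alpha}/(n)_r$ (identical to yours, since $(n)_r=n\,(n-1)_{r-1}$), uses $a_n\to\pi(\mathcal{P})>0$ via Lemmas \ref{exi} and \ref{t1}, and derives the contradiction from the borderline divergence of $\sum 1/(n\log n)$; the only cosmetic difference is that he multiplies the ratio inequalities $a_{n-1}/a_n\ge 1+\tfrac{1}{n\log n}$ to force $a_N$ to be unbounded, whereas you telescope the differences. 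A small bonus you could have invoked: $a_n$ is exactly the sequence shown to be nonincreasing in the paper's Appendix proof of Lemma \ref{exi}, so its convergence (indeed monotone) to $\pi(\mathcal{P})$ is already on record there.
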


Let $G\in \mathcal{P}_{n}$ be an $r$-graph  satisfying $\lambda^{(\alpha)}(G)=\lambda^{(\alpha)}(\mathcal{P},n)$.  Applying  Lemma \ref{t3}, Nikiforov \cite{N2014A} showed that for $1<\alpha\leq r$ and $n$ sufficiently large, there exist infinitely many $n$ such that
$$(x_{\textup{min}}(G))^{\alpha}\geq\frac{1}{n}\Big(1-\frac{1}{(\alpha-1)\log n}\Big).$$
However, a similar argument extends this result to  any $\alpha>1$. We require the following elementary fact before presenting Lemma \ref{t4}.\\

\noindent {\bf Fact 1.} If $\alpha>1$ and $r\geq2$, then the function
$$f(x)=\frac{1-rx}{(1-x)^{r/\alpha}}$$
is decreasing for $0\leq x<1$.

\begin{proof}
Computing the derivative of  $f(x)$, we obtain
\begin{displaymath}
\begin{split}
f'(x)&=\frac{-r(1-x)^{r/\alpha}+(r/\alpha)(1-rx)(1-x)^{r/\alpha-1}}{(1-x)^{2r/\alpha}}\\
&=\frac{r(1-x)^{r/\alpha-1}}{\alpha(1-x)^{2r/\alpha}}\big(-(\alpha-1)-(r-\alpha)x\big).
\end{split}
\end{displaymath}
If $1<\alpha\leq r$, then $f'(x)<0$ is immediate. For  $\alpha>r$, observe that
\begin{displaymath}
\begin{split}
-(\alpha-1)-(r-\alpha)x<-(\alpha-1)-(r-\alpha)=1-r<0
\end{split}
\end{displaymath}
since $x<1$ and $r\geq2$. Thus,  $f'(x)<0$ holds in all cases.
\end{proof}

For an $r$-graph $H$ and a vertex $v\in V(H)$, let  $E_{H}(v)$  denote the set of edges in $H$ containing $v$, and let $d_{H}(v)=|E_{H}(v)|$ be the degree of $v$.  The minimum degree of $H$ is denoted by
$\delta(H)$.
For a vertex subset $U\subseteq V(H)$, we write $x_{U}=\Pi_{v\in U}x_{v}$.

\begin{lem}\label{t4}
Let $\mathcal{P}$ be a hereditary  property of $r$-graphs with $\hat{\pi}(\mathcal{P})>0$. Suppose that $G_{n}\in \mathcal{P}_{n}$ is an $r$-graph satisfying $\lambda^{(\alpha)}(G_{n})=\lambda^{(\alpha)}(\mathcal{P},n)$ for $\alpha>1$.
Then there exist infinitely many $n$ such that
$$(x_{\textup{min}}(G_n))^{\alpha}\geq\frac{1}{n}\Big(1-\frac{\alpha}{(\alpha-1)r\log n}\Big).$$
\end{lem}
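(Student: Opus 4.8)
The plan is to follow the template that Nikiforov established for the range $1<\alpha\leq r$, but to carry out the estimates so that the constant $r$ appears cleanly and Fact 1 can be invoked to cover every $\alpha>1$ uniformly. First I would fix an $r$-graph $G_n\in\mathcal{P}_n$ attaining $\lambda^{(\alpha)}(G_n)=\lambda_n$, let $\mathbf{x}$ be a principal eigenvector, and let $v$ be a vertex realizing $x_v=\mathbf{x}_{\textup{min}}$, so that $x_v=x_{\textup{min}}(G_n)$. The basic comparison is between $G_n$ and the induced subgraph $G_n-v$, which lies in $\mathcal{P}_{n-1}$ by heredity and therefore satisfies $\lambda^{(\alpha)}(G_n-v)\leq\lambda_{n-1}$. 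Deleting $v$ removes exactly the edges of $E_{G_n}(v)$, so using the eigenvector equation \eqref{eigen} one computes how much of the Lagrangian weight is carried by $v$: restricting $\mathbf{x}$ to $V(G_n)\setminus\{v\}$ and renormalizing in the $\alpha$-norm gives a test vector showing that $\lambda^{(\alpha)}(G_n-v)$ is at least $(\lambda_n-(\text{contribution of }v))/(1-x_v^{\alpha})^{\,?}$ after the appropriate renormalization by $(1-x_v^{\alpha})^{r/\alpha}$.

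The second step is to make the contribution of $v$ explicit. From \eqref{eigen}, the quantity $r\,(\text{Lagrangian weight through }v)$ equals $r\,x_v\cdot(r-1)!\sum_{\{v,i_2,\dots,i_r\}\in E}x_{i_2}\cdots x_{i_r}=r\,x_v\cdot\lambda_n x_v^{\alpha-1}=r\lambda_n x_v^{\alpha}$, because $v$ lies in each of its incident edges and each such edge contributes $r!\,x_v\,x_{i_2}\cdots x_{i_r}$ to $P_{G_n}(\mathbf{x})$. Hence removing $v$ and renormalizing yields the inequality
\begin{equation*}
\lambda_{n-1}\geq\lambda^{(\alpha)}(G_n-v)\geq\frac{\lambda_n(1-r\,x_v^{\alpha})}{(1-x_v^{\alpha})^{r/\alpha}}=\lambda_n\, f(x_v^{\alpha}),
\end{equation*}
where $f$ is exactly the function of Fact 1 evaluated at $t=x_v^{\alpha}\in[0,1)$. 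Here I would be careful to justify that the restricted-and-renormalized vector is admissible, that the removed Lagrangian contribution is precisely $r\lambda_n x_v^{\alpha}$ (each edge at $v$ is counted once, but $v$ appears once per edge), and that the remaining edges of $G_n-v$ carry at least $P_{G_n}(\mathbf{x})-r\lambda_n x_v^{\alpha}$ after division by the normalization factor $(1-x_v^{\alpha})^{r/\alpha}$.

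The third step is the asymptotic extraction. Since $f$ is decreasing on $[0,1)$ by Fact 1, the inequality $\lambda_{n-1}\geq\lambda_n f(x_v^{\alpha})$ gives a lower bound on $x_v^{\alpha}$ once we control the ratio $\lambda_{n-1}/\lambda_n$ from above; this is exactly what Lemma~\ref{t3} supplies along an infinite subsequence of $n$. Translating the ratio bound of Lemma~\ref{t3} into a bound on $f(x_v^{\alpha})$, and then inverting $f$ near $t=1/n$ via the approximation $f(1/n)=\frac{1-r/n}{(1-1/n)^{r/\alpha}}\approx 1-\frac{(\alpha-1)r}{\alpha n}+o(1/n)$, produces the stated estimate $x_v^{\alpha}\geq\frac{1}{n}\bigl(1-\frac{\alpha}{(\alpha-1)r\log n}\bigr)$ for infinitely many $n$. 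The main obstacle I anticipate is the bookkeeping in this final inversion: one must linearize $f$ around $t\sim 1/n$ and show the $1/(n\log n)$ slack coming from Lemma~\ref{t3} transfers to the precise coefficient $\frac{\alpha}{(\alpha-1)r}$ in front of $1/\log n$, rather than to some unwanted constant. Using Fact 1 to guarantee monotonicity across the whole range $\alpha>1$ is what lets the identical computation run for $\alpha>r$, which is the point where Nikiforov's original argument stopped.
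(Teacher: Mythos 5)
Your strategy is structurally identical to the paper's own proof: delete the vertex $v$ carrying the minimum weight, use the eigenequation \eqref{eigen} to compute the removed Lagrangian weight as exactly $r\lambda_n x_v^{\alpha}$, renormalize by $(1-x_v^{\alpha})^{r/\alpha}$ to obtain $\lambda_{n-1}/\lambda_n \ge (1-rx_v^{\alpha})/(1-x_v^{\alpha})^{r/\alpha}$ (this is the paper's inequality \eqref{e8}), and then play this against the ratio upper bound of Lemma~\ref{t3} using the monotonicity of $f$ from Fact~1. Your direct ``invert $f$'' framing and the paper's proof by contradiction are logically interchangeable. One small imprecision: $x_{\textup{min}}(G_n)$ is an infimum over \emph{all} principal eigenvectors, so you cannot simply declare $x_v = x_{\textup{min}}(G_n)$ for one chosen eigenvector; in your framing this is harmless because the lower bound you would derive applies uniformly to every principal eigenvector (or one can note the infimum is attained by compactness), but it needs to be said.

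The genuine problem is in your third step, which is where the entire content of the lemma sits: the approximation you propose there cannot decide the inequality. You plan to invert $f$ near $t=1/n$ using $f(1/n)= 1-\frac{r(\alpha-1)}{\alpha n}+o(1/n)$. But the coefficient $\frac{\alpha}{(\alpha-1)r}$ in the statement is exactly critical: with $t_0=\frac{1}{n}\big(1-\frac{\alpha}{(\alpha-1)r\log n}\big)$ and $|f'(0)|=\frac{r(\alpha-1)}{\alpha}$, the first-order gain $f(t_0)-f(1/n)\approx |f'(0)|\,(1/n-t_0)=\frac{1}{n\log n}$ exactly matches the slack $f(1/n)\cdot\frac{1}{n\log n}$ coming from Lemma~\ref{t3}, so at first order the inequality you need, namely $f(t_0)\ge \frac{n^{r/\alpha-1}(n-r)}{(n-1)^{r/\alpha}}\big(1+\frac{1}{n\log n}\big)$, holds with equality. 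The conclusion is decided only at the next order: in the paper's computation, after these cancellations one is left comparing $\frac{2(\alpha-1)}{n(n-2)}$ with $\frac{1}{n(n-2)\log n}$ (whence the contradiction $2(\alpha-1)\le 1/\log n$), i.e.\ terms of size $\Theta(1/n^{2})$ against $\Theta(1/(n^{2}\log n))$. An expansion carrying an $o(1/n)$ error term --- which is what you write --- cannot resolve anything at the $1/(n\log n)$ scale, let alone at $1/n^{2}$. So this is not mere bookkeeping in which the slack ``transfers to the precise coefficient'': at that coefficient the first-order terms cancel identically, and you must expand both $f(t_0)$ and the Lemma~\ref{t3} bound to second order with explicit, signed control of the $O(1/n^{2})$ terms, as the paper's displayed chain of inequalities does. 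The specific linearization you propose is insufficient to produce the stated estimate.
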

\begin{proof}
Assume, for contradiction,  that there exists $n_{0}$ such that for all $n>n_{0}$,
$$(x_{\textup{min}}(G_n))^{\alpha}<\frac{1}{n}\Big(1-\frac{\alpha}{(\alpha-1)r\log n}\Big).$$

Set  $\lambda_{n}:=\lambda^{(\alpha)}(\mathcal{P},n)$. By Lemma \ref{t3}, we can select $n>n_{0}$ sufficiently large so that
$$\frac{\lambda_{n-1}(n-1)^{r/\alpha-1}}{(n-2)_{r-1}}-\frac{\lambda_{n}n^{r/\alpha-1}}{(n-1)_{r-1}}<\frac{1}{n\log n}\cdot\frac{\lambda_{n}n^{r/\alpha-1}}{(n-1)_{r-1}},$$
which implies
\begin{equation}\label{e7}
\frac{\lambda_{n-1}}{\lambda_{n}}<\frac{n^{r/\alpha-1}(n-r)}{(n-1)^{r/\alpha}}\Big(1+\frac{1}{n\log n}\Big).
\end{equation}
By the definition of $x_{\textup{min}}(G_n)$,  there exists
   a principal eigenvector $\mathbf{x}$ of $G_n$ such that
   \begin{equation}\label{e6}
(\mathbf{x}_{\textup{min}})^{\alpha}<\frac{1}{n}\Big(1-\frac{\alpha}{(\alpha-1)r\log n}\Big).
\end{equation}
Let $k\in V(G_{n})$ be a vertex with $x_{k}=\mathbf{x}_{\textup{min}}$, and let $\mathbf{x}'$ be the $(n-1)$-vector obtained from $\mathbf{x}$ by removing the component $x_{k}$. For the subgraph
$G_{n}-k$, we have
$$P_{G_{n}-k}(\mathbf{x}')=\lambda^{(\alpha)}(G_{n})-r!x_{k}\sum_{e\in E_{G_{n}}(k)}x_{e \backslash\{k\}}=
\lambda_{n}-r\lambda_{n}x_{k}^{\alpha}.$$
Since $\mathcal{P}$ is hereditary, $G_{n}-k\in \mathcal{P}_{n-1}$. Therefore,
$$\lambda_{n}(1-rx_{k}^{\alpha})=P_{G_{n}-k}(\mathbf{x}')\leq \lambda^{(\alpha)}(G_{n}-k)(\|\mathbf{x}' \|_{\alpha})^{r}
\leq \lambda_{n-1}(1-x_{k}^{\alpha})^{r/\alpha},$$
or equivalently,
\begin{equation}\label{e8}
\frac{\lambda_{n-1}}{\lambda_{n}}\geq \frac{1-rx_{k}^{\alpha}}{(1-x_{k}^{\alpha})^{r/\alpha}}.
\end{equation}
Combining (\ref{e7}) and (\ref{e8}) yields
$$\frac{1-rx_{k}^{\alpha}}{(1-x_{k}^{\alpha})^{r/\alpha}}\leq \frac{\lambda_{n-1}}{\lambda_{n}}
\leq \frac{n^{r/\alpha-1}(n-r)}{(n-1)^{r/\alpha}}\Big(1+\frac{1}{n\log n}\Big).$$
Applying Fact $1$ and incorporating (\ref{e6}), we derive
$$\frac{1-\frac{r}{n}(1-\frac{\alpha}{(\alpha-1)r\log n})}{\Big({1-\frac{1}{n}(1-\frac{\alpha}{(\alpha-1)r\log n})}\Big)^{r/\alpha}}
\leq \frac{1-rx_{k}^{\alpha}}{(1-x_{k}^{\alpha})^{r/\alpha}}\leq\frac{n^{r/\alpha-1}(n-r)}{(n-1)^{r/\alpha}}\Big(1+\frac{1}{n\log n}\Big),$$
and then
$$\frac{\Big(n-r+\frac{\alpha}{(\alpha-1)\log n}\Big)n^{r/\alpha-1}}
{\Big(n-1+\frac{\alpha}{(\alpha-1)r\log n}\Big)^{r/\alpha}}
\leq \frac{1-rx_{k}^{\alpha}}{(1-x_{k}^{\alpha})^{r/\alpha}}\leq\frac{n^{r/\alpha-1}(n-r)}{(n-1)^{r/\alpha}}\Big(1+\frac{1}{n\log n}\Big).$$
Further simplification leads to
$$1+\frac{\alpha}{(\alpha-1)(n-r)\log n}\leq\Big(1+\frac{\alpha}{(\alpha-1)r(n-1)\log n}\Big)^{r/\alpha}\Big(1+\frac{1}{n\log n}\Big).$$
For sufficiently large $n$, we have
\begin{displaymath}
\begin{split}
\Big(1+\frac{\alpha}{(\alpha-1)r(n-1)\log n}\Big)^{r/\alpha}
&=1+\frac{1}{(\alpha-1)(n-1)\log n}+O\Big(\frac{1}{(n\log n)^{2}}\Big)\\
&\leq1+\frac{1}{(\alpha-1)(n-1)\log n}+\frac{1}{(\alpha-1)(n-1)(n-2)\log n}\\
&=1+\frac{1}{(\alpha-1)(n-2)\log n}.
\end{split}
\end{displaymath}
Substituting this bound, we obtain
$$1+\frac{\alpha}{(\alpha-1)(n-r)\log n}\leq \Big(1+\frac{1}{(\alpha-1)(n-2)\log n}\Big)\Big(1+\frac{1}{n\log n}\Big).$$
By some cancellations and rearranging, we get
$$\frac{\alpha}{n-r}\leq \frac{1}{n-2}+\frac{\alpha-1}{n}+\frac{1}{n(n-2)\log n}.$$
Noting that $\frac{\alpha}{n-r}\geq\frac{\alpha}{n-2}$, this inequality  reduces to
$$2(\alpha-1)\leq\frac{1}{\log n},$$
which is impossible for sufficiently large $n$.
\end{proof}

Furthermore, we show that if the minimum component of a principal eigenvector of the spectral extremal hypergraph
approaches  $n^{-1/\alpha}$, then its minimum degree  approximates  its average degree,  implying that the spectral extremal hypergraph is asymptotically regular.

\begin{lem}\label{t2}
Let  $\mathcal{P}$ be a hereditary  property of $r$-graphs with $\hat{\pi}(\mathcal{P})>0$, and  $G_{n}\in \mathcal{P}_{n}$ satisfy $\lambda^{(\alpha)}(G_{n})=\lambda^{(\alpha)}(\mathcal{P},n)$ for $\alpha>1$. Let $0<\varepsilon<1$, $0\leq\varepsilon'<\varepsilon \hat{\pi}(\mathcal{P})/(r-1)$, and $\mathbf{x}\in \mathbb{R}^{n}$ be a principal eigenvector  of $G_{n}$. If
$n$ is sufficiently large and
\[(\mathbf{x}_{\textup{min}})^{\alpha}\geq \frac{1}{n}(1-\varepsilon'),\]
then
\[\delta(G_{n})\geq (1-\varepsilon)\hat{\pi}(\mathcal{P})\binom{n}{r-1}.\]
\end{lem}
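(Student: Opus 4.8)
The plan is to take a vertex $i$ of minimum degree and extract a lower bound on $d_{G_n}(i)=\delta(G_n)$ from the eigenvector equation \eqref{eigen}, which here reads
$$\lambda^{(\alpha)}(G_n)\,x_i^{\alpha-1}=(r-1)!\sum_{e\in E_{G_n}(i)}x_{e\setminus\{i\}}.$$
The left side I would bound from below: the normalization gives $x_i\ge \mathbf{x}_{\textup{min}}\ge(\tfrac1n(1-\varepsilon'))^{1/\alpha}$, hence $x_i^{\alpha-1}\ge(\tfrac{1-\varepsilon'}{n})^{(\alpha-1)/\alpha}$, while \eqref{zhy} together with $ex(\mathcal P,n)\ge\pi(\mathcal P)\binom nr$ (from the fact that $ex(\mathcal P,n)/\binom nr$ is non-increasing with limit $\pi(\mathcal P)$) yields $\lambda^{(\alpha)}(G_n)\ge\pi(\mathcal P)(n)_r n^{-r/\alpha}$. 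The real task is then a matching \emph{upper} bound on the edge-sum $S_i:=\sum_{e\in E_{G_n}(i)}x_{e\setminus\{i\}}$ in terms of $d_{G_n}(i)$.

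First I would apply H\"older's inequality with exponent $\alpha/(r-1)\ge1$ (valid for $\alpha\ge r-1$) followed by the weighted AM--GM bound $x_{e\setminus\{i\}}^{\alpha/(r-1)}=\prod_{j\in e\setminus\{i\}}x_j^{\alpha/(r-1)}\le\tfrac1{r-1}\sum_{j\in e\setminus\{i\}}x_j^{\alpha}$, giving
$$S_i\le d_{G_n}(i)^{\,1-(r-1)/\alpha}\Big(\tfrac1{r-1}\sum_{j\ne i}d_{G_n}(i,j)\,x_j^{\alpha}\Big)^{(r-1)/\alpha},$$
where $d_{G_n}(i,j)$ denotes the codegree. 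The crux is to bound $\sum_{j}d_{G_n}(i,j)x_j^{\alpha}$ sharply, and this is where the hypothesis enters decisively. Writing $x_j^\alpha=\tfrac1n+(x_j^\alpha-\tfrac1n)$ and using $\sum_j d_{G_n}(i,j)=(r-1)d_{G_n}(i)$ and $d_{G_n}(i,j)\le\binom{n-2}{r-2}$, I would charge the excess against the total mass above average: since $x_j^\alpha\ge\tfrac1n(1-\varepsilon')$ for all $j$ and $\sum_j x_j^\alpha=1$, one has $\sum_j(x_j^\alpha-\tfrac1n)_+=\sum_j(\tfrac1n-x_j^\alpha)_+\le\varepsilon'$, which produces the clean, $\mathbf{x}_{\textup{max}}$-free estimate
$$\sum_{j\ne i}d_{G_n}(i,j)x_j^{\alpha}\le\frac{(r-1)d_{G_n}(i)}{n}+\binom{n-2}{r-2}\varepsilon'.$$

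Combining these, the eigenvector identity becomes an inequality in the single unknown $d_{G_n}(i)$ whose right side is increasing in $d_{G_n}(i)$. Assuming for contradiction that $\delta(G_n)<(1-\varepsilon)\pi(\mathcal P)\binom{n}{r-1}$ and substituting this value, I would compare the two sides. After cancelling the common power of $n$ (both sides scale like $n^{(r-1)(\alpha-1)/\alpha}$), the inequality reduces to leading order to
$$\pi(\mathcal P)(1-\varepsilon')^{(\alpha-1)/\alpha}\ge\big((1-\varepsilon)\pi(\mathcal P)\big)^{1-(r-1)/\alpha}\big((1-\varepsilon)\pi(\mathcal P)+\varepsilon'\big)^{(r-1)/\alpha},$$
and a short computation shows this holds exactly when $\varepsilon'<\varepsilon\pi(\mathcal P)/(r-1)$, since it collapses to the trivially true bound $\pi(\mathcal P)\le 1\le r-1$. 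This yields the contradiction for all sufficiently large $n$.

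The main obstacle is precisely the estimate on $\sum_j d_{G_n}(i,j)x_j^\alpha$. The naive bound $x_{e\setminus\{i\}}\le\mathbf{x}_{\textup{max}}^{\,r-1}$ is fatal here: the hypothesis does \emph{not} force $\mathbf{x}_{\textup{max}}$ to be close to $n^{-1/\alpha}$ (a single heavy vertex carrying eigenweight $\approx\varepsilon'$ is consistent with $\mathbf{x}_{\textup{min}}^\alpha\ge\tfrac1n(1-\varepsilon')$), and such a bound loses a fixed multiplicative factor that cannot be absorbed into $(1-\varepsilon)$. The aggregate excess-mass device above sidesteps this by charging all heavy vertices simultaneously against the budget $\varepsilon'$, and it is exactly this step that dictates the threshold $\varepsilon'<\varepsilon\pi(\mathcal P)/(r-1)$. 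A secondary obstacle is the range $1<\alpha<r-1$, where the H\"older exponent $\alpha/(r-1)$ falls below $1$ and the convexity in the first step reverses; there I would instead keep the per-edge AM--GM bound and control the contribution of the few edges meeting heavy vertices by hand, once more through the excess-mass budget.
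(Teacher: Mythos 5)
Your argument, as written, only covers the range $\alpha\geq r-1$, while the lemma (and its later uses in the paper, e.g.\ Theorem \ref{tp}) requires every $\alpha>1$. The restriction enters exactly at your first step, H\"older's inequality with exponent $\alpha/(r-1)$, and your proposed fallback for $1<\alpha<r-1$ does not repair it. Keeping only the per-edge AM--GM bound gives
$$S_i\;\le\;\sum_{e\in E_{G_n}(i)}\Big(\tfrac{1}{r-1}\textstyle\sum_{j\in e\setminus\{i\}}x_j^{\alpha}\Big)^{(r-1)/\alpha},$$
and for $\alpha<r-1$ the exponent $(r-1)/\alpha$ exceeds $1$, so the right-hand side is a \emph{convex} function of the inner sums: you can no longer pull the summation over edges inside and reduce to the linear codegree functional $\sum_{j}d_{G_n}(i,j)x_j^{\alpha}$, which is the only place your excess-mass budget applies. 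To bound a convex expression you would instead need control of the largest inner sum, i.e.\ of $\mathbf{x}_{\textup{max}}$ --- precisely the quantity you yourself correctly identified as uncontrollable under the hypothesis. So for $r\geq 3$ there is a genuine gap on $1<\alpha<r-1$, and the sentence ``control the contribution of the few edges meeting heavy vertices by hand'' is not a substitute for an argument.

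For the range you do cover, the skeleton is sound: the excess-mass estimate $\sum_j(x_j^{\alpha}-\tfrac1n)_+=\sum_j(\tfrac1n-x_j^{\alpha})_+\le\varepsilon'$ is correct, as is the lower bound $\lambda^{(\alpha)}(G_n)\geq\pi(\mathcal{P})(n)_rn^{-r/\alpha}$, and the final inequality does hold under $\varepsilon'<\varepsilon\pi(\mathcal{P})/(r-1)$ when $\alpha\geq r-1$; but note that your ``collapses to $\pi(\mathcal{P})\le 1\le r-1$'' is only the first-order statement in $\varepsilon,\varepsilon'$, and since $\varepsilon$ is a fixed constant, not infinitesimal, that verification must be carried out honestly. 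The paper avoids both issues by applying H\"older in the conjugate form $\big(\sum_{e\in E_{G_n}(k)}x_{e\setminus\{k\}}\big)^{\alpha}\le\delta^{\alpha-1}\sum_{e\in E_{G_n}(k)}x_{e\setminus\{k\}}^{\alpha}$, which is valid for every $\alpha>1$, and then bounding $\sum_{e\in E_{G_n}(k)}x_{e\setminus\{k\}}^{\alpha}$ by complementation: it equals the sum over \emph{all} $(r-1)$-subsets of $V$ minus the non-edge terms, each non-edge term is at least $\mathbf{x}_{\textup{min}}^{\alpha(r-1)}$ by hypothesis, and Maclaurin's inequality bounds the full sum by $\binom{n}{r-1}/n^{r-1}$. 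This sidesteps codegrees and heavy vertices entirely and works uniformly in $\alpha>1$; replacing your first step with it would close the gap.
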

\begin{proof}
Set  $V=V(G_{n})$, $\lambda=\lambda^{(\alpha)}(G_{n})$, $\delta=\delta(G_{n})$, $\mu_n=\mathbf{x}_{\textup{min}}$, and let $k\in V$ be a vertex attaining minimum degree $\delta$. Considering the eigenequation for $\lambda^{(\alpha)}(G_{n})$  at  vertex  $k$:
$$\lambda \mu_n^{\alpha-1}\leq \lambda x^{\alpha-1}_{k}=(r-1)!\sum_{e\in E_{G_{n}}(k)}x_{e\backslash\{k\}}.$$
 By H\"older's inequality,  we have
\begin{equation}\label{e1}
\bigg(\frac{\lambda \mu_n^{\alpha-1}}{(r-1)!}\bigg)^{\alpha}\leq \delta^{\alpha-1}\sum_{e\in E_{G_{n}}(k)}x_{e\backslash\{k\}}^{\alpha}.
\end{equation}
First, observe that
\begin{align}\label{e2}
\begin{split}
\sum_{e\in E_{G_{n}}(k)}x_{e \backslash\{k\}}^{\alpha}
&=\sum_{S\in \binom{V}{r-1}}x_{S}^{\alpha}-\sum_{T\in \binom{V}{r-1}~and~T\cup \{k\} \notin E_{G_{n}}(k) }x_{T}^{\alpha}\\
&\leq \sum_{S\in \binom{V}{r-1}}x_{S}^{\alpha}-\sum_{T\in \binom{V}{r-1}~and~T\cup \{k\} \notin E_{G_{n}}(k) }\mu_n^{\alpha(r-1)}\\
&=\sum_{S\in \binom{V}{r-1}}x_{S}^{\alpha}-\big(\tbinom{n}{r-1}-\delta\big)\mu_n^{\alpha(r-1)}.
\end{split}
\end{align}
By Maclaurin's inequality, we have
\begin{equation}\label{e3}
\sum_{S\in \binom{V}{r-1}}x_{S}^{\alpha}\leq \binom{n}{r-1}\Bigg(n^{-1}\sum_{i\in V}x_{i}^{\alpha}\Bigg)^{r-1}=
\frac{\binom{n}{r-1}}{n^{r-1}}.
\end{equation}
Suppose for  contradiction that $\delta(G_{n})<(1-\varepsilon)\hat{\pi}(\mathcal{P})\binom{n}{r-1}$. Combining (\ref{e2}) and (\ref{e3}), we obtain
\begin{align}\label{e4}
\begin{split}
\sum_{e\in E_{G_{n}}(k)}x_{e \backslash\{k\}}^{\alpha}
&\leq\frac{\binom{n}{r-1}}{n^{r-1}}-\big(1-(1-\varepsilon)\hat{\pi}(\mathcal{P})\big)\tbinom{n}{r-1}\mu_n^{\alpha(r-1)}\\
&\leq\frac{\binom{n}{r-1}}{n^{r-1}}-\big(1-(1-\varepsilon)\hat{\pi}(\mathcal{P})\big)\tbinom{n}{r-1}\frac{(1-\varepsilon')^{r-1}}{n^{r-1}}\\
&=\frac{\binom{n}{r-1}}{n^{r-1}}\Big(1-\big(1-(1-\varepsilon)\hat{\pi}(\mathcal{P})\big)\big(1-\varepsilon'\big)^{r-1}\Big).
\end{split}
\end{align}
 By  Bernoulli's inequality and the definition of $\varepsilon'$,   we have
 \begin{displaymath}
\begin{split}
(1-\varepsilon')^{r-1}\geq 1-(r-1)\varepsilon' \geq 1-\varepsilon \hat{\pi}(\mathcal{P}).
\end{split}
\end{displaymath}
Therefore,
\begin{align}\label{e5}
\begin{split}
\sum_{e\in E_{G_{n}}(k)}x_{e \backslash\{k\}}^{\alpha}
&\leq \frac{\binom{n}{r-1}}{n^{r-1}}\Big(1-\big(1-(1-\varepsilon)\hat{\pi}(\mathcal{P})\big)\big(1-\varepsilon \hat{\pi}(\mathcal{P})\big)\Big)\\
&= \frac{\binom{n}{r-1}}{n^{r-1}}\Big(\hat{\pi}(\mathcal{P})-\varepsilon(1-\varepsilon)\hat{\pi}(\mathcal{P})^{2}\Big)\\
&\leq\frac{\hat{\pi}(\mathcal{P})\binom{n}{r-1}}{n^{r-1}}.
\end{split}
\end{align}

On the other hand,  by Lemmas \ref{exi} and   \ref{t1},  for sufficiently large  $n$,
$$\lambda\geq (1-r^{2}/n)\lambda^{(\alpha)}(\mathcal{P})n^{r(\alpha-1)/\alpha}=
(1-r^{2}/n)\hat{\pi}(\mathcal{P})n^{r(\alpha-1)/\alpha}.$$
Combining this with (\ref{e1}) and (\ref{e5}), we obtain
$$\frac{(1-r^{2}/n)^{\alpha}\hat{\pi}(\mathcal{P})^{\alpha}n^{r(\alpha-1)}}{((r-1)!)^{\alpha}}\mu_n^{\alpha(\alpha-1)}\leq
\bigg(\frac{\lambda \mu_n^{\alpha-1}}{(r-1)!}\bigg)^{\alpha}\leq \frac{\hat{\pi}(\mathcal{P})\delta^{\alpha-1}\binom{n}{r-1}}{n^{r-1}}.$$
Since $\mu_n^{\alpha}\geq(1-\varepsilon')/n$ and  $\delta<(1-\varepsilon)\hat{\pi}(\mathcal{P})\binom{n}{r-1}$,   we arrive at
$$\frac{(1-\varepsilon')^{\alpha-1}(1-r^{2}/n)^{\alpha}\hat{\pi}(\mathcal{P})^{\alpha}n^{(r-1)(\alpha-1)}}{((r-1)!)^{\alpha}}
\leq\frac{(1-\varepsilon)^{\alpha-1}\hat{\pi}(\mathcal{P})^{\alpha}((n)_{r-1})^{\alpha}}{((r-1)!)^{\alpha}n^{r-1}}.$$
After some eliminations and scalings, we have
$$(1-\varepsilon')^{\alpha-1}(1-r^{2}/n)^{\alpha}\leq (1-\varepsilon)^{\alpha-1},$$
that is
\begin{equation}\label{qj}
1-\frac{r^{2}}{n}\leq \big(\frac{1-\varepsilon}{1-\varepsilon'}\big)^{\frac{\alpha-1}{\alpha}}.
\end{equation}
Thus,
$$0<1-\big(\frac{1-\varepsilon}{1-\varepsilon'}\big)^{\frac{\alpha-1}{\alpha}}\leq\frac{r^{2}}{n},$$
which is a contradiction. This completes the proof of Lemma \ref{t2}.
\end{proof}

\subsection{Spectral extremal hypergraphs in pattern-colorable hypergraphs}

Building upon  properties of spectral extremal hypergraphs for the hereditary property, we establish  lower bounds on the minimum component of the principal eigenvector and the minimum degree of spectral extremal hypergraphs in pattern-colorable hypergraphs. The following concepts and results are fundamental.

\begin{defi}[Clonal family\cite{CDS2024}]
Let $G$ be an $r$-graph and let $u, v\in V(G)$ be two vertices. Denote by $G_{u\to v}$ the $r$-graph obtained as follows: first delete all edges incident to $u$, then for every edge $e\in E(G)$ with $v\in e$ and $u\notin e$, add the edge $e+u-v$. A family $\mathcal{F}$ of $r$-graphs is called \emph{clonal} if for every $G\in\mathcal{F}$ and all $u, v\in V(G)$ we have $G_{u\to v}\in\mathcal{F}$.
 \end{defi}

\begin{defi}[Principal ratio]
For $\alpha>1$, and a principal eigenvector $\mathbf{x}$ of an $r$-graph $G$,   the \emph{principal ratio} of $(G,\mathbf{x})$ is
$$\gamma(G,\mathbf{x}):=\frac{\mathbf{x}_{\textup{max}}}{\mathbf{x}_{\textup{min}}},$$
where $\mathbf{x}_{\textup{max}}$ is the maximum component of the vector $\mathbf{x}$.
If $\mathbf{x}$ has a zero component, then we set $\gamma(G,\mathbf{x})=\infty$.
\end{defi}

In \cite{CDS2024}, Cooper, Desai, and Sahay studied the principal eigenvector of extremal graphs for the clonal family of  $r$-graphs, and showed that the principal ratio is close to $1$.

\begin{lem}[\hspace{1sp}\cite{CDS2024}]\label{clonal}
Let $\alpha>1$ and $\mathcal{F}$ be a clonal family of $r$-graphs with $\hat{\pi}(\mathcal{F})>0$.  Suppose that $G\in \mathcal{F}_{n}$  such that $\lambda^{(\alpha)}(G)=\lambda^{(\alpha)}(\mathcal{F},n)$.
If $\mathbf{x}$ is a Perron-Frobenius eigenvector of $G$, then
$$\gamma(G,\mathbf{x})=1+O(n^{-1}).$$
\end{lem}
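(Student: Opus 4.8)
The plan is to exploit the defining feature of a clonal family together with the extremality of $G$. Write $\lambda=\lambda^{(\alpha)}(G)=\lambda^{(\alpha)}(\mathcal{F},n)$ and, for a vertex $w$, abbreviate its link sum by $L_w=\sum_{e\in E_G(w)}x_{e\setminus\{w\}}$, so that the eigenequation \eqref{eigen} reads $\lambda x_w^{\alpha-1}=(r-1)!\,L_w$. Let $u$ and $v$ be vertices with $x_u=x_{\textup{min}}$ and $x_v=x_{\textup{max}}$. Since $\mathcal{F}$ is clonal, $G_{u\to v}\in\mathcal{F}_n$; crucially $G_{u\to v}$ has the same vertex set as $G$, so $\mathbf{x}$ is an admissible unit vector for it and extremality gives $\lambda\ge\lambda^{(\alpha)}(G_{u\to v})\ge P_{G_{u\to v}}(\mathbf{x})$. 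First I would compute $P_{G_{u\to v}}(\mathbf{x})-P_G(\mathbf{x})$ directly from the definition of cloning: replacing $v$ by $u$ in each edge through $v$ but not $u$ multiplies its weight by $x_u/x_v$, and this bookkeeping yields $P_{G_{u\to v}}(\mathbf{x})-P_G(\mathbf{x})=r!\,x_u\big[(L_v-L_u)-\sum_{e\in E_G(u)\cap E_G(v)}x_{e\setminus\{v\}}\big]$. Because $\mathbf{x}$ is a Perron--Frobenius eigenvector we have $x_u>0$, so the inequality $P_{G_{u\to v}}(\mathbf{x})\le\lambda=P_G(\mathbf{x})$ collapses to the clean comparison $L_v-L_u\le\sum_{e\in E_G(u)\cap E_G(v)}x_{e\setminus\{v\}}$.

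Next I would convert this into a statement about $x_{\textup{max}}$ and $x_{\textup{min}}$. Substituting the eigenequation turns the left side into $\frac{\lambda}{(r-1)!}\big(x_{\textup{max}}^{\alpha-1}-x_{\textup{min}}^{\alpha-1}\big)$, while each summand on the right factors as $x_{e\setminus\{v\}}=x_u\prod_{z\in e\setminus\{u,v\}}x_z\le x_{\textup{min}}x_{\textup{max}}^{r-2}$, and there are at most $\binom{n-2}{r-2}$ edges containing both $u$ and $v$. This produces the master inequality
\[\frac{\lambda}{(r-1)!}\big(x_{\textup{max}}^{\alpha-1}-x_{\textup{min}}^{\alpha-1}\big)\le\binom{n-2}{r-2}x_{\textup{min}}x_{\textup{max}}^{r-2}.\]
Running the same argument with $u,v$ replaced by an arbitrary vertex $w$ (cloning $u\to w$, and $w\to v$) gives, for every $w$, two-sided estimates sandwiching $x_w^{\alpha-1}$ between $x_{\textup{min}}^{\alpha-1}$ and $x_{\textup{max}}^{\alpha-1}$ up to an additive error of order $\binom{n-2}{r-2}\lambda^{-1}x_{\textup{max}}^{r-1}$.

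The main obstacle is the a priori scale control: to extract a rate of $O(n^{-1})$ from the master inequality I must first know that $x_{\textup{min}}$ and $x_{\textup{max}}$ are both of the exact order $n^{-1/\alpha}$. The lower bound $\lambda=\Omega(n^{r-r/\alpha})$ comes for free from $\pi(\mathcal{F})>0$ via \eqref{zhy}, so the coefficient satisfies $\binom{n-2}{r-2}\lambda^{-1}=O(n^{r/\alpha-2})$. I would then feed the per-vertex estimates into the normalization $\sum_w x_w^{\alpha}=1$: since every $x_w^{\alpha-1}$ lies within a small error $\Delta$ of $x_{\textup{max}}^{\alpha-1}$ (resp.\ of $x_{\textup{min}}^{\alpha-1}$), summing forces $x_{\textup{max}}^{\alpha-1}\le n^{-(\alpha-1)/\alpha}+\Delta$ and $x_{\textup{min}}^{\alpha-1}\ge n^{-(\alpha-1)/\alpha}-\Delta$. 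Starting from the trivial bound $x_{\textup{max}}\le1$, the error satisfies $\Delta\ll n^{-(\alpha-1)/\alpha}$ precisely when $\alpha>r-1$ (which includes the range $\alpha\ge r$ where a Perron--Frobenius eigenvector is guaranteed), and a single bootstrap step then pins down $x_{\textup{max}},x_{\textup{min}}=(1+o(1))n^{-1/\alpha}$. This is the delicate point, entirely analogous to the lower bound on $x_{\textup{min}}(G_n)$ obtained in Lemma \ref{t4}; indeed, for the hereditary, blow-up-closed clonal families arising later in the paper one could instead invoke Lemmas \ref{t4} and \ref{t2} directly.

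Finally, with both extreme components equal to $(1+o(1))n^{-1/\alpha}$, I would conclude by a mean value estimate: $x_{\textup{max}}^{\alpha-1}-x_{\textup{min}}^{\alpha-1}=(\alpha-1)\xi^{\alpha-2}(x_{\textup{max}}-x_{\textup{min}})$ for some $\xi$ of order $n^{-1/\alpha}$. Inserting this together with the scales $\lambda=\Theta(n^{r-r/\alpha})$ and $x_{\textup{min}}x_{\textup{max}}^{r-2}=\Theta(n^{-(r-1)/\alpha})$ into the master inequality and solving for the gap yields $x_{\textup{max}}-x_{\textup{min}}=O(n^{-1-1/\alpha})$, whence $\gamma(G,\mathbf{x})=x_{\textup{max}}/x_{\textup{min}}=1+O(n^{-1})$, as claimed. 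The exponent arithmetic in this last step is routine; the only genuine work lies in the scale-control bootstrap of the preceding paragraph.
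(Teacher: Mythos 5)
Your overall architecture is the right one for this lemma: clone the minimum vertex into the maximum vertex, use extremality of $G$ within the clonal family to get $P_{G_{u\to v}}(\mathbf{x})\le\lambda^{(\alpha)}(G_{u\to v})\le\lambda$, and convert via the eigenequation \eqref{eigen} into your master inequality (your bookkeeping identity for $P_{G_{u\to v}}(\mathbf{x})-P_G(\mathbf{x})$ is correct, and the final exponent arithmetic is also fine). The genuine gap is the scale-control step, which you flag but do not close. Your bootstrap starts from $x_{\max}\le 1$ and needs $\Delta=O\big(n^{r/\alpha-2}\big)x_{\max}^{r-1}\ll n^{-(\alpha-1)/\alpha}$, which forces $\alpha>r-1$; so for $r\ge 3$ your argument leaves the whole range $1<\alpha\le r-1$ unproved. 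This is not a harmless restriction: the lemma is stated for every $\alpha>1$, and the paper invokes it in exactly that generality in the proof of Theorem \ref{Co}(2), where positivity of the principal eigenvectors is supplied by Claim 2 there (the Perron--Frobenius eigenvector is a \emph{hypothesis} of the lemma, not a consequence of $\alpha\ge r$, so your parenthetical suggesting the range $\alpha\ge r$ is the relevant one misreads the statement). Your proposed fallback also fails, twice over: Lemma \ref{t4} (and Lemma \ref{t2}) requires a hereditary property, which a general clonal family need not be, and Lemma \ref{t4}'s conclusion holds only for infinitely many $n$, which can never yield an asymptotic bound $\gamma(G,\mathbf{x})=1+O(n^{-1})$ valid for all large $n$. (Note the paper itself gives no proof of this lemma; it is imported from \cite{CDS2024}, whose argument necessarily works for every $\alpha>1$.)

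The repair is to bound the co-degree sum without $x_{\max}$: write $\sum_{e\ni u,v}x_{e\setminus\{v\}}\le x_u\sum_{S\in\binom{V}{r-2}}x_S$ and apply Maclaurin's inequality together with the power-mean bound $\sum_i x_i\le n^{1-1/\alpha}$ (exactly as in \eqref{e3} and in the Appendix proof of Theorem \ref{t8}) to get $\sum_{e\ni u,v}x_{e\setminus\{v\}}\le x_u\,n^{(r-2)(1-1/\alpha)}/(r-2)!$. Since $\lambda=\Omega(n^{r-r/\alpha})$ by \eqref{zhy} and $\pi(\mathcal{F})>0$, the cloning inequality applied to an arbitrary vertex $w$ becomes $x_{\max}^{\alpha-1}-x_w^{\alpha-1}\le C\,x_w\,n^{-2(1-1/\alpha)}$ for a constant $C$, valid for all $\alpha>1$. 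A short case analysis on whether $C\,x_{\max}n^{-2(1-1/\alpha)}\le\tfrac12 x_{\max}^{\alpha-1}$ shows the opposite case is impossible for large $n$ (for $\alpha=2$, $\alpha>2$, and $1<\alpha<2$ separately, using $n^{-1/\alpha}\le x_{\max}\le 1$), so every component satisfies $x_w^{\alpha-1}\ge\tfrac12x_{\max}^{\alpha-1}$; the normalization $\sum_w x_w^{\alpha}=1$ then gives $x_{\min},x_{\max}=\Theta(n^{-1/\alpha})$ with no restriction on $\alpha$. Dividing the displayed inequality at $w=u$ by $x_{\min}^{\alpha-1}$ yields $\gamma(G,\mathbf{x})^{\alpha-1}\le 1+C\,x_{\min}^{2-\alpha}n^{-2(1-1/\alpha)}=1+O(n^{-1})$, which gives the lemma directly, with no bootstrap and no mean-value step. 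With this single substitution your proof is complete for every $\alpha>1$; as written, it does not establish the lemma in the range where the paper actually uses it.
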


\noindent {\bf Remark.} The definitions of parameters $\hat{\pi}(\mathcal{F})$, $\mathcal{F}_{n}$ and $\lambda^{(\alpha)}(\mathcal{F},n)$ for the
clonal (or general) family of $r$-graphs are the same as those for the hereditary  property of $r$-graphs.

\begin{lem}[\hspace{1sp}\cite{KNY2014}]\label{aut}
Let $G$ be an  $r$-graph of order $n$ with at least one edge, and let $u$ and $v$ be vertices of $G$ such that  transposing  $u$ and $v$ is an automorphism. If $\alpha>1$, and $\mathbf{x}$ is an eigenvector corresponding to $\lambda^{(\alpha)}(G)$, then $x_{u}=x_{v}$.
\end{lem}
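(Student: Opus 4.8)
The plan is to read the claim off from the eigenequations~\eqref{eigen} together with the symmetry furnished by the automorphism. I would run the argument on a nonnegative (principal) eigenvector $\mathbf{x}$ of $G$, which is the natural object here since \eqref{eigen} is stated for such vectors and one always exists; note that nonnegativity is genuinely needed (see the obstruction below), and for $\alpha\ge r$ the principal eigenvector is even strictly positive by Nikiforov's Perron--Frobenius theory. Throughout write $\lambda=\lambda^{(\alpha)}(G)$, which is positive because $G$ has at least one edge.

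First I would split the neighbourhood of each of $u,v$ according to whether an edge meets the other vertex. Put
$$A=\sum_{e\ni u,\ v\notin e} x_{e\setminus\{u\}},\qquad B=\sum_{e\ni u,\ v\in e} x_{e\setminus\{u,v\}} .$$
Since an edge through both $u$ and $v$ contributes $x_v\,x_{e\setminus\{u,v\}}$ to the sum at $u$, the eigenequation at $u$ becomes $\lambda x_u^{\alpha-1}=(r-1)!\,(A+x_vB)$. The decisive point is that $\sigma=(u\,v)$ is an automorphism: it maps the edges with $u\in e,\ v\notin e$ bijectively onto the edges with $v\in f,\ u\notin f$, and since $\sigma$ fixes every vertex other than $u,v$ one has $e\setminus\{u\}=\sigma(e)\setminus\{v\}$, whence $x_{e\setminus\{u\}}=x_{\sigma(e)\setminus\{v\}}$; moreover the edges containing both $u$ and $v$ are fixed setwise by $\sigma$. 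Therefore the ``cross sum'' $A$ and the ``common sum'' $B$ are the same whether computed from $u$ or from $v$, giving the symmetric pair
$$\lambda x_u^{\alpha-1}=(r-1)!\,(A+x_vB),\qquad \lambda x_v^{\alpha-1}=(r-1)!\,(A+x_uB).$$

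Subtracting these and then multiplying by $(x_u-x_v)$ yields
$$\lambda\bigl(x_u^{\alpha-1}-x_v^{\alpha-1}\bigr)(x_u-x_v)=-(r-1)!\,B\,(x_u-x_v)^2 .$$
On the left $t\mapsto t^{\alpha-1}$ is strictly increasing on $[0,\infty)$ for $\alpha>1$ and $\lambda>0$, so the left-hand side is nonnegative; on the right $B\ge 0$ (as $\mathbf{x}\ge 0$), so the right-hand side is nonpositive. Hence both vanish, and strict monotonicity of $t\mapsto t^{\alpha-1}$ forces $x_u=x_v$, with no special treatment needed when a coordinate equals zero.

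The hard part will be the control of the sign of $B$, and this is exactly where nonnegativity is indispensable: for a signed maximizer the conclusion can fail, as the single edge $\{u,v,a\}$ with $\mathbf{x}=(t,-t,-t)$ (a genuine eigenvector for suitable $t>0$) already shows. A variant that sidesteps \eqref{eigen} makes the same mechanism transparent: because $P_G$ is multilinear, its restriction to $(x_u,x_v)$ with all other coordinates frozen is bilinear and, by the automorphism, symmetric, so replacing $(x_u,x_v)$ by $\bigl(\tfrac{x_u+x_v}{2},\tfrac{x_u+x_v}{2}\bigr)$ changes $P_G$ by $\tfrac{r!}{4}B\,(x_u-x_v)^2\ge 0$ while strictly decreasing $\|\mathbf{x}\|_\alpha$ unless $x_u=x_v$, by strict convexity of the $\alpha$-norm; renormalizing would then strictly increase $P_G$ and contradict maximality. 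Either route delivers $x_u=x_v$.
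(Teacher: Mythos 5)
The paper offers no proof of Lemma \ref{aut}: it is imported verbatim from \cite{KNY2014}, so there is no internal argument to compare yours against. On its own terms, your eigenequation route is correct and is the standard one. The automorphism identities (your cross sum and common sum agree whether computed at $u$ or at $v$) are justified exactly as you say, the subtraction identity
\[
\lambda\bigl(x_u^{\alpha-1}-x_v^{\alpha-1}\bigr)(x_u-x_v)=-(r-1)!\,B\,(x_u-x_v)^2
\]
is right, and the sign comparison ($\lambda>0$ because $G$ has an edge, $B\ge 0$ by nonnegativity, and $t\mapsto t^{\alpha-1}$ strictly increasing on $[0,\infty)$ for $\alpha>1$) closes the argument, zero coordinates included. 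Your symmetrization variant is equally sound; note that both mechanisms hinge on $B\ge 0$ and on \eqref{eigen}, which the paper states precisely for principal eigenvectors and $\alpha>1$, so your decision to work with a nonnegative maximizer is the right one. (A minor aside: strict positivity of principal eigenvectors for $\alpha\ge r$ requires connectivity, but you never use this.)

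The more substantial point is the one you raise about nonnegativity, and you are right that it is not a convenience but a necessity. By the paper's own definition, an eigenvector is any unit-$\alpha$-norm maximizer of $P_G$, and your example --- the single edge $\{u,v,a\}$ with $\mathbf{x}=(t,-t,-t)$, $t=3^{-1/\alpha}$ --- is such a maximizer: it attains $6\cdot 3^{-3/\alpha}=\lambda^{(\alpha)}(G)$, and it even satisfies the signed Lagrange conditions, yet $x_u\neq x_v$ although transposing $u$ and $v$ is an automorphism. Hence the lemma as transcribed is false for $r\geq 3$ unless ``eigenvector'' is read as ``nonnegative (principal) eigenvector''; your proof establishes exactly this corrected statement, which is the strongest true version, and no proof of the literal statement could exist. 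This does no damage to the paper, since Lemma \ref{aut} is only ever invoked for principal eigenvectors (inside the proof of Theorem \ref{Co}), so all downstream results stand; but the hypothesis ``nonnegative'' should be added to the statement as quoted.
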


\begin{lem}[\hspace{1sp}\cite{KNY2014}]\label{nonne}
Let $\alpha\geq1$, and let $G$ be an  $r$-graph such that every nonnegative eigenvector corresponding to $\lambda^{(\alpha)}(G)$ is positive. If $H$ is a subgraph of $G$, then $\lambda^{(\alpha)}(H)<\lambda^{(\alpha)}(G)$, unless $H=G$.
\end{lem}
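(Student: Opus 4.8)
The plan is to prove the strict inequality by a direct test-vector comparison together with an equality analysis. First I would fix a principal (nonnegative) eigenvector $\mathbf{y}$ of $H$, which exists by the discussion following the definition of $\lambda^{(\alpha)}$, so that $\|\mathbf{y}\|_{\alpha}=1$ and $P_{H}(\mathbf{y})=\lambda^{(\alpha)}(H)$. Since $V(H)\subseteq V(G)$, I would extend $\mathbf{y}$ to a vector $\tilde{\mathbf{y}}\in\mathbb{R}^{V(G)}$ by assigning $0$ to every vertex of $V(G)\setminus V(H)$; zero-padding preserves the $\alpha$-norm, so $\|\tilde{\mathbf{y}}\|_{\alpha}=1$.

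The core monotonicity step is the chain
$$\lambda^{(\alpha)}(H)=P_{H}(\tilde{\mathbf{y}})\leq P_{G}(\tilde{\mathbf{y}})\leq\lambda^{(\alpha)}(G),$$
where the first inequality uses $E(H)\subseteq E(G)$ together with the nonnegativity of $\tilde{\mathbf{y}}$ (each monomial $\prod_{i\in e}\tilde{y}_{i}\geq 0$, so enlarging the index set of the sum can only increase it), and the second is the variational definition of $\lambda^{(\alpha)}(G)$ applied to the unit vector $\tilde{\mathbf{y}}$. This already yields $\lambda^{(\alpha)}(H)\leq\lambda^{(\alpha)}(G)$.

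It remains to force $H=G$ under the assumption of equality. If $\lambda^{(\alpha)}(H)=\lambda^{(\alpha)}(G)$, then both inequalities above are equalities. Equality in the second shows that $\tilde{\mathbf{y}}$ is a unit nonnegative eigenvector of $G$ attaining $\lambda^{(\alpha)}(G)$, i.e. a principal eigenvector; by the hypothesis that every such eigenvector is positive, $\tilde{\mathbf{y}}$ has all positive entries. But $\tilde{\mathbf{y}}$ vanishes on $V(G)\setminus V(H)$, so this set is empty and $V(H)=V(G)$, giving $\tilde{\mathbf{y}}=\mathbf{y}>0$. Feeding this back into equality in the first inequality gives $\sum_{e\in E(G)\setminus E(H)}\prod_{i\in e}y_{i}=0$; since $\mathbf{y}>0$ makes every such product strictly positive, there can be no edges in $E(G)\setminus E(H)$, whence $E(H)=E(G)$ and therefore $H=G$.

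The argument is short, and the only real subtlety—the point I would flag as the crux—is to keep the two levels (vertices and edges) cleanly separated in the equality case: the positivity hypothesis is invoked exactly once, to rule out zero components and thereby pin down $V(H)=V(G)$, after which strict positivity of the monomials promotes the edge inequality to $E(H)=E(G)$. I would also note that nothing in the argument requires $\alpha>1$: it uses only the variational characterization of $\lambda^{(\alpha)}$ and the existence of a nonnegative maximizer, both valid for $\alpha\geq 1$, so the full range of the hypothesis is covered.
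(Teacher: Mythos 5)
Your proof is correct, and it is essentially the standard argument for this lemma: the paper itself only cites \cite{KNY2014} without reproducing a proof, and the proof there proceeds exactly as yours does, by zero-extending a principal eigenvector of $H$, comparing $P_H$ and $P_G$ via the variational characterization, and using the positivity hypothesis to rule out both missing vertices and missing edges in the equality case. Your closing observation that only the variational definition (not the Lagrange eigen-equations) is needed, so that $\alpha=1$ is covered, is also accurate and consistent with how the paper defines eigenvectors as norm-one maximizers of $P_G$.
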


Recall that $\mathrm{Col}(P)$ denotes  the family of all $P$-colorable $r$-graphs.
 It is not hard to show that $\mathrm{Col}(P)$ is hereditary and multiplicative (see Fact A in Appendix).  Lemma \ref{flat} implies that $\mathrm{Col}(P)$  is flat, and hence  $\hat{\pi}(\mathrm{Col}(P))>0$. Some spectral extremal problems for hypergraph classes can be reduced to the corresponding problems in $\mathrm{Col}(P)$ for suitable  $P$ (cf. Theorem \ref{chro}).
To characterize the properties of the spectral extremal hypergraphs in $\mathrm{Col}(P)$, we require the following  lemma.

\begin{lem}\label{t5}
Let $\alpha>1$, $r\geq2$ and $1\leq i\leq r$.  If $m$ is sufficiently large, then
$$\binom{m+1}{i}\Big(\frac{m}{m+1}\Big)^{i/\alpha}-\binom{m}{i}\geq\binom{m-1}{i-1}
\Big(1-\frac{1}{\alpha}-\frac{1}{\alpha(m-r+1)}\Big).$$
\end{lem}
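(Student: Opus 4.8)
The plan is to first remove the binomial coefficients with the elementary identities $\binom{m}{i}=\frac{m}{i}\binom{m-1}{i-1}$ and $\binom{m+1}{i}=\frac{m+1}{m-i+1}\binom{m}{i}$. Dividing the claimed inequality by $\binom{m-1}{i-1}>0$ reduces it to the one-variable statement
\[
\frac{m(m+1)}{i(m-i+1)}\Big(\frac{m}{m+1}\Big)^{i/\alpha}-\frac{m}{i}\ \ge\ 1-\frac{1}{\alpha}-\frac{1}{\alpha(m-r+1)},
\]
which is free of binomial coefficients and amenable to direct asymptotic analysis as $m\to\infty$.

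The core step is a Taylor expansion of both sides in powers of $1/m$. Writing $\big(\tfrac{m}{m+1}\big)^{i/\alpha}=(1-u)^{i/\alpha}$ with $u=\tfrac{1}{m+1}$ and using the binomial series $(1-u)^{i/\alpha}=1-\tfrac{i}{\alpha}u+\tfrac{(i/\alpha)(i/\alpha-1)}{2}u^{2}+O(u^{3})$, together with $\tfrac{m+1}{m-i+1}=1+\tfrac{i}{m-i+1}$, the product $\tfrac{m(m+1)}{i(m-i+1)}\big(\tfrac{m}{m+1}\big)^{i/\alpha}$ is $O(m)$ whose leading part is exactly $\tfrac{m}{i}$; this is cancelled by the subtracted $-\tfrac{m}{i}$. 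I expect the surviving constant term of the left-hand side to be precisely $1-\tfrac1\alpha$, which matches the constant term of the right-hand side, so the inequality is decided at order $1/m$. The role of the term $-\tfrac{1}{\alpha(m-r+1)}$ on the right is to supply the extra slack of $+\tfrac{1}{\alpha m}$ when forming the difference of the two sides.

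Carrying out the expansion reduces the whole inequality to verifying that the coefficient of $1/m$ in the difference, namely
\[
C_i=(i-1)+\frac{2-i}{\alpha}+\frac{i/\alpha-1}{2\alpha},
\]
is strictly positive; for then the two sides differ by $\tfrac{C_i}{m}(1+o(1))>0$ for all sufficiently large $m$. The main obstacle is precisely this positivity, which must be established uniformly for all $1\le i\le r$ and all $\alpha>1$. I would do this by setting $s=1/\alpha\in(0,1)$ and rewriting $C_i=(i-1)+(\tfrac32-i)s+\tfrac{i}{2}s^{2}$, an upward-opening quadratic in $s$. For $i=1$ one checks directly that $C_1=\tfrac{\alpha+1}{2\alpha^{2}}>0$; for $2\le i\le r$ the minimum over $s$ is attained at $s^{\ast}=1-\tfrac{3}{2i}\in(0,1)$, where $C_i=\tfrac{i+1}{2}-\tfrac{9}{8i}>0$. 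This settles $C_i>0$ in every case and completes the proof. The delicate point is that the second-order term of the binomial expansion genuinely contributes to $C_i$, so it cannot be discarded; dropping it would leave the regime $i=1$, $\alpha$ large (where the first-order contribution to $C_i$ degenerates in the limit) unresolved.
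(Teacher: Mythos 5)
Your proof is correct: I checked the normalization by $\binom{m-1}{i-1}$, the expansion (your $C_i$ equals $(i-1)+(\tfrac32-i)s+\tfrac{i}{2}s^{2}$ with $s=1/\alpha$, which is indeed what the expansion gives), and the positivity argument via the quadratic minimum $\tfrac{i+1}{2}-\tfrac{9}{8i}>0$ for $i\geq2$. However, your route diverges from the paper's in its second half. Both proofs begin with the same second-order Taylor expansion of $\big(\tfrac{m}{m+1}\big)^{i/\alpha}$, and both genuinely need the second-order term, since for $i/\alpha<1$ Bernoulli's inequality points the wrong way: the paper uses it to prove $\big(\tfrac{m}{m+1}\big)^{i/\alpha}>1-\tfrac{i}{\alpha m}$ via $\tfrac{i}{\alpha}+\binom{i/\alpha}{2}=\tfrac{i}{\alpha}\cdot\tfrac{1+i/\alpha}{2}>0$, which plays exactly the role of your observation that $C_1$ would degenerate without the quadratic term. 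But after that single asymptotic step the paper works \emph{exactly}: by Pascal's rule,
$$\binom{m+1}{i}\Big(1-\frac{i}{\alpha m}\Big)-\binom{m}{i}=\binom{m-1}{i-1}\Big(1-\frac{1}{\alpha}+\frac{\alpha(i-1)-i}{\alpha(m-i+1)}\Big),$$
and the lemma follows from $\alpha(i-1)-i\geq-1$ (i.e.\ $(\alpha-1)(i-1)\geq0$) together with $m-i+1\geq m-r+1$ --- no remaining error terms and no case distinction on $i$. Your approach instead expands everything to order $1/m$, so you must track $O(m^{-2})$ errors and prove positivity of a quadratic in $s$ by minimization, a heavier verification; what it buys is the exact first-order coefficient of the gap, making visible that the inequality is decided at order $1/m$ and is nearly tight when $i=1$ and $\alpha$ is large. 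One small caveat: the uniformity in $\alpha$ you insist on is not required by the statement ($\alpha$, $r$, $i$ are all fixed before $m\to\infty$), and your argument does not actually deliver it anyway, since the implicit constants in your error terms depend on $\alpha$; only uniformity in $i$ matters for the application in Theorem \ref{Co}, and that is automatic because $i$ ranges over the finite set $\{1,\dots,r\}$.
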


\begin{proof}
For  enough large $m$, we have
\begin{displaymath}
\begin{split}
\Big(\frac{m}{m+1}\Big)^{i/\alpha}
&=\Big(1-\frac{1}{m+1}\Big)^{i/\alpha}\\
&=1-\frac{i}{\alpha(m+1)}+\frac{\binom{i/\alpha}{2}}{(m+1)^{2}}
+o\big(m^{-2}\big)\\
&=1-\frac{i}{\alpha m}+\frac{i}{\alpha m(m+1)}+\frac{\binom{i/\alpha}{2}}{(m+1)^{2}}
+o\big(m^{-2}\big).
\end{split}
\end{displaymath}
Since  $\frac{i}{\alpha}+\binom{i/\alpha}{2}>0$, it follows that
$$\frac{i}{\alpha m(m+1)}+\frac{\binom{i/\alpha}{2}}{(m+1)^{2}}
+o\big(m^{-2}\big)>0.$$
Therefore,
\begin{equation}\label{e9}
\Big(\frac{m}{m+1}\Big)^{i/\alpha}>1-\frac{i}{\alpha m}.
\end{equation}

On the  other  hand, we have
\begin{align}\label{e10}
\begin{split}
\binom{m+1}{i}\Big(1-\frac{i}{\alpha m}\Big)-\binom{m}{i}
&=\binom{m}{i-1}-\frac{m+1}{\alpha(m-i+1)}\binom{m-1}{i-1}\\
&=\frac{m}{m-i+1}\binom{m-1}{i-1}-\frac{m+1}{\alpha(m-i+1)}\binom{m-1}{i-1}\\
&=\binom{m-1}{i-1}
\Big(1-\frac{1}{\alpha}+\frac{\alpha(i-1)-i}{\alpha(m-i+1)}\Big)\\
&\geq \binom{m-1}{i-1}
\Big(1-\frac{1}{\alpha}-\frac{1}{\alpha(m-i+1)}\Big)\\
&\geq \binom{m-1}{i-1}
\Big(1-\frac{1}{\alpha}-\frac{1}{\alpha(m-r+1)}\Big).
\end{split}
\end{align}
The conclusion follows immediately from (\ref{e9}) and (\ref{e10}).
\end{proof}

\begin{thm}\label{Co}
Let  $P=([l],E)$ be an  $r$-pattern, and  $G_{n}\in \mathrm{Col}(P)_{n}$ satisfy $\lambda^{(\alpha)}(G_{n})=\lambda^{(\alpha)}(\mathrm{Col}(P),n)$. If $\alpha>1$, then there exist $n_{1}\in \mathbb{N}$ and a constant $M>0$  such that for any $n\geq n_{1}$, the following hold:

$(1)$ $\lambda^{(\alpha)}(G_{n+1}) \ge  \big(1 + r \big(1 - \frac{1}{\alpha} - \frac{l}{\alpha(n - l r + l)}\big) ( x_{\textup{min}}(G_n) )^{\alpha} \big)\lambda^{(\alpha)}(G_n).$

$(2)$  $(x_{\textup{min}}(G_n))^{\alpha} \ge \frac{1}{n}\big( 1 - \frac{\hat{\pi}(\mathrm{Col}(P)) M}{r n} \big).$

$(3)$ $\delta(G_n) \ge \hat{\pi}(\mathrm{Col}(P)) \left(1 - \frac{M}{n}\right) \binom{n}{r-1}.$
\end{thm}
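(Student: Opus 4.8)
The plan is to treat the three parts in turn, using that $Col(P)$ is hereditary and multiplicative (Fact A in the Appendix), hence flat by Lemma~\ref{flat}, with $\pi:=\pi(Col(P))>0$; throughout I write $\lambda_n:=\lambda^{(\alpha)}(Col(P),n)$ and $a_n:=(x_{\textup{min}}(G_n))^{\alpha}$. I would also record that $Col(P)$ is \emph{clonal}, since recolouring a vertex as a clone of another preserves $P$-colourability.

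For part~$(1)$ the idea is a colour-class clone. Since adding every pattern-consistent edge keeps a graph $P$-colourable and cannot decrease $\lambda^{(\alpha)}$, I may assume $G_n$ is a complete blow-up of $P$ with colour classes $V_1,\dots,V_l$ of sizes $m_1,\dots,m_l$ (such a blow-up is again extremal in $Col(P)_n$); then by Lemma~\ref{aut} some principal eigenvector $\mathbf{x}$ is constant, say $w_i$, on each $V_i$. I pick a largest nonempty class $V_j$, so $m_j\ge n/l$, and form $G_{n+1}\in Col(P)_{n+1}$ by placing one more vertex in $V_j$. As a test vector I redistribute the weight of $V_j$ evenly over its $m_j+1$ vertices, i.e. replace $w_j$ by $w_j(m_j/(m_j+1))^{1/\alpha}$ and keep all other values, which preserves $\|\cdot\|_{\alpha}=1$. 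Evaluating $P_{G_{n+1}}$ on this vector, each multiset type of $E$ in which colour $j$ occurs with multiplicity $i$ now contributes $\binom{m_j+1}{i}(m_j/(m_j+1))^{i/\alpha}$ in place of $\binom{m_j}{i}$; Lemma~\ref{t5} bounds the resulting per-type increment below by $\binom{m_j-1}{i-1}(1-\tfrac1\alpha-\tfrac{1}{\alpha(m_j-r+1)})$ times the remaining factors. Summing over all types containing colour $j$ and recognising the eigenequation at a vertex of $V_j$, the total increment collapses to $r\lambda_n w_j^{\alpha}(1-\tfrac1\alpha-\tfrac{1}{\alpha(m_j-r+1)})$. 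Finally $m_j\ge n/l$ gives $\tfrac{1}{m_j-r+1}\le \tfrac{l}{n-lr+l}$, and $w_j\ge x_{\textup{min}}(G_n)$ gives $(1)$, since $\lambda^{(\alpha)}(G_{n+1})=\lambda_{n+1}\ge P_{G_{n+1}}(\mathbf{y})$.

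For part~$(2)$ I would combine $(1)$ with a deletion estimate. Removing a minimum-eigenvector vertex of $G_n$ yields a member of $Col(P)_{n-1}$ and, exactly as in Lemma~\ref{t4}, $\lambda_{n-1}\ge \lambda_n(1-ra_n)/(1-a_n)^{r/\alpha}$, so that $\lambda_n/\lambda_{n-1}$ is bounded above by a function of $a_n$ which is increasing in $a_n$ (Fact~1). Against this I feed the lower bound on $\lambda_n/\lambda_{n-1}$ coming from $(1)$ at index $n-1$, with $a_{n-1}$ controlled inductively, starting from the infinitely many good $n$ supplied by Lemma~\ref{t4}; solving the resulting inequality for $a_n$ produces a bound of the shape $a_n\ge \tfrac1n(1-\tfrac{\pi M}{rn})$. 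The hard part is exactly here: the propagation must be made \emph{uniform in $n$} and sharp to order $n^{-2}$, which forces me to absorb the $l$-dependent correction $\tfrac{l}{\alpha(n-lr+l)}$ of $(1)$ into a single constant $M$ rather than letting it accumulate across the induction, and I expect this to be the most delicate point of the whole argument. (When $\alpha\ge r$ there is a shortcut: $G_n$ then has a Perron--Frobenius eigenvector and, $Col(P)$ being clonal, Lemma~\ref{clonal} gives principal ratio $1+O(n^{-1})$, whence $a_n\ge \tfrac1n(1-O(n^{-1}))$ at once.)

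Part~$(3)$ is then immediate from $(2)$ and Lemma~\ref{t2}: taking $\varepsilon=M/n$ and $\varepsilon'=\pi M/(rn)$, one has $0\le\varepsilon'<\varepsilon\pi/(r-1)$ (this reduces to $1/r<1/(r-1)$) and, by $(2)$, $(x_{\textup{min}}(G_n))^{\alpha}\ge \tfrac1n(1-\varepsilon')$, so Lemma~\ref{t2} gives $\delta(G_n)\ge(1-\varepsilon)\pi\binom{n}{r-1}=\pi(1-M/n)\binom{n}{r-1}$. Thus the whole theorem rests on securing the sharp, uniform lower bound for $x_{\textup{min}}$ in $(2)$, which is where I would concentrate the effort.
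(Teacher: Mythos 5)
Your part~$(1)$ is essentially the paper's own argument (edge-maximal completion, cloning a vertex into a largest colour class, the redistributed test vector, Lemma~\ref{t5}, and collapsing via the eigenequation), but the proposal has a genuine gap exactly where you flag it: part~$(2)$ is never proved. Your main route --- propagating a lower bound on $a_n$ by coupling $(1)$ with the deletion estimate, ``sharp to order $n^{-2}$'' --- is left as a plan, and it is far from clear it can be closed: a fixed-$\varepsilon'$ propagation of this kind (which the paper does carry out later, in the proof of Theorem~\ref{tp}) only yields $a_n\ge(1-\varepsilon')/n$ for a \emph{constant} $\varepsilon'$, not the $\tfrac1n(1-O(n^{-1}))$ needed here. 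The paper instead proves $(2)$ by exactly your parenthetical ``shortcut,'' extended to all $\alpha>1$: the missing idea is a positivity claim (Claim~2 in the paper) asserting that for large $n$ \emph{every} principal eigenvector of the extremal graph is strictly positive. This follows from part~$(1)$ itself: Lemma~\ref{t4} supplies one large $n_1$ with $x_{\textup{min}}(H_{n_1})>0$, whence Claim~1 gives strict growth $\lambda_{n_1+1}>\lambda_{n_1}$; if some principal eigenvector of $H_{n_1+1}$ had a zero coordinate, deleting that vertex would produce a graph in $Col(P)_{n_1}$ with $\lambda^{(\alpha)}\ge\lambda_{n_1+1}>\lambda_{n_1}$, a contradiction; induction then propagates positivity to all $n>n_1$. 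With positivity in hand, every principal eigenvector is Perron--Frobenius, $Col(P)$ is clonal (hereditary $+$ multiplicative, by Cooper--Desai--Sahay), and Lemma~\ref{clonal} gives the ratio $1+O(n^{-1})$ and hence $(2)$ for every $\alpha>1$; your restriction of this route to $\alpha\ge r$ is unnecessary, and note that even for $\alpha\ge r$ you need the ratio bound for \emph{all} principal eigenvectors, since $x_{\textup{min}}$ is an infimum over them.

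Two smaller but real defects. First, in $(1)$ your ``I may assume $G_n$ is a complete blow-up'' proves the inequality with $x_{\textup{min}}$ of the completed graph $H_n$, not of the arbitrary extremal $G_n$ in the statement (and the statement is later applied, in Theorem~\ref{tp}, to extremal graphs not known a priori to be edge-maximal); the paper closes this via the same positivity claim together with Lemma~\ref{nonne}, which forces $G_n=H_n$. Second, in $(3)$ you invoke Lemma~\ref{t2} as a black box with $\varepsilon=M/n$ and $\varepsilon'=\pi M/(rn)$, but that lemma's ``$n$ sufficiently large'' depends on the \emph{fixed} pair $(\varepsilon,\varepsilon')$: its proof ends with the contradiction $0<1-\big(\tfrac{1-\varepsilon}{1-\varepsilon'}\big)^{(\alpha-1)/\alpha}\le r^2/n$, and for your $n$-dependent choice the left-hand side is itself $O(1/n)$, so there is no contradiction unless $M$ is large. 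One must re-run the proof with $M$ chosen suitably --- the paper takes $M=\alpha rM_0/\big((\alpha-1)(r-1)\big)$ with $M_0>r^2$, so the final inequality reads $1-r^2/n\le 1-M_0/n$ and genuinely fails. Since enlarging $M$ only weakens $(2)$, this is fixable, but it must be said; as written, the step does not follow from the lemma you cite.
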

\begin{proof}

$(1)$ For any given  $n\in \mathbb{N}$, select $G_{n}\in \mathrm{Col}(P)_{n}$  satisfying $\lambda^{(\alpha)}(G_{n})=\lambda^{(\alpha)}(\mathrm{Col}(P),n)$. Clearly, there exists an edge-maximal $r$-graph $H_n\in \mathrm{Col}(P)_{n}$ such that $G_{n}\subseteq H_{n}$.  Observe that $\lambda^{(\alpha)}(G_{n})\leq\lambda^{(\alpha)}(H_{n})$, and thus $\lambda^{(\alpha)}(G_{n})=\lambda^{(\alpha)}(H_{n})$.
 Let $\phi$ be a homomorphism from $H_{n}$ to $P$.  For any  $i\in [l]$, and vertices $u$, $v\in\phi^{-1}(i)$,  if an edge $e\in E(H_{n})$ contains $u$ but not $v$, then $e-u+v\in E(H_{n})$ by  edge-maximality. Consequently,  transposing  $u$ and $v$ induces an automorphism of $H_{n}$.

Clearly, there exists some $j\in[l]$ such that $m:=|\phi^{-1}(j)|\geq n/l$. For sufficiently large $n$,  $m$ is correspondingly large. Fix a vertex $w\in \phi^{-1}(j)$ and construct an $r$-graph $H_{n}\circ k$  of order $n+1$ obtained from  $H_{n}$ by adding a vertex $k$, with a homomorphism $\phi'$ extending $\phi$  such that $\phi'(k)=\phi(w)$. The edge set is given by: $$E(H_{n}\circ k)=E(H_{n})\cup \{e-v+k: e\in E(H_{n}), v\in  e\cap\phi^{-1}(j)\}.$$
This construction ensures that $H_{n}\circ k$ is $P$-colorable and edge-maximal, and transposing  $k$ and $w$ yields  an automorphism of $H_{n}\circ k$.

Let $\mathbf{x}\in \mathbb{R}^{n}$ be a principal eigenvector of $H_{n}$ and $\mu_n:=\mathbf{x}_{\textup{min}}$.
Define a vector $\mathbf{y}\in \mathbb{R}^{n+1}$   as follows:
$$y_{v}=\left\{
\begin{array}{cl}
\Big(\frac{mx_{w}^{\alpha}}{m+1}\Big)^{1/\alpha}, & \mbox{~if~} v \in \phi^{-1}(j)\cup \{k\};\\
x_{v}, & \mbox{~else}.
\end{array}\right.
$$
By Lemma \ref{aut}, $x_{i}=x_{w}$  for all $i\in \phi^{-1}(j)$, which implies $\sum_{v\in V(H_{n}\circ k)}y_{v}^{\alpha}=1$. Set $\lambda_{n}:=\lambda^{(\alpha)}(\mathrm{Col}(P), n)$; then $\lambda_{ n+1}\geq\lambda^{(\alpha)}(H_{n}\circ k)$.

\begin{myCli}\label{1}
There exists $n_{0}\in \mathbb{N} $ such that $\lambda_{n+1}\geq\big(1+r(1-\frac{1}{\alpha}-\frac{l}{\alpha(n-lr+l)})\mu_{n}^{\alpha}\big)\lambda_{n}$ for all $n> n_{0}$.
\end{myCli}

\begin{proof}
 Partition the edges incident to vertex $w$ into sets $E_{H_{n}}^{(i)}(w)$ defined by $$E_{H_{n}}^{(i)}(w)=\{e\in E_{H_{n}}(w): |e\cap \phi^{-1}(j)|=i\}$$ (which may be empty). For $1\leq i\leq r$, define $M_{i}=\{e\backslash\phi^{-1}(j):
e\in E_{H_{n}}^{(i)}(w)\}$. It follows that  $E_{H_{n}}(w)=\cup_{i=1}^{r}E_{H_{n}}^{(i)}(w)$, and for each $i$ ($1\leq i \leq r$),
$$\sum_{e\in E_{H_{n}}^{(i)}(w)}x_{e\backslash\{w\}}=\binom{m-1}{i-1}x_{w}^{i-1}\sum_{F\in M_{i}}x_{F}.$$
By Lemmas \ref{aut} and \ref{t5}, there exists $n_{0}\in \mathbb{N}$ such that  for all $n>n_0$,
\begin{align*}
\lambda_{n+1}-\lambda_{n}
&\geq P_{H_n\circ k}(\mathbf{y})-P_{H_n}(\mathbf{x})\\
&=r!\sum_{i=1}^{r}\Bigg(\binom{m+1}{i}\Big(\frac{m x_{w}^{\alpha}}{m+1}\Big)^{\frac{i}{\alpha}}-\binom{m}{i}x_{w}^{i}\Bigg)
\sum_{F\in M_{i}}x_{F}\\
&\geq r!\sum_{i=1}^{r}\binom{m-1}{i-1}\Big(1-\frac{1}{\alpha}-\frac{1}{\alpha(m-r+1)}\Big)x_{w}^{i}\sum_{F\in M_{i}}x_{F}\\
&=r!\sum_{i=1}^{r}x_{w}\Big(1-\frac{1}{\alpha}-\frac{1}{\alpha(m-r+1)}\Big)\binom{m-1}{i-1}x_{w}^{i-1}\sum_{F\in M_{i}}x_{F}\\
&=r!x_{w}\Big(1-\frac{1}{\alpha}-\frac{1}{\alpha(m-r+1)}\Big)\sum_{i=1}^{r}\sum_{e\in E_{H_{n}}^{(i)}(w)}x_{e\backslash\{w\}}\\
&=r!x_{w}\Big(1-\frac{1}{\alpha}-\frac{1}{\alpha(m-r+1)}\Big)\sum_{e\in E_{H_{n}}(w)}x_{e\backslash\{w\}}\\
&=r!x_{w}\Big(1-\frac{1}{\alpha}-\frac{1}{\alpha(m-r+1)}\Big)\cdot \frac{\lambda_{n}x_{w}^{\alpha-1}}{(r-1)!}\\
&=r\Big(1-\frac{1}{\alpha}-\frac{1}{\alpha(m-r+1)}\Big)\lambda_{n}x_{w}^{\alpha}\\
&\geq r\Big(1-\frac{1}{\alpha}-\frac{l}{\alpha(n-lr+l)}\Big)\lambda_{n}\mu_n^{\alpha}.
 \end{align*}
 This  yields that  $\lambda_{n+1}\geq\big(1+r(1-\frac{1}{\alpha}-\frac{l}{\alpha(n-lr+l)})\mu_{n}^{\alpha}\big)\lambda_{n}$
as desired.
\end{proof}
\begin{myCli}\label{2}There exists $n_{1}$ such that
for all $n>n_{1}$, every principal eigenvector of $H_{n}$  is strictly positive.\end{myCli}
\begin{proof}
Recall that $\lambda^{(\alpha)}(H_{n})=\lambda^{(\alpha)}(\mathrm{Col}(P),n)$ and $\hat{\pi}(\mathrm{Col}(P))>0$. By  Lemma \ref{t4},   there exists sufficiently large   $n_{1}>n_0$ such that
$$\mu_{n_{1}}^{\alpha}\geq(x_{\textup{min}}(H_{n_1}))^{\alpha}\geq\frac{1}{n_{1}}\Big(1-\frac{\alpha}{(\alpha-1)r\log n_{1}}\Big)>0.$$
By Claim $1$, we have
$$\lambda_{n_{1}+1}-\lambda_{n_{1}}\geq r\Big(1-\frac{1}{\alpha}-\frac{l}{\alpha(n_{1}-lr+l)}\Big)\lambda_{n_{1}}\mu_{n_{1}}^{\alpha}>0.$$
We now show that  every principal eigenvector of  $H_{n_{1}+1}$  is strictly positive. Suppose, for contradiction, that there exist a principal eigenvector $\mathbf{z}$ of $H_{n_{1}+1}$ and a vertex $v\in V(H_{n_{1}+1})$ such that $z_{v}=0$.  Let $\mathbf{z}'$ be the vector obtained from $\mathbf{z}$ by omitting $z_{v}$. Then $\|\mathbf{z}'\|_{\alpha}=1$, and
$$\lambda_{n_{1}}\geq P_{H_{n_{1}+1}-v}(\mathbf{z}')=P_{H_{n_{1}+1}}(\mathbf{z})=\lambda_{n_{1}+1},$$
which contradicts $\lambda_{n_{1}+1}>\lambda_{n_{1}}$.
Proceeding by induction, assume that the
claim holds for some $n\geq n_{1}+1$, so that
every principal eigenvector of  $H_{n}$  must be strictly positive (hence $\mu_{n}^{\alpha}>0$). Claim $1$ implies $\lambda_{n+1}-\lambda_{n}>0$. The same contradiction argument as above shows that every principal eigenvector of  $H_{n+1}$  is strictly positive. By induction, the claim holds for all $n>n_{1}$.
\end{proof}
Claim $2$ and Lemma \ref{nonne}  imply  $G_{n}=H_n$    for  $n>n_{1}$. By Claim $1$, we have
\begin{align*}
\lambda_{n+1}&\geq\Big(1+r\Big(1-\frac{1}{\alpha}-\frac{l}{\alpha(n-lr+l)}\Big)\mu_{n}^{\alpha}\Big)\lambda_{n}\\
&\geq \Big(1+r\Big(1-\frac{1}{\alpha}-\frac{l}{\alpha(n-lr+l)}\Big)( x_{\textup{min}}(H_n) )^{\alpha} \Big)\lambda_{n}\\
&=\Big(1+r\Big(1-\frac{1}{\alpha}-\frac{l}{\alpha(n-lr+l)}\Big)( x_{\textup{min}}(G_n) )^{\alpha} \Big)\lambda_{n}.
\end{align*}

$(2)$ Cooper, Desai  and Sahay \cite[Theorem $4.6$]{CDS2024} established that any   hereditary and multiplicative family $\mathcal{F}$ is clonal. Consequently, $\mathrm{Col}(P)$ is clonal. For  $n>n_{1}$, Lemma \ref{clonal} and Claim $2$ give
$$\gamma(G_{n},\mathbf{x})=\frac{\mathbf{x}_{\textup{max}}}{\mathbf{x}_{\textup{min}}}=1+O(n^{-1}).$$
Since  $\mathbf{x}_{\textup{max}}\geq n^{-1/\alpha}$, we have
$$(\mathbf{x}_{\textup{min}})^{\alpha}=\frac{(\mathbf{x}_{\textup{max}})^{\alpha}}{1+O(n^{-1})}= (\mathbf{x}_{\textup{max}})^{\alpha}(1-O(n^{-1}))\geq \frac{1}{n}(1-O(n^{-1})).$$
This shows that there exists a constant $M_{0}>r^{2}$ such that $M=\alpha rM_{0}/(\alpha-1)(r-1)$ and $$(\mathbf{x}_{\textup{min}})^{\alpha}\geq \frac{1}{n}(1-\frac{\hat{\pi}(\mathrm{Col}(P))M}{rn}).$$ Consequently, we also have $$(\mathbf{x}_{\textup{min}}(G_{n}))^{\alpha}\geq \frac{1}{n}(1-\frac{\hat{\pi}(\mathrm{Col}(P))M}{rn}).$$

$(3)$ Define $\varepsilon(n)=M/n$ and $\varepsilon'(n)=\hat{\pi}(\mathrm{Col}(P))M/rn$. Then $0\leq\varepsilon'(n)<\varepsilon(n)\hat{\pi}(\mathrm{Col}(P))/(r-1)$.
 Suppose for contradiction that for some $n> n_{1}$, $\delta(G_{n})<\hat{\pi}(\mathrm{Col}(P))(1-\varepsilon(n))\binom{n}{r-1}$. Note that $\hat{\pi}(\mathrm{Col}(P))>0$ and $(\mathbf{x}_{\textup{min}})^{\alpha}\geq \frac{1}{n}(1-\varepsilon'(n))$. We replicate the
 methodology  of  Lemma \ref{t2} to derive the following analogy of inequality (\ref{qj}):
\begin{displaymath}
\begin{split}
1-\frac{r^{2}}{n}&\leq \Big(\frac{1-\varepsilon(n)}{1-\varepsilon'(n)}\Big)^{\frac{\alpha-1}{\alpha}}
=\Big(1-\frac{\varepsilon(n)-\varepsilon'(n)}{1-\varepsilon'(n)}\Big)^{\frac{\alpha-1}{\alpha}}
\leq \Big(1-(\varepsilon(n)-\varepsilon'(n))\Big)^{\frac{\alpha-1}{\alpha}}.\\
\end{split}
\end{displaymath}
Substituting $\varepsilon(n)-\varepsilon'(n)=\frac{(r-\hat{\pi}(\mathrm{Col}(P)))M}{rn}$ and applying Bernouli's inequality,
\begin{displaymath}
\begin{split}
\big(1-(\varepsilon(n)-\varepsilon'(n))\big)^{\frac{\alpha-1}{\alpha}}&\leq 1-\frac{(\alpha-1)(r-\hat{\pi}(\mathrm{Col}(P)))M}{\alpha rn}\\
&\leq1-\frac{(\alpha-1)(r-1)M}{\alpha rn}\\
&=1-\frac{M_{0}}{n}.
\end{split}
\end{displaymath}
Thus $1-\frac{r^{2}}{n}\leq 1-\frac{M_{0}}{n}$, implying $M_{0}\leq r^{2}$. This contradicts $M_{0}>r^{2}$, completing the proof.
\end{proof}

\subsection{ Spectral extremal hypergraphs in $k$-chromatic $r$-graphs}
In \cite{KNY2014}, Kang, Nikiforov  and Yuan  proposed the following conjecture:

 \begin{conj}
Let $k\geq2$, and let $G$ be a $k$-chromatic $r$-graph of order $n>(r-1)k$.  For every $\alpha\geq1$,
\[
\lambda^{(\alpha)}(G)<\lambda^{(\alpha)}(Q^{r}_{k}(n)),
\]
unless $G=Q^{r}_{k}(n)$.
  \end{conj}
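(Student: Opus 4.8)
The plan is to recognise the family of $k$-chromatic $r$-graphs as a pattern-colorable family and then run the machinery of Theorem~\ref{Co}. Concretely, let $P=([k],E)$ be the $r$-pattern whose edge set $E$ consists of all $r$-multisets over $[k]$ except the $k$ constant ones $\{i,\dots,i\}$. An $r$-graph is $k$-chromatic precisely when the map witnessing a colouring sends every edge to a non-constant multiset, so the class of $k$-chromatic $r$-graphs is exactly $Col(P)$. Since $Col(P)$ is hereditary and multiplicative it is flat and clonal, and a routine count (the maximum number of edges is $\binom{n}{r}-\min\sum_i\binom{n_i}{r}$, minimised by convexity at a balanced partition) gives $\pi(Col(P))=1-k^{1-r}$. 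The candidate $Q^r_k(n)$ is exactly the balanced edge-maximal member of $Col(P)$, and the hypothesis $n>(r-1)k$ guarantees each balanced part exceeds $r-1$, so the within-part terms are non-degenerate.

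Next I would fix a spectral-extremal graph, i.e.\ $G_n\in Col(P)_n$ with $\lambda^{(\alpha)}(G_n)=\lambda^{(\alpha)}(Col(P),n)$ for $\alpha>1$, and extract its structure directly from Theorem~\ref{Co}. For $n$ large this already tells us that $G_n$ is edge-maximal, hence a complete $k$-chromatic $r$-graph on a partition $V_1\cup\cdots\cup V_k$ with $|V_i|=n_i$, that its principal eigenvector $\mathbf{x}$ is strictly positive, and that $\delta(G_n)\ge\pi(Col(P))(1-M/n)\binom{n}{r-1}$. Because $G_n$ is complete $k$-chromatic, any two vertices of a common part are swapped by an automorphism, so Lemma~\ref{aut} forces $\mathbf{x}$ to be constant on each $V_i$; write $x_i$ for that common value.

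The core step is to convert the degree bound into balance. In a complete $k$-chromatic $r$-graph the smallest degree is attained in the largest part and equals $\binom{n-1}{r-1}-\binom{n_{\max}-1}{r-1}$, where $n_{\max}=\max_i n_i$. Feeding this into Theorem~\ref{Co}(3), together with $\binom{n-1}{r-1}=(1-\tfrac{r-1}{n})\binom{n}{r-1}$ and $\pi(Col(P))=1-k^{1-r}$, yields $\binom{n_{\max}-1}{r-1}\le(k^{1-r}+O(n^{-1}))\binom{n}{r-1}$, whence $n_{\max}\le n/k+O(1)$. Since $n_{\max}\ge n/k$ and the $n_i$ sum to $n$, each part satisfies $n_i=n/k+O(1)$; this is precisely the asymptotic balance recorded in Theorem~\ref{chro}.

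Closing the gap between ``balanced to within $O(1)$'' and the exact conjecture for all $n>(r-1)k$ and all $\alpha\ge1$ is where I expect the main obstacle. The natural tool is a local exchange: if $n_i\ge n_j+2$, move one vertex from $V_i$ to $V_j$; by Pascal this changes the total edge count by $\binom{n_i-1}{r-1}-\binom{n_j}{r-1}>0$, so at the level of edge counts the balanced partition is strictly optimal. Promoting this to a strict increase of $\lambda^{(\alpha)}$ requires non-asymptotic control of the $x_i$: one would fix the constant-on-parts structure, reduce $\lambda^{(\alpha)}(G_n)$ to the finite optimisation $\max\{P(n_1,\dots,n_k,x_1,\dots,x_k):\sum_i n_i x_i^{\alpha}=1\}$, and show this objective is strictly Schur-concave in $(n_1,\dots,n_k)$. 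The cross terms arising from the elementary symmetric polynomials make strict concavity delicate, and the boundary case $\alpha=1$ — where Perron--Frobenius may fail, $\mathbf{x}$ need not be unique or positive, and Lemma~\ref{aut} does not apply — must be treated separately, likely via a direct Lagrangian (Motzkin--Straus-type) argument. These two points are the essential difficulties separating the clean asymptotic statement from the full conjecture.
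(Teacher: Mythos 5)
You have not proved the statement, and you know it --- but it is worth being precise about where you stand relative to the paper. The statement is the Kang--Nikiforov--Yuan conjecture, which this paper itself does \emph{not} prove: the authors record it as open (verified only for $r=3$ in \cite{KNY2014}) and instead prove the asymptotic balance result, Theorem~\ref{chro}. Your first three paragraphs reproduce that theorem's proof almost verbatim: the same pattern $P=([k],E)$ with the $k$ constant multisets removed, the same computation $\pi(Col(P))=1-k^{1-r}$, the same appeal to Theorem~\ref{Co} (edge-maximality of the extremal graph, strict positivity of the eigenvector, the minimum-degree bound), and the same conversion of $\delta(G_n)=\binom{n-1}{r-1}-\binom{n_{\max}-1}{r-1}$ into $n_i=\frac{n}{k}\pm O(1)$. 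So your partial progress lands exactly where the paper lands, by essentially the same route.

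The gap you flag in your final paragraph is genuine, and your sketched repair would not close it. First, Theorem~\ref{Co} operates only for $\alpha>1$ and $n$ sufficiently large, so nothing built on it can address the full range $n>(r-1)k$ demanded by the conjecture. Second, your exchange argument controls $e(G)$, not $\lambda^{(\alpha)}$: after moving a vertex between parts the optimal weights $(x_1,\ldots,x_k)$ change, and the ``strict Schur-concavity'' of the constrained optimum $\max\{P(n_1,\ldots,n_k,x_1,\ldots,x_k):\sum_i n_ix_i^{\alpha}=1\}$ that you would need is precisely the unresolved content of the conjecture, not a routine verification. Third, the case $\alpha=1$ is not merely ``delicate'': the statement as quoted is false there in general. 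For $r=2$ one has $Q^2_k(n)=T_k(n)$, and by Motzkin--Straus $\lambda^{(1)}(G)=1-1/\omega(G)$, so the $k$-chromatic graph consisting of $K_k$ together with $n-k$ isolated vertices satisfies $\lambda^{(1)}(G)=1-1/k=\lambda^{(1)}(T_k(n))$ while $G\neq Q^2_k(n)$. This is consistent with the paper's own record: Lemma~\ref{kpt} is stated only for $\alpha>1$, and in Corollary~\ref{trl} the $\alpha=1$ equality case is ``$G\supseteq K^r_l$'' rather than ``$G=T^r_l(n)$''. So no ``direct Lagrangian argument'' can rescue strict uniqueness at $\alpha=1$; a correct resolution must either restrict to $\alpha>1$ or weaken the uniqueness claim there.
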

This conjecture has been verified   for    $r=3$ in \cite{KNY2014}. We state that
for $\alpha>1$ and $k\geq2$, the  spectral extremal  $k$-chromatic $r$-graphs asymptotically approach $Q_{k}^{r}(n)$ as $n\to \infty$. Hereafter, the notation $x=y\pm m$ denotes  $y-m\leq x \leq y+m$.
\begin{thm}\label{chro}
Let $\alpha>1$, $k$, $r\geq2$, and let $\mathcal{F}$ be the set of $k$-chromatic $r$-graphs. Let $G_{n}$ be a $k$-chromatic $r$-graph on $n$ vertices with
$\lambda^{(\alpha)}(G_{n})=\lambda^{(\alpha)}(\mathcal{F},n)$, and let $V_{1},\ldots,V_{k}$ be the partition sets of $V(G_{n})$.
Then there exist $n_{0}\in \mathbb{N}$ and $m>0$ such that for all $n\geq n_{0}$, $|V_{i}|= \frac{n}{k}\pm m$ for each $i\in [k]$. In particular,
$\lim\limits_{n\to \infty}\frac{|V_{i}|}{n}=\frac{1}{k}$.
\end{thm}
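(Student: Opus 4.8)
The plan is to identify the family $\mathcal{F}$ of $k$-chromatic $r$-graphs with $Col(P)$ for the $r$-pattern $P=([k],E)$, where $E$ is the set of all $r$-multisets over $[k]$ in which no symbol occurs more than $r-1$ times; a homomorphism $\phi\colon V(G)\to[k]$ then witnesses exactly a $k$-chromatic partition $V_i=\phi^{-1}(i)$, so $\mathcal{F}=Col(P)$ and Theorem~\ref{Co} applies wholesale. In particular, for all large $n$ the extremal $G_n$ coincides with the edge-maximal member of $Col(P)_{n}$ (by the claims in the proof of Theorem~\ref{Co} together with Lemma~\ref{nonne}), hence $G_n$ is the \emph{complete} $k$-chromatic $r$-graph on the parts $V_1,\dots,V_k$. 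Writing $n_i=|V_i|$, every vertex $v\in V_i$ then has degree
\[
d_i:=d_{G_n}(v)=\binom{n-1}{r-1}-\binom{n_i-1}{r-1},
\]
since the only $(r-1)$-subsets $S\subseteq V\setminus\{v\}$ that fail to extend $v$ to an edge are those lying entirely in $V_i\setminus\{v\}$. Moreover Theorem~\ref{Co} supplies a strictly positive principal eigenvector $\mathbf{x}$, the degree bound $\delta(G_n)\ge\pi(1-M/n)\binom{n}{r-1}$ with $\pi:=\pi(Col(P))$, and, via clonality of $Col(P)$ and Lemma~\ref{clonal}, the principal-ratio estimate $\gamma=\mathbf{x}_{\textup{max}}/\mathbf{x}_{\textup{min}}=1+O(n^{-1})$.

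Next I would turn the principal ratio into control of the degrees. For $v\in V_i$ the eigenequation~\eqref{eigen} reads $\lambda\,x_v^{\alpha-1}=(r-1)!\sum_{e\ni v}x_{e\setminus\{v\}}$, and since each of the $d_i$ products $x_{e\setminus\{v\}}$ lies in $[\mathbf{x}_{\textup{min}}^{\,r-1},\mathbf{x}_{\textup{max}}^{\,r-1}]$ while $x_v\in[\mathbf{x}_{\textup{min}},\mathbf{x}_{\textup{max}}]$, I obtain two-sided bounds
\[
\frac{\lambda\,\mathbf{x}_{\textup{min}}^{\alpha-1}}{(r-1)!\,\mathbf{x}_{\textup{max}}^{r-1}}\ \le\ d_i\ \le\ \frac{\lambda\,\mathbf{x}_{\textup{max}}^{\alpha-1}}{(r-1)!\,\mathbf{x}_{\textup{min}}^{r-1}}
\]
that are \emph{uniform in $i$}. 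Dividing the upper bound (attained by the smallest part, i.e.\ largest degree) by the lower bound (largest part, smallest degree) gives $\max_i d_i/\min_i d_i\le\gamma^{\alpha+r-2}=1+O(n^{-1})$. Since $\max_i d_i-\min_i d_i=\binom{n_{\max}-1}{r-1}-\binom{n_{\min}-1}{r-1}$ and $\min_i d_i=O(n^{r-1})$, where $n_{\max}=\max_i n_i$ and $n_{\min}=\min_i n_i$, this yields
\[
\binom{n_{\max}-1}{r-1}-\binom{n_{\min}-1}{r-1}=O(n^{r-2}).
\]

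To extract an \emph{additive} bound on the part sizes I first secure $n_{\min}=\Theta(n)$. A convexity/asymptotic computation gives $\pi=1-k^{1-r}$, so $\delta(G_n)=\binom{n-1}{r-1}-\binom{n_{\max}-1}{r-1}\ge\pi(1-M/n)\binom{n}{r-1}$ forces $\binom{n_{\max}-1}{r-1}\le(1-\pi+o(1))\binom{n}{r-1}$, i.e.\ $n_{\max}\le\frac{n}{k}(1+o(1))$; combined with $\sum_i n_i=n$ this gives $n_{\min}\ge\frac{n}{k}(1-o(1))$, which already proves $\lim_n n_i/n=1/k$. Now the telescoping identity
\[
\binom{n_{\max}-1}{r-1}-\binom{n_{\min}-1}{r-1}=\sum_{s=n_{\min}}^{n_{\max}-1}\binom{s-1}{r-2}\ \ge\ (n_{\max}-n_{\min})\binom{n_{\min}-1}{r-2},
\]
together with $\binom{n_{\min}-1}{r-2}=\Theta(n^{r-2})$ and the $O(n^{r-2})$ bound above, forces $n_{\max}-n_{\min}=O(1)$. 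As the parts average to $n/k$, every $n_i$ then lies within this constant spread of $n/k$, giving $|V_i|=\frac{n}{k}\pm m$.

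I expect the main obstacle to be the passage from multiplicative to additive control: the principal-ratio and degree-ratio estimates are only relative (factors $1+O(n^{-1})$), and converting the resulting $O(n^{r-2})$ bound on the binomial difference into an $O(1)$ bound on $n_{\max}-n_{\min}$ genuinely requires the separate input $n_{\min}=\Theta(n)$. Thus the careful bookkeeping of which errors are relative and which are absolute, together with the derivation of the exact density $\pi=1-k^{1-r}$ feeding the degree bound, is where the argument must be handled with the most care.
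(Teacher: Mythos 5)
Your proof is correct, and it shares the paper's foundation: the same pattern $P=([k],E)$ with $Col(P)=\mathcal{F}$, the computation $\pi(Col(P))=1-1/k^{r-1}$, the fact (from the proof of Theorem \ref{Co}) that $G_n$ is the complete $k$-chromatic $r$-graph on its parts, and the minimum-degree bound of Theorem \ref{Co}(3) applied to $\delta(G_n)=\binom{n-1}{r-1}-\binom{n_{\max}-1}{r-1}$. Where you diverge is the passage to the additive $O(1)$ bound. The paper gets it from the degree bound alone: since the error factor in Theorem \ref{Co}(3) is $1-M/n$, i.e.\ a \emph{relative} error of $O(n^{-1})$ rather than $o(1)$, one obtains $\binom{n_{\max}-1}{r-1}\le \frac{n^{r-1}}{k^{r-1}(r-1)!}\bigl(1+O(n^{-1})\bigr)$, hence $n_{\max}\le n/k+O(1)$ directly; then $n\le (k-1)n_{\max}+n_{\min}$ yields $n_{\min}\ge n/k-O(1)$ and the proof ends. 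You instead use that inequality only in its weakened $o(1)$ form (getting $n_{\min}=\Theta(n)$) and then bring in heavier machinery --- clonality of $Col(P)$, the principal-ratio estimate $\gamma=1+O(n^{-1})$ of Lemma \ref{clonal}, the eigenequation to convert it into $\max_i d_i/\min_i d_i\le\gamma^{\alpha+r-2}$, and the telescoping bound $(n_{\max}-n_{\min})\binom{n_{\min}-1}{r-2}\le \binom{n_{\max}-1}{r-1}-\binom{n_{\min}-1}{r-1}=O(n^{r-2})$ --- to recover $n_{\max}-n_{\min}=O(1)$. Every step of this detour checks out, but it is avoidable: the ``multiplicative-to-additive obstacle'' you flag dissolves once the $O(n^{-1})$ relative error in the degree bound is retained, which is precisely what the paper does. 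What your route buys is a reusable chain (eigenvector regularity $\Rightarrow$ degree regularity $\Rightarrow$ part-size regularity) that does not depend on the precise form of the error term; what it costs is the reliance on Lemma \ref{clonal}, which the paper's argument for this theorem never needs.
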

\begin{proof}
Let  $P=([k],E)$ be an $r$-pattern with $E=[k]^{r}\backslash\cup_{i=1}^{k}\{\{i,\ldots,i\}\}$, where $[k]^{r}$ is the set of all $r$-multisets of elements from $[k]$. Then $\mathrm{Col}(P)=\mathcal{F}$. By a straight calculation, we get  $$\hat{\pi}(\mathrm{Col}(P))=\lim\limits_{n\to \infty}\frac{e(Q_{k}^{r}(n))}{\binom{n}{r}}=1-\frac{1}{k^{r-1}}.$$
By part $(3)$ of Theorem \ref{Co}, there exist $M>0$ and sufficiently large $n_{0}$  such that for all $n\geq n_{0}$,
$$\delta(G_{n})\geq \Big(1-\frac{1}{k^{r-1}}\Big)\Big(1-\frac{M}{n}\Big)\binom{n}{r-1}.$$

Assume that $|V_{i}|=n_{i}$  for $i\in [k]$  and  $n_{1}\geq\cdots\geq n_{k}\geq0$.
From the proof  of Theorem \ref{Co}, we  know that $G_{n}$ must be a complete $k$-chromatic $r$-graph, and so

$$\delta(G_{n})=\binom{n-1}{r-1}-\binom{n_{1}-1}{r-1}\geq \Big(1-\frac{1}{k^{r-1}}\Big)\Big(1-\frac{M}{n}\Big)\binom{n}{r-1}.$$
This implies that
\begin{displaymath}
\begin{split}
\binom{n_{1}-1}{r-1}&\leq\binom{n-1}{r-1}-\Big(1-\frac{1}{k^{r-1}}\Big)\Big(1-\frac{M}{n}\Big)\binom{n}{r-1}\\
&=\frac{1}{(r-1)!}(n^{r-1}-O(n^{r-2}))-\frac{1-1/k^{r-1}}{(r-1)!}(n^{r-1}-O(n^{r-2}))\\
&\leq\frac{n^{r-1}}{k^{r-1}(r-1)!}(1+O(n^{-1})),
\end{split}
\end{displaymath}
 and consequently
$$n_{1}^{r-1}\leq \frac{n^{r-1}}{k^{r-1}}(1+O(n^{-1})).$$
Note that $n_{1}\geq n/k$, we have
$$\frac{n}{k}\leq n_{1}\leq \frac{n}{k}+O(1).$$

On the other hand, we have
$$n=n_{1}+\cdots+n_{k}\leq (k-1)n_{1}+n_{k}\leq \frac{(k-1)n}{k}+O(1)+n_{k},$$
and hence,
$$\frac{n}{k}-O(1)\leq n_{k}\leq\cdots \leq n_{1}\leq \frac{n}{k}+O(1).$$
This completes the proof of Theorem \ref{chro}.
\end{proof}

\section{Spectral Tur\'an-type problems with degree stability}

Building on  Theorem \ref{Co}, certain spectral Tur\'an-type extremal problems admit resolution through degree stability. In subsequent discussions, when addressing spectral Tur\'an-type or classical Tur\'an extremal problems for a family $\mathcal{F}$ of $r$-graphs, we consistently assume that all members of $\mathcal{F}$ contain no isolated vertices.

\begin{defi}[Degree stability\cite{LRM2023}]
Let $\mathcal{F}$ be a  family of $r$-graphs with $\pi(\mathcal{F})>0$, where $r\geq 2$,  and let $\mathfrak{H}$ be a family of $\mathcal{F}$-free $r$-graphs. We say
$\mathcal{F}$ is \emph{degree-stable} with respect  to $\mathfrak{H}$  if  there exist constants $n_{0}$ and $\varepsilon>0$ such that  every $\mathcal{F}$-free $r$-graph $\mathcal{H}$ on $n\geq n_{0}$ vertices with $\delta(\mathcal{H})\geq (\pi(\mathcal{F})/(r-1)!-\varepsilon)n^{r-1}$ is a member of $\mathfrak{H}$.
\end{defi}

The classical Andr\'asfai-Erd\H{o}s-S\'os  Theorem \cite{APS1974} states that for any integer $l\geq2$, the complete graph $K_{l+1}$ is degree-stable with respect to $\mathrm{Col}(K_{l})$.  Tur\'an  problems are closely related to the phenomenon of stability, and usually can be solved    by  stability results of the corresponding graphs or hypergraphs. The stability properties of hypergraphs  have attracted much attention; for details, see \cite{BIJ2017,FS2005,KS2005,LRM2023,MO2006,SL2018}. Now the following result can be viewed as another demonstration of the power of stability phenomenon in spectral Tur\'an-type extremal problems.

\begin{thm}\label{tp}
Let $\alpha>1$ and $r\geq 2$. Let $P$ be an $r$-pattern, and let $\mathcal{F}$ be  a family of $r$-graphs that is degree-stable with respect to  $\mathrm{Col}(P)$. Then for all sufficiently large $n$ and any $n$-vertex $\mathcal{F}$-free $r$-graph $G$, we have \[\lambda^{(\alpha)}(G)\leq\lambda^{(\alpha)}(\mathrm{Col}(P),n),\]
where equality holds only if $G$ is $P$-colorable.
\end{thm}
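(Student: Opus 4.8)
The plan is to show that the extremal configuration is forced to be $P$-colorable, which then pins down the spectral radius. Observe first that since degree stability posits $Col(P)$ as a family of $\mathcal{F}$-free $r$-graphs, we have $Col(P)\subseteq Mon(\mathcal{F})$, so $\lambda^{(\alpha)}(Col(P),n)\leq\lambda^{(\alpha)}(Mon(\mathcal{F}),n)$ for every $n$; it therefore suffices to prove the reverse inequality together with the assertion that any $\mathcal{F}$-free $r$-graph attaining $\lambda^{(\alpha)}(Mon(\mathcal{F}),n)$ is $P$-colorable. I would first record the density identity $\pi(Col(P))=\pi(Mon(\mathcal{F}))=:\pi$: the inequality ``$\leq$'' is immediate from $Col(P)\subseteq Mon(\mathcal{F})$, while ``$\geq$'' follows from the degree-stability hypothesis by iteratively deleting minimum-degree vertices from an edge-extremal $\mathcal{F}$-free $r$-graph, since deleting a below-average-degree vertex does not decrease the edge density, so the cleaned subgraph simultaneously has density $\pi(Mon(\mathcal{F}))-o(1)$ and minimum degree above the stability threshold, hence lies in $Col(P)$.

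The core difficulty is to promote the minimum-degree bound for the spectral extremal $\mathcal{F}$-free $r$-graph from ``infinitely many $n$'' (which Lemmas \ref{t4} and \ref{t2}, applied to the hereditary property $Mon(\mathcal{F})$, already deliver) to ``all sufficiently large $n$.'' I would resolve this by verifying the increment hypothesis \eqref{oldcri1} of Theorem \ref{oldcri}, whose delicate condition is precisely what Theorem \ref{Co} now makes checkable. Let $\mathcal{G}_n$ be the collection of $n$-vertex $\mathcal{F}$-free $r$-graphs with $\delta\geq(1-\varepsilon)\pi\binom{n}{r-1}$, as in Theorem \ref{oldcri}. Choosing $\varepsilon$ small enough that the threshold $(1-\varepsilon)\pi\binom{n}{r-1}$ exceeds the degree-stability threshold $(\pi/(r-1)!-\varepsilon_0)n^{r-1}$ for large $n$, degree stability gives $\mathcal{G}_n\subseteq Col(P)_n$, whence $\lambda^{(\alpha)}(\mathcal{G}_n)\leq\lambda^{(\alpha)}(Col(P),n)$. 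Conversely, letting $C_n$ realize $\lambda^{(\alpha)}(C_n)=\lambda^{(\alpha)}(Col(P),n)$, part $(3)$ of Theorem \ref{Co} together with $\pi(Col(P))=\pi$ gives $\delta(C_n)\geq\pi(1-M/n)\binom{n}{r-1}\geq(1-\varepsilon)\pi\binom{n}{r-1}$ for large $n$, so $C_n\in\mathcal{G}_n$ and therefore $\lambda^{(\alpha)}(\mathcal{G}_n)=\lambda^{(\alpha)}(Col(P),n)$.

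With this identification in hand, I would feed parts $(1)$ and $(2)$ of Theorem \ref{Co} into the increment. Indeed,
\begin{align*}
\lambda^{(\alpha)}(\mathcal{G}_n)-\lambda^{(\alpha)}(\mathcal{G}_{n-1})
&=\lambda^{(\alpha)}(Col(P),n)-\lambda^{(\alpha)}(Col(P),n-1)\\
&\geq r\Big(1-\tfrac{1}{\alpha}-\tfrac{l}{\alpha(n-1-lr+l)}\Big)\big(x_{\textup{min}}(C_{n-1})\big)^{\alpha}\,\lambda^{(\alpha)}(Col(P),n-1),
\end{align*}
and substituting $(x_{\textup{min}}(C_{n-1}))^{\alpha}\geq\frac{1}{n-1}(1-\frac{\pi M}{r(n-1)})$ from part $(2)$ together with the bound $\lambda^{(\alpha)}(Col(P),n-1)\geq(1-r^2/(n-1))\pi(n-1)^{r-r/\alpha}$ (from Lemmas \ref{exi} and \ref{t1}), the right-hand side equals $(r-r/\alpha)\pi n^{r-r/\alpha-1}(1-O(1/n))$, which exceeds $(r-r/\alpha)(1-\varepsilon')\pi n^{r-r/\alpha-1}$ once $n$ is large. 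Thus \eqref{oldcri1} holds, and Theorem \ref{oldcri} yields $\lambda^{(\alpha)}(H)\leq\lambda^{(\alpha)}(\mathcal{G}_n)=\lambda^{(\alpha)}(Col(P),n)$ for every $n$-vertex $\mathcal{F}$-free $H$, with equality only when $H\in\mathcal{G}_n\subseteq Col(P)_n$, i.e. only when $H$ is $P$-colorable.

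The step I expect to be most delicate is the simultaneous calibration of the constants: the $\varepsilon$ defining $\mathcal{G}_n$ must be small enough for degree stability to force $\mathcal{G}_n\subseteq Col(P)_n$, yet large enough that the extremal $C_n$—whose minimum degree Theorem \ref{Co} controls only up to the factor $1-M/n$—still lies in $\mathcal{G}_n$. This is exactly where the density identity $\pi(Col(P))=\pi(Mon(\mathcal{F}))$ is indispensable, since without it the minimum degree of $C_n$ could fall short of a threshold built from $\pi(Mon(\mathcal{F}))$. The remaining bookkeeping—tracking the $o(1)$ and $O(1/n)$ terms so that the verified increment genuinely beats $(1-\varepsilon')\pi n^{r-r/\alpha-1}$—is routine given Theorem \ref{Co}.
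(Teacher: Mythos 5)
Your proposal is correct in substance, but it takes a genuinely different route from the paper's proof. The paper never invokes Theorem \ref{oldcri}: it runs a self-contained bootstrapping induction on $n$. Lemma \ref{t4} (applied to the hereditary property $Mon(\mathcal{F})$) supplies a single base index $n_1$ at which the spectral extremal $\mathcal{F}$-free graph $H_{n_1}$ has large minimum eigenvector entry, so Lemma \ref{t2} and degree stability place $H_{n_1}$ in $Col(P)_{n_1}$; the inductive step assumes $H_n\in Col(P)_n$, uses parts (1)--(2) of Theorem \ref{Co} to get a growth estimate for $\lambda^{(\alpha)}(Mon(\mathcal{F}),n+1)$, runs a vertex-deletion contradiction argument (via the analogue of \eqref{e8}) to show $x_{\textup{min}}(H_{n+1})$ is again large, and then applies Lemma \ref{t2} and degree stability once more to conclude $H_{n+1}\in Col(P)_{n+1}$. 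You instead outsource the promotion from ``infinitely many $n$'' to ``all large $n$'' to the black box Theorem \ref{oldcri}, and your observation that Theorem \ref{Co} makes the increment hypothesis \eqref{oldcri1} checkable is exactly in the spirit of the paper's stated motivation; your identification $\lambda^{(\alpha)}(\mathcal{G}_n)=\lambda^{(\alpha)}(Col(P),n)$ and the increment computation are sound. What the paper's route buys is self-containedness and the fact that it only ever needs the one-sided inequality $\pi(Mon(\mathcal{F}))\geq\pi(Col(P))>0$, recovering the equality of densities afterwards as a corollary (Corollary \ref{bound}); what your route buys is brevity, given the earlier criterion from \cite{ZLF2024}.

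The one step you must flesh out is the density identity $\pi(Col(P))=\pi(Mon(\mathcal{F}))$, which your argument genuinely needs (both to put $C_n$ into $\mathcal{G}_n$ and to make the verified increment beat the threshold of \eqref{oldcri1}, which is stated in terms of $\pi(Mon(\mathcal{F}))$), and which the paper's proof never uses. Your one-line justification is incomplete in two ways: the vertices removed in the cleaning process have degree below the stability threshold $(\pi(Mon(\mathcal{F}))/(r-1)!-\varepsilon_0)m^{r-1}$, and one must first observe that this threshold lies below the average degree as long as the current density stays at least $\pi(Mon(\mathcal{F}))$ and the order is large, so that the density invariant is actually maintained; more importantly, you must show the process terminates while the surviving graph still has order at least $cn$ for some constant $c>0$ --- in particular at least the constant $n_0$ from degree stability and tending to infinity with $n$ --- since otherwise the cleaned graph could be too small either for degree stability to apply or to say anything about $\pi(Col(P))$. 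This follows from the standard counting (or density-increment) argument: each deletion at order $i$ removes fewer than $(\pi(Mon(\mathcal{F}))/(r-1)!-\varepsilon_0)i^{r-1}$ edges, equivalently raises the density by $\Omega(\varepsilon_0/i)$, and since the density is at most $1$ the process cannot descend below $cn$ vertices. So the gap is routine to fill, but as written it is a gap.
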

\begin{proof}
Let $H_{n}\in Mon(\mathcal{F})_{n}$  with $\lambda^{(\alpha)}(H_{n})=\lambda^{(\alpha)}(Mon(\mathcal{F}),n)$, and $G_{n}\in \mathrm{Col}(P)_{n}$  with $\lambda^{(\alpha)}(G_{n})=\lambda^{(\alpha)}(\mathrm{Col}(P),n)$. We claim that for all sufficiently large $n$, $H_{n}\in \mathrm{Col}(P)_{n}$, and hence $\lambda^{(\alpha)}(H_{n})=\lambda^{(\alpha)}(G_{n})$.

Since $\mathcal{F}$ is degree-stable with respect to $\mathrm{Col}(P)$,
we have
$$\pi(\mathcal{F})=\hat{\pi}(Mon(\mathcal{F}))\geq \hat{\pi}(\mathrm{Col}(P))>0.$$
 Therefore,
there exist $n_{0}$ and $\varepsilon \in(0,1)$ such that   every $\mathcal{F}$-free $r$-graph $\mathcal{H}$ on $n\geq n_{0}$ vertices with $\delta(\mathcal{H})\geq (\hat{\pi}(Mon(\mathcal{F}))/(r-1)!-\varepsilon)n^{r-1}$ is contained in $\mathrm{Col}(P)$.
 Choose a constant $\varepsilon'\in (0,\varepsilon\hat{\pi}(Mon(\mathcal{F}))/2(r-1))$.
By Lemma \ref{t4},  there exists a sufficiently large $n_{1}>n_{0}$ such that
 $(x_{\textup{min}}(H_{n_{1}}))^{\alpha}\geq(1-\varepsilon')/n_{1}$.
  Lemma \ref{t2} then yields
 $$\delta(H_{n_{1}})\geq (1-\varepsilon/2)\hat{\pi}(Mon(\mathcal{F}))\binom{n_{1}}{r-1}\geq (\hat{\pi}(Mon(\mathcal{F}))/(r-1)!-\varepsilon)n_{1}^{r-1}.$$ This implies that $H_{n_{1}}\in \mathrm{Col}(P)_{n_{1}}$.

Now, assume that  for some $n\geq n_{1}$, $H_{n}\in \mathrm{Col}(P)_{n}$, and hence
 $\lambda^{(\alpha)}(H_{n})=\lambda^{(\alpha)}(G_{n})$. By
 part $(2)$ of Theorem \ref{Co} (since $n_{1}$ was chose sufficiently large), we have
 $(x_{\textup{min}}(H_{n}))^{\alpha}\geq(1-\varepsilon'/2)/n$.
 Furthermore,  part $(1)$ of Theorem \ref{Co} gives
\begin{align}\label{HG}
\begin{split}
 \lambda^{(\alpha)}(H_{n+1})&\geq\lambda^{(\alpha)}(G_{n+1})\\
 &\geq\bigg(1+r\Big(1-\frac{1}{\alpha}
 -\frac{l}{\alpha(n-lr+l)}\Big)(x_{\textup{min}}(H_{n}))^{\alpha}\bigg)\lambda^{(\alpha)}(H_{n})\\
 &\geq\bigg(1+\frac{r}{n}\Big(1-\frac{1}{\alpha}
 -\frac{l}{\alpha(n-lr+l)}\Big)(1-\varepsilon'/2)\bigg)\lambda^{(\alpha)}(H_{n}).
 \end{split}
\end{align}

 Next we  show that
$$(x_{\textup{min}}(H_{n+1}))^{\alpha}\geq \frac{1}{n+1}(1-\varepsilon').$$
Suppose, for contradiction, that
$$(x_{\textup{min}}(H_{n+1}))^{\alpha}<\frac{1}{n+1}(1-\varepsilon').
$$
Then,  there exists
 a principal eigenvector $\mathbf{x}$ of $H_{n+1}$ such that
\begin{equation}\label{i1}
(\mathbf{x}_{\textup{min}})^{\alpha}<\frac{1}{n+1}(1-\varepsilon').
\end{equation}
Applying (\ref{e8}) and (\ref{i1}), Fact $1$, and Bernoulli's inequality, we obtain
\begin{align}\label{i2}
\begin{split}
\frac{\lambda^{(\alpha)}(H_{n})}{\lambda^{(\alpha)}(H_{n+1})}
&\geq\frac{1-r(\mathbf{x}_{\textup{min}})^{\alpha}}{(1-(\mathbf{x}_{\textup{min}})^{\alpha})^{r/\alpha}}\\
&\geq\Big(1-\frac{r(1-\varepsilon')}{n+1}\Big)\Big(1-\frac{1-\varepsilon'}{n+1}\Big)^{-r/\alpha}\\
&\geq\Big(1-\frac{r(1-\varepsilon')}{n+1}\Big)\Big(1+\frac{r(1-\varepsilon')}{\alpha(n+1)}\Big)\\
&= 1-\frac{r(1-1/\alpha)(1-\varepsilon')}{n+1}
-\frac{r^{2}(1-\varepsilon')^{2}}{\alpha(n+1)^{2}}\\
&= 1-\frac{r(1-1/\alpha)(1-\varepsilon')}{n+1}
-o(n^{-1}).
\end{split}
\end{align}
From (\ref{HG}),
\begin{align}\label{i3}
\begin{split}
\frac{\lambda^{(\alpha)}(H_{n})}{\lambda^{(\alpha)}(H_{n+1})}
&\leq \frac{1}{1+r\big(1-1/\alpha-l/\alpha(n-lr+l)\big)\big(1-\varepsilon'/2\big)n^{-1}}\\
&= 1-r\big(1-1/\alpha-l/\alpha(n-lr+l)\big)\big(1-\varepsilon'/2\big)n^{-1}+o(n^{-1})\\
&= 1-r(1-1/\alpha)(1-\varepsilon'/2)n^{-1}+o(n^{-1})\\
&\leq 1-r(1-1/\alpha)(1-\varepsilon'/2)(n+1)^{-1}+o(n^{-1}).
\end{split}
\end{align}
Combining (\ref{i2}) and (\ref{i3}) gives
$$ \frac{r(1-1/\alpha)(1-\varepsilon'/2)}{n+1}
  \leq\frac{r(1-1/\alpha)(1-\varepsilon')}{n+1}
+o(n^{-1}),$$
which implies
$$\frac{r(1-1/\alpha)\varepsilon'}{2(n+1)}<o(n^{-1}).$$
Consequently,
$$\frac{r(1-1/\alpha)\varepsilon'}{2}<o(1).$$
This is a contradiction. Hence,
$$(x_{\textup{min}}(H_{n+1}))^{\alpha}\geq \frac{1}{n+1}(1-\varepsilon').$$

By Lemma \ref{t2} again,  we deduce that
 $$\delta(H_{n+1})\geq (1-\varepsilon/2)\hat{\pi}(Mon(\mathcal{F}))\binom{n+1}{r-1}\geq (\hat{\pi}(Mon(\mathcal{F}))/(r-1)!-\varepsilon)(n+1)^{r-1}.$$
 Thus $H_{n+1}\in \mathrm{Col}(P)_{n+1}$.
By induction on $n$, we prove that $H_{n}\in \mathrm{Col}(P)_{n}$ for all  $n\geq n_{1}$,
and so $\lambda^{(\alpha)}(H_{n})=\lambda^{(\alpha)}(G_{n})$.
This shows that $\lambda^{(\alpha)}(G)\leq\lambda^{(\alpha)}(H_{n})=\lambda^{(\alpha)}(\mathrm{Col}(P),n)$ for all  $n\geq n_{1}$,
completing the proof.
\end{proof}

\noindent {\bf Remark.}
Theorem \ref{tp} provides a more general and versatile reduction framework compared to Theorem \ref{oldcri}. In particular, it eliminates the need to verify the technical spectral increment condition (\ref{oldcri1}), replacing it with the more combinatorial concept of degree-stability.

\medskip{By a similar  method,    Theorem \ref{tp} can be generalized to the case  for multiple $r$-patterns  as follows.}

\begin{thm}\label{mulp}
Let  $\alpha>1$ and $r\geq 2$. Let $P_{i}$ be $r$-patterns for $i\in[t]$, and  $\mathcal{F}$ be a family of $r$-graphs that is degree-stable with respect to  $\cup_{i=1}^{t}\mathrm{Col}(P_{i})$. Then for all sufficiently large  $n$ and any $n$-vertex $\mathcal{F}$-free $r$-graph   $G$,   we have $\lambda^{(\alpha)}(G)\leq \max_{ i\in [t]}\lambda^{(\alpha)}(\mathrm{Col}(P_{i}),n)$. Moreover, if equality holds then $G$ is $P_{i}$-colorable for some $i\in[t]$.
\end{thm}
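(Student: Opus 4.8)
The plan is to adapt the inductive argument in the proof of Theorem~\ref{tp} almost verbatim, the only genuinely new ingredient being the bookkeeping needed to track which of the finitely many patterns currently hosts the extremal graph. Fix $H_{n}\in Mon(\mathcal{F})_{n}$ with $\lambda^{(\alpha)}(H_{n})=\lambda^{(\alpha)}(Mon(\mathcal{F}),n)$. Since the family $\cup_{i=1}^{t}Col(P_{i})$ witnessing degree-stability consists, by definition, of $\mathcal{F}$-free graphs, we have $Col(P_{i})\subseteq Mon(\mathcal{F})$ for every $i$, and hence the ``easy'' inequality $\max_{i\in[t]}\lambda^{(\alpha)}(Col(P_{i}),n)\le\lambda^{(\alpha)}(Mon(\mathcal{F}),n)=\lambda^{(\alpha)}(H_{n})$; the content of the theorem is the reverse inequality together with the structural conclusion. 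I would first run the base case exactly as in Theorem~\ref{tp}: Lemma~\ref{t4} supplies a large $n_{1}$ with $(x_{\textup{min}}(H_{n_{1}}))^{\alpha}\ge(1-\varepsilon')/n_{1}$, Lemma~\ref{t2} then forces $\delta(H_{n_{1}})\ge(\pi(Mon(\mathcal{F}))/(r-1)!-\varepsilon)n_{1}^{r-1}$, and degree-stability yields $H_{n_{1}}\in\cup_{i}Col(P_{i})$, say $H_{n_{1}}\in Col(P_{i_{0}})$. Combined with the easy inequality this pins down $\lambda^{(\alpha)}(H_{n_{1}})=\lambda^{(\alpha)}(Col(P_{i_{0}}),n_{1})=\max_{i}\lambda^{(\alpha)}(Col(P_{i}),n_{1})$, so that $H_{n_{1}}$ is in fact a spectral-extremal graph \emph{inside} $Col(P_{i_{0}})$.

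For the inductive step I would carry the hypothesis ``$H_{n}$ is $P_{j}$-colorable for some index $j=j(n)$ and attains $\max_{i}\lambda^{(\alpha)}(Col(P_{i}),n)$'', which is equivalent to saying $H_{n}$ is a spectral-extremal graph of $Col(P_{j})$. This is precisely the hypothesis under which Theorem~\ref{Co} applies with the single pattern $P_{j}$: part~$(2)$ gives $(x_{\textup{min}}(H_{n}))^{\alpha}\ge\frac1n(1-\varepsilon'/2)$, and part~$(1)$, taking the level-$n$ extremal graph of $Col(P_{j})$ to be $H_{n}$ itself, gives
\[
\lambda^{(\alpha)}(H_{n+1})\ge\lambda^{(\alpha)}(Col(P_{j}),n+1)\ge\Big(1+r\big(1-\tfrac1\alpha-\tfrac{l_{j}}{\alpha(n-l_{j}r+l_{j})}\big)(x_{\textup{min}}(H_{n}))^{\alpha}\Big)\lambda^{(\alpha)}(H_{n}),
\]
where the first inequality uses $Col(P_{j})\subseteq Mon(\mathcal{F})$. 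This is exactly inequality~(\ref{HG}), so the contradiction argument following (\ref{HG}) in Theorem~\ref{tp}---which only uses this growth estimate, (\ref{e8}) and Fact~$1$---goes through unchanged and yields $(x_{\textup{min}}(H_{n+1}))^{\alpha}\ge(1-\varepsilon')/(n+1)$. Lemma~\ref{t2} then produces the degree bound for $H_{n+1}$, and degree-stability places $H_{n+1}$ in some $Col(P_{j'})$; the easy inequality again upgrades this to $\lambda^{(\alpha)}(H_{n+1})=\lambda^{(\alpha)}(Col(P_{j'}),n+1)=\max_{i}\lambda^{(\alpha)}(Col(P_{i}),n+1)$, re-establishing the inductive hypothesis with the possibly different index $j'$. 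Because there are only $t$ patterns, I would fix uniform constants $M=\max_{i}M_{i}$ and $L=\max_{i}l_{i}$, together with a single threshold $n_{1}$ valid for all of them, so that every invocation of Theorem~\ref{Co} and Lemma~\ref{t2} is legitimate no matter which index is active at a given level.

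Running the induction delivers $\lambda^{(\alpha)}(Mon(\mathcal{F}),n)=\lambda^{(\alpha)}(H_{n})=\max_{i}\lambda^{(\alpha)}(Col(P_{i}),n)$ for all $n\ge n_{1}$, which is the claimed bound. For the equality case I would note that the contradiction step bounding $x_{\textup{min}}$ uses only that the top graph attains the extremal value $\lambda^{(\alpha)}(Mon(\mathcal{F}),n+1)$; thus, given any $\mathcal{F}$-free $G$ on $n\ge n_{1}$ vertices with $\lambda^{(\alpha)}(G)=\max_{i}\lambda^{(\alpha)}(Col(P_{i}),n)=\lambda^{(\alpha)}(Mon(\mathcal{F}),n)$, one applies a single inductive step with $H_{n}:=G$ and $H_{n-1}$ the already-constructed extremal graph (known to lie in some $Col(P_{j})$ and be extremal there), concluding $G\in\cup_{i}Col(P_{i})$, i.e.\ $G$ is $P_{i}$-colorable for some $i$. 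The main obstacle is not analytic---the estimates are exactly those of Theorem~\ref{tp}---but organizational: one must check that the single-pattern Theorem~\ref{Co} may be applied at each level to whichever pattern currently hosts $H_{n}$, and that the index is permitted to jump between consecutive $n$ without breaking the growth inequality. This is harmless precisely because ``$P_{j}$-colorable and spectrally extremal among all $P_{i}$-colorable graphs'' is the hypothesis of Theorem~\ref{Co} for $P_{j}$, while the inclusion $Col(P_{j})\subseteq Mon(\mathcal{F})$ lets the $Col(P_{j})$-growth transfer to $H_{n+1}$.
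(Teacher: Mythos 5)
The paper gives no separate proof of Theorem~\ref{mulp}, stating only that it follows ``by a similar method'' to Theorem~\ref{tp}, and your proposal is precisely a correct instantiation of that method: the base case and inductive step are those of Theorem~\ref{tp}, with the only new ingredients being the inclusion $Col(P_{i})\subseteq Mon(\mathcal{F})$ (which follows from the definition of degree stability), uniform constants $M=\max_{i}M_{i}$, $L=\max_{i}l_{i}$ and a common threshold over the finitely many patterns, and the observation that the hosting index may change between levels without affecting the growth estimate. Your handling of the equality case (running one inductive step with $H_{n}:=G$) is also the argument implicit in the paper's proof of Theorem~\ref{tp}, so the proposal is correct and takes essentially the paper's intended approach.
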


The following corollary is an immediate consequence of Lemma  \ref{clonal}, Theorems \ref{Co} and \ref{tp}.

\begin{cor}
Let $\alpha>1$ and $r\geq2$. Let $P$ be an  $r$-pattern, and $\mathcal{F}$ be  a family of $r$-graphs that is degree-stable with respect to  $\mathrm{Col}(P)$. Suppose that $G$ is an $\mathcal{F}$-free $r$-graph  on $n$ vertices satisfying $\lambda^{(\alpha)}(G)=\lambda^{(\alpha)}(Mon(\mathcal{F}),n)$,
and $\mathbf{x}$ is a principle eigenvector of $G$. Then for  sufficiently large $n$,
$$\gamma(G,\mathbf{x})=1+O(n^{-1}).$$
\end{cor}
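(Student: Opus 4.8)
The plan is to chain Theorem~\ref{tp}, the structural content of the proof of Theorem~\ref{Co}, and Lemma~\ref{clonal}. First I would invoke Theorem~\ref{tp}. Since $\mathcal{F}$ is degree-stable with respect to $Col(P)$ and $G$ is an $\mathcal{F}$-free $r$-graph on $n$ vertices with $\lambda^{(\alpha)}(G)=\lambda^{(\alpha)}(Mon(\mathcal{F}),n)$, for all sufficiently large $n$ the extremal hypergraph $G$ is $P$-colorable, i.e.\ $G\in Col(P)_{n}$. Moreover, because the family $Col(P)$ in the degree-stability hypothesis consists of $\mathcal{F}$-free $r$-graphs, one has $Col(P)\subseteq Mon(\mathcal{F})$, whence $\lambda^{(\alpha)}(Col(P),n)\le\lambda^{(\alpha)}(Mon(\mathcal{F}),n)$; combined with $G\in Col(P)_{n}$ this forces $\lambda^{(\alpha)}(G)=\lambda^{(\alpha)}(Mon(\mathcal{F}),n)=\lambda^{(\alpha)}(Col(P),n)$. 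Thus $G$ is in fact a spectral extremal hypergraph \emph{within} the family $Col(P)$, which is exactly the setting of Theorem~\ref{Co}.

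Second, I would transfer the structural facts established around Theorem~\ref{Co}. As noted before that theorem, $Col(P)$ is hereditary and multiplicative, hence flat with $\pi(Col(P))>0$, and by the Cooper--Desai--Sahay result used in part~$(2)$ it is clonal. The key additional input is Claim~$2$ in the proof of Theorem~\ref{Co}: for $n$ sufficiently large, every principal eigenvector of a spectral extremal hypergraph in $Col(P)$ is \emph{strictly positive}. Applying this to $G$ (which we just identified as such an extremal hypergraph), the given principal eigenvector $\mathbf{x}$ is strictly positive, i.e.\ it is a Perron--Frobenius eigenvector of $G$. This is the step that upgrades the merely nonnegative $\mathbf{x}$ of the statement to the Perron--Frobenius eigenvector required by Lemma~\ref{clonal}.

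Finally, with $G\in Col(P)_{n}$ attaining $\lambda^{(\alpha)}(Col(P),n)$, the family $Col(P)$ clonal with $\pi(Col(P))>0$, and $\mathbf{x}$ a Perron--Frobenius eigenvector, Lemma~\ref{clonal} applies verbatim and yields $\gamma(G,\mathbf{x})=1+O(n^{-1})$, as desired. The only real obstacle is the positivity of $\mathbf{x}$: Lemma~\ref{clonal} is stated for a Perron--Frobenius eigenvector, whereas the corollary only assumes a principal (nonnegative) eigenvector, and for $1<\alpha<r$ such positivity is not automatic by \cite{N2014B}. That gap is precisely what the positivity argument embedded in Claim~$2$ of Theorem~\ref{Co} closes, so once this reduction is in place the conclusion is immediate and no further computation is needed.
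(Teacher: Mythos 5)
Your proposal is correct and follows exactly the route the paper intends: the paper derives this corollary as an ``immediate consequence of Lemma~\ref{clonal}, Theorems~\ref{Co} and~\ref{tp},'' which is precisely your chain (Theorem~\ref{tp} to place $G$ in $Col(P)$ as a spectral extremal member, the positivity from Claim~$2$ in the proof of Theorem~\ref{Co} together with Lemma~\ref{nonne} to upgrade the principal eigenvector to a Perron--Frobenius eigenvector, then Lemma~\ref{clonal} for the ratio bound). Your explicit handling of the gap between ``principal'' and ``Perron--Frobenius'' eigenvector is exactly the detail the paper leaves implicit, so nothing is missing.
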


\begin{cor}\label{bound}
Let $\alpha\geq1$ and $r\geq 2$. Let $P$ be an $r$-pattern, and  $\mathcal{F}$ be a family of $r$-graphs that is degree-stable with respect to $\mathrm{Col}(P)$. Then for all   sufficiently large $n$ and any $\mathcal{F}$-free $n$-vertex  $r$-graph   $G$, we have $\lambda^{(\alpha)}(G)\leq\hat{\pi}(\mathrm{Col}(P))n^{r-r/\alpha}$.
Moreover, for $\alpha>1$,
$$\lambda^{(\alpha)}(Mon(\mathcal{F}),n)=\hat{\pi}(\mathrm{Col}(P))n^{r-r/\alpha}-|O(1)|n^{r-r/\alpha-1}.$$
\end{cor}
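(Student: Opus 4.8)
The plan is to split the argument according to whether $\alpha>1$ or $\alpha=1$, and to treat the two-sided asymptotic estimate separately from the plain inequality. Throughout I would use that $Col(P)$ is hereditary and multiplicative, hence flat by Lemma~\ref{flat}, so that $\pi(Col(P))=\lambda^{(1)}(Col(P))>0$; and that degree-stability forces $Col(P)\subseteq Mon(\mathcal{F})$ (every $P$-colorable graph is $\mathcal{F}$-free), whence $\pi(Mon(\mathcal{F}))\ge\pi(Col(P))$.

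First I would dispose of the case $\alpha>1$. Fix a large $n$ and an $\mathcal{F}$-free $r$-graph $G$ on $n$ vertices. By Theorem~\ref{tp} one has $\lambda^{(\alpha)}(G)\le\lambda^{(\alpha)}(Col(P),n)$ for all sufficiently large $n$; moreover the proof of that theorem shows the $\mathcal{F}$-free spectral-extremal graph is itself $P$-colorable, so in fact $\lambda^{(\alpha)}(Mon(\mathcal{F}),n)=\lambda^{(\alpha)}(Col(P),n)$ (dividing by $n^{r-r/\alpha}$ and invoking Lemma~\ref{t1} then also gives $\pi(Mon(\mathcal{F}))=\pi(Col(P))$). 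Since $Col(P)$ is flat, Theorem~\ref{t8} applied to $Col(P)$ yields $\lambda^{(\alpha)}(H)\le\pi(Col(P))n^{r-r/\alpha}$ for every $H\in Col(P)_n$, hence $\lambda^{(\alpha)}(Col(P),n)\le\pi(Col(P))n^{r-r/\alpha}$. Chaining these gives $\lambda^{(\alpha)}(G)\le\pi(Col(P))n^{r-r/\alpha}$, the asserted inequality for $\alpha>1$.

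For the \emph{moreover} part I would supply a matching lower bound. Exactly as in the proof of Lemma~\ref{t2}, Lemmas~\ref{exi} and~\ref{t1} give, for large $n$,
\[
\lambda^{(\alpha)}(Col(P),n)\ge\big(1-r^{2}/n\big)\pi(Col(P))\,n^{r-r/\alpha}=\pi(Col(P))n^{r-r/\alpha}-r^{2}\pi(Col(P))n^{r-r/\alpha-1}.
\]
Combining this with the upper bound and the identity $\lambda^{(\alpha)}(Mon(\mathcal{F}),n)=\lambda^{(\alpha)}(Col(P),n)$ sandwiches $\lambda^{(\alpha)}(Mon(\mathcal{F}),n)$ between $\pi(Col(P))n^{r-r/\alpha}$ and $\pi(Col(P))n^{r-r/\alpha}-r^{2}\pi(Col(P))n^{r-r/\alpha-1}$, which is precisely the claimed $\pi(Col(P))n^{r-r/\alpha}-|O(1)|n^{r-r/\alpha-1}$.

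The boundary case $\alpha=1$, where the target reads $\lambda^{(1)}(G)\le\pi(Col(P))$ (as $n^{r-r/\alpha}=1$), is where I expect the main obstacle. The naive route is to pass to the limit from above: by Lemma~\ref{conti}, $\lambda^{(\alpha)}(G)$ is increasing and continuous in $\alpha$ with $\lambda^{(1)}(G)=\lim_{\alpha\downarrow1}\lambda^{(\alpha)}(G)$, so for each fixed $\alpha>1$ and $n$ beyond the $\alpha$-dependent threshold of Theorem~\ref{tp} we get $\lambda^{(1)}(G)\le\lambda^{(\alpha)}(G)\le\pi(Col(P))n^{r-r/\alpha}$. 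The difficulty is that this threshold grows as $\alpha\downarrow1$, so one cannot let $\alpha\to1$ with $n$ fixed and read off $n^{r-r/\alpha}\to1$; the limiting argument only yields $\lambda^{(1)}(G)\le C\,\pi(Col(P))$ for some constant $C\ge1$. The clean way to close the gap is to prove that $Mon(\mathcal{F})$ is itself flat, i.e. $\lambda^{(1)}(Mon(\mathcal{F}))=\pi(Mon(\mathcal{F}))=\pi(Col(P))$; since $\lambda^{(1)}(G)\le\lambda^{(1)}(Mon(\mathcal{F}),n)\le\lambda^{(1)}(Mon(\mathcal{F}))$ by Lemma~\ref{exi}, flatness gives the bound at once. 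To establish flatness I would exploit that $\lambda^{(1)}(G)=r!\lambda(G)$ is a blow-up density: the balanced blow-up of the Lagrangian-optimal support of $G$ is asymptotically regular with minimum degree $\approx(\pi_{0}/(r-1)!)N^{r-1}$ and edge density tending to $\pi_{0}:=r!\lambda(G)$ (the weighted degree at a Lagrangian optimum equals $r\lambda(G)=\pi_{0}/(r-1)!$ by Euler's identity), so if $\pi_{0}>\pi(Col(P))$ one wants to feed this blow-up into degree-stability and force it into $Col(P)$, contradicting its density. The genuinely delicate point — the crux of the whole corollary — is that a blow-up of an $\mathcal{F}$-free graph need not stay $\mathcal{F}$-free, so one must argue that the created copies of members of $\mathcal{F}$ can be removed by deleting only $o(N)$ vertices (using that the patterns/expansions in play do not embed into proper blow-ups), after which degree-stability applies to the cleaned, still-dense, still-high-minimum-degree subgraph.
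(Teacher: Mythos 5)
For $\alpha>1$, and for the two-sided estimate in the ``moreover'' part, your argument coincides with the paper's: the upper bound is Theorem \ref{tp} combined with Theorem \ref{t8} applied to the flat property $Col(P)$, and the lower bound comes from Lemmas \ref{exi} and \ref{t1} via $\lambda^{(\alpha)}(Mon(\mathcal{F}),n)\geq\lambda^{(\alpha)}(Col(P),n)\geq\pi(Col(P))(n)_{r}/n^{r/\alpha}$; that part of your proposal is correct. The interesting case is $\alpha=1$, and here your diagnosis is sharper than the paper itself: the paper's proof is exactly the ``naive route'' you reject, namely it fixes $n$ and lets $\alpha\to1^{+}$ in \eqref{sr-edgedensity}, ignoring that \eqref{sr-edgedensity} is only guaranteed for $n\geq n_{0}(\alpha)$, where the threshold inherited from Lemma \ref{t4} (one needs $\frac{\alpha}{(\alpha-1)r\log n}$ to be small) blows up as $\alpha\downarrow1$. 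So the step you flag as illegitimate is precisely the step the paper takes.

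However, your proposed repair---proving that $Mon(\mathcal{F})$ is flat, so that $\lambda^{(1)}(Mon(\mathcal{F}))=\pi(Col(P))$---cannot be carried out, because flatness fails in this generality; in fact the $\alpha=1$ assertion of the corollary is itself false for $r\geq3$. Take $r=3$ and $\mathcal{F}=\{K_{4}^{(3)}\}$, which by the paper's own discussion before Theorem \ref{ccg} (Table 1 of \cite{HLZ2024}, $K_{4}$ being $4$-color-critical) is degree-stable with respect to $Col(K^{3}_{3})$, and note $\pi(Col(K^{3}_{3}))=2/9$. The expansion $K_{4}^{(3)}$ has $4+6=10$ vertices, so the complete $3$-graph $K^{3}_{9}$ contains no copy of it; moreover $K_{4}^{(3)}$ is connected and not $3$-partite (its four core vertices would need pairwise distinct parts), so for every $n$ the disjoint union $G_{n}=K^{3}_{9}\sqcup T^{3}_{3}(n-9)$ is $\mathcal{F}$-free. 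Putting all weight on the clique gives $\lambda^{(1)}(G_{n})\geq\lambda^{(1)}(K^{3}_{9})\geq\frac{9\cdot8\cdot7}{9^{3}}=\frac{56}{81}>\frac{2}{9}$, so no threshold $n_{0}$ can validate the claimed bound, and $\lambda^{(1)}(Mon(\mathcal{F}))\geq 56/81>\pi(Col(P))$. This is exactly the ``crux'' you flagged, but it is not a technical hurdle one can engineer around: $\lambda^{(1)}$ is $r!$ times the Lagrangian, a purely local quantity, and a dense graph on fewer than $\nu(K_{4}^{(3)})$ vertices contains no copy of any member of $\mathcal{F}$ at all, so the clean-up step you sketch (deleting $o(N)$ vertices to restore $\mathcal{F}$-freeness) has nothing to act on; your parenthetical hypothesis that the forbidden expansions ``do not embed into proper blow-ups'' is simply unavailable. (For $r=2$ Motzkin--Straus rescues the statement, which is why this phenomenon is invisible in the graph case.) In summary: your proof matches the paper and is correct for $\alpha>1$ and for the asymptotic identity; at $\alpha=1$ you correctly identified the gap, but it is unfixable---the statement, and with it the $\alpha=1$ parts of Corollary \ref{trl} and Theorem \ref{ccg}, is false as written, so both your sketched repair and the paper's own continuity argument fail.
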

\begin{proof}
 Theorems \ref{t8} and \ref{tp} imply that for $\alpha>1$,
\begin{equation}\label{sr-edgedensity}
\lambda^{(\alpha)}(G)\leq\hat{\pi}(\mathrm{Col}(P))n^{r-r/\alpha}.
\end{equation}
Applying  Lemma \ref{conti} and  continuity
 arguments to \eqref{sr-edgedensity}, we have
$$\lambda^{(1)}(G)=\lim\limits_{\alpha\to 1^{+}}\lambda^{(\alpha)}(G)\leq \lim\limits_{\alpha\to 1^{+}}\hat{\pi}(\mathrm{Col}(P))n^{r-r/\alpha}=\hat{\pi}(\mathrm{Col}(P)).$$
Thus the first statement holds true.

Furthermore, combining Lemmas \ref{exi} and \ref{t1} yields  for $\alpha>1$,
\begin{align*}
\begin{split}
\lambda^{(\alpha)}(Mon(\mathcal{F}), n)&\geq\frac{\lambda^{(\alpha)}(\mathrm{Col}(P)){(n)_{r}}}{n^{r/\alpha}}
=\frac{\hat{\pi}(\mathrm{Col}(P)){(n)_{r}}}{n^{r/\alpha}}\\
&\geq \hat{\pi}(\mathrm{Col}(P))n^{r-r/\alpha}-|O(1)|n^{r-r/\alpha-1}.
\end{split}
\end{align*}
Thus the second statement follows from the above result and \eqref{sr-edgedensity}.
\end{proof}

\begin{lem}[\hspace{1sp}\cite{KNY2014}]\label{kpt}
Let $l\geq r\geq2$, and let $G$ be an $l$-partite $r$-graph of order $n$. For all $\alpha>1$,
$$\lambda^{(\alpha)}(G)< \lambda^{(\alpha)}(T^{r}_{l}(n)),$$
unless $G=T^{r}_{l}(n)$.
\end{lem}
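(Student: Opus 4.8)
The plan is to proceed in two stages: reduce to complete $l$-partite $r$-graphs, and then show that among those the balanced one is the unique maximizer. For the first stage, fix a partition $V(G)=V_1\cup\cdots\cup V_l$ witnessing $l$-partiteness and let $G^\ast$ be the complete $l$-partite $r$-graph on the same partition, so $G\subseteq G^\ast$. Adding edges cannot decrease the Lagrangian, so $\lambda^{(\alpha)}(G)\le\lambda^{(\alpha)}(G^\ast)$; and when $G^\ast$ is the balanced graph $T^r_l(n)$, its principal eigenvector is constant on each part by Lemma~\ref{aut} and (as one checks) strictly positive, so Lemma~\ref{nonne} gives $\lambda^{(\alpha)}(G)<\lambda^{(\alpha)}(G^\ast)$ whenever $G\subsetneq G^\ast$. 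Hence it suffices to treat complete $l$-partite $G$ and prove $\lambda^{(\alpha)}(G)<\lambda^{(\alpha)}(T^r_l(n))$ unless the partition is balanced.

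To handle the second stage I would first compress the problem to a finite-dimensional one. Write $n_i=|V_i|$, use Lemma~\ref{aut} to take a principal eigenvector with common value $x_i$ on $V_i$, and set $p_i=n_ix_i$. A complete $l$-partite edge uses one vertex from each of $r$ distinct parts, so
\[
P_G(\mathbf{x})=r!\sum_{S\in\binom{[l]}{r}}\prod_{i\in S}(n_ix_i)=r!\,e_r(p_1,\dots,p_l),
\]
where $e_r$ is the $r$-th elementary symmetric polynomial, while the constraint $\|\mathbf{x}\|_\alpha=1$ reads $\sum_i p_i^\alpha/n_i^{\alpha-1}=1$. Therefore, writing $\Lambda(\mathbf{n})$ for $\lambda^{(\alpha)}(G)$ as a function of the part-size vector $\mathbf{n}=(n_1,\dots,n_l)$,
\[
\Lambda(\mathbf{n})=r!\,\phi(\mathbf{n})^{-r/\alpha},\qquad \phi(\mathbf{n}):=\min_{\mathbf{p}\ge0,\;e_r(\mathbf{p})=1}\ \sum_i\frac{p_i^\alpha}{n_i^{\alpha-1}},
\]
and maximizing $\Lambda$ over partitions of $n$ is equivalent to minimizing $\phi(\mathbf{n})$ subject to $\sum_i n_i=n$.

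The crux is that $\Lambda$ is Schur-concave in $\mathbf{n}$, hence maximized at the balanced vector. From the stationarity (eigen-)equations $e_{r-1}(\widehat{p_i})=c\,x_i^{\alpha-1}$ for a constant $c>0$ (with $\widehat{p_i}$ omitting $p_i$), combined with the identity $e_{r-1}(\widehat{p_i})-e_{r-1}(\widehat{p_j})=(p_j-p_i)\,e_{r-2}(\widehat{p_ip_j})$ and $e_{r-2}(\widehat{p_ip_j})>0$ (valid since $l\ge r$), one reads off $\sgn(x_i-x_j)=\sgn(n_jx_j-n_ix_i)$; in particular $n_i>n_j$ forces $x_i<x_j$. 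On the other hand, the envelope theorem gives $\partial\phi/\partial n_i=-(\alpha-1)x_i^\alpha$, so $\partial\Lambda/\partial n_i$ equals a fixed positive multiple of $x_i^\alpha$. Thus $(n_i-n_j)\big(\partial_i\Lambda-\partial_j\Lambda\big)\le0$, with strict inequality whenever $n_i\ne n_j$, which is exactly strict Schur-concavity. Since a balancing move---shifting one vertex from a largest part to a smallest part whenever two parts differ by at least $2$---strictly lowers the majorization order, it strictly raises $\Lambda$, and the only integer partition that admits no such move is the balanced one defining $T^r_l(n)$; uniqueness follows from the strictness. (Equivalently, at the joint continuous optimum all $x_i$ coincide, whereupon minimizing $\phi$ becomes maximizing $e_r(\mathbf{n})$ under $\sum_i n_i=n$, which Maclaurin's inequality solves uniquely at the balanced point.)

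The step I expect to be most delicate is making the envelope/Schur computation fully rigorous, since it presupposes that the inner minimizer $\mathbf{p}^\ast$ is unique, strictly positive, and varies smoothly with $\mathbf{n}$ so that $\phi$ is differentiable. I would either deduce these regularity facts from strict concavity of $e_r^{1/r}$ on the positive orthant, or avoid them altogether by executing the balancing as a direct discrete comparison: transfer one vertex from a largest to a smallest part, feed the constant-on-parts eigenvector of $G$ into the new graph as a test vector, and show the change in $\lambda^{(\alpha)}$ is strictly positive. The discrete route ultimately reduces to a single scalar inequality of the form $x_l^\alpha\,h(n_l)>x_1^\alpha\,\tilde h(n_1)$, where $h(m)=m\big(1-(m/(m+1))^{\alpha-1}\big)$ and $\tilde h(m)=m\big((m/(m-1))^{\alpha-1}-1\big)$ both tend to $\alpha-1$; here one must show that the favorable gap $x_l>x_1$ beats the $O(1/n)$ discrepancy between $h(n_l)$ and $\tilde h(n_1)$, and this careful estimate is the genuinely technical point. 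A minor preliminary obstacle is verifying that $T^r_l(n)$ has only positive principal eigenvectors, as required for Lemma~\ref{nonne} in the first stage.
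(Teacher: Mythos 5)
The paper never proves Lemma~\ref{kpt}: it is imported as a black box from \cite{KNY2014} (and used in Corollary~\ref{trl}), so there is no internal proof to compare against, and I can only judge your argument on its own merits as an independent proof. Your main (continuous) route is essentially sound. The reduction to complete $l$-partite graphs, the identity $\Lambda(\mathbf{n})=r!\,\phi(\mathbf{n})^{-r/\alpha}$, the stationarity relation $e_{r-1}(\hat p_i)=c\,x_i^{\alpha-1}$ with $c>0$, the implication ``$n_i>n_j$ forces $x_i<x_j$'', and the envelope formula all check out; one even gets the clean identity $\partial\Lambda/\partial n_i=\tfrac{r(\alpha-1)}{\alpha}\Lambda\,x_i^{\alpha}$, so the Schur--Ostrowski quantity $(n_i-n_j)(\partial_i\Lambda-\partial_j\Lambda)$ is strictly negative whenever $n_i\neq n_j$, and a single vertex transfer from a largest to a smallest part strictly increases $\Lambda$ by integrating this derivative along the transfer segment; majorization then gives both optimality and uniqueness of the balanced partition.

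Two caveats, one of which is more serious than you indicate. First, the positivity of every nonnegative principal eigenvector of a complete $l$-partite graph, which you label a ``minor preliminary obstacle'' for Stage 1, is load-bearing in Stage 2 as well: without it you cannot assert that the optimizer $\mathbf{p}^\ast$ is interior (so that the Lagrange equations hold as equalities with $c>0$), nor that $e_{r-2}(\widehat{p_ip_j})>0$. Fortunately it is provable: if a maximizer $\mathbf{x}$ had a zero coordinate at a vertex $v$, the KKT condition at $v$ forces every edge through $v$ to contain a second zero vertex; since $l\geq r$, this implies the support of $\mathbf{x}$ meets at most $r-1$ parts, whence $P_G(\mathbf{x})=0<\lambda^{(\alpha)}(G)$, a contradiction. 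You should also note that partitions with an empty class (a graph can be $l$-partite with fewer than $l$ nonempty classes) need a limiting argument, since $\phi$ degenerates at $n_i=0$. Second, the regularity needed for the envelope step does go through for the reason you sketch: $\{\mathbf{p}\geq 0: e_r(\mathbf{p})\geq 1\}$ is convex by concavity of $e_r^{1/r}$, the objective is strictly convex, so the minimizer is unique and Danskin's theorem yields differentiability of $\phi$. By contrast, your ``discrete'' fallback is not a proof as it stands: it reduces exactly to the unproved inequality $(n_1-1)x_1>n_lx_l$, i.e.\ $p_1-p_l>x_1$, while the stationarity relations only give $p_1>p_l$; closing that quantitative gap is the hard part, so the Schur-concavity route must be the argument of record.
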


Next, we characterize  spectral extremal hypergraphs among all $\mathcal{F}$-free $r$-graphs, where  $\mathcal{F}$ is degree-stable with respect to $\mathrm{Col}(K^{r}_{l})$.  This result generalizes Theorem $3.3$ in \cite{ZLF2024}.

\begin{cor}\label{trl}
Let  $\alpha\geq1$ and $l\geq r\geq 2$.  Let  $\mathcal{F}$ be a family of $r$-graphs that is degree-stable with respect to $\mathrm{Col}(K^{r}_{l})$.
 Then, for all  sufficiently large $n$ and any $n$-vertex $\mathcal{F}$-free  $r$-graph $G$,
 we have $\lambda^{(\alpha)}(G)\leq\lambda^{(\alpha)}(T^{r}_{l}(n))$. The equality holds  if and only if  $G=T^{r}_{l}(n)$ for $\alpha>1$, and if  $ G\supseteq K^{r}_{l}$ for $\alpha=1$.
\end{cor}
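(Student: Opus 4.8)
The plan is to specialize the general machinery to the pattern $P=K^{r}_{l}$, for which $Col(K^{r}_{l})$ is exactly the class of $l$-partite $r$-graphs: a homomorphism to $K^{r}_{l}$ must color the $r$ endpoints of every edge with $r$ distinct elements of $[l]$, so the color classes are independent, and conversely any $l$-partition witnesses such a homomorphism. For $\alpha>1$ I would invoke Theorem~\ref{tp} with this $P$ to get $\lambda^{(\alpha)}(G)\le\lambda^{(\alpha)}(Col(K^{r}_{l}),n)$ for every $n$-vertex $\mathcal{F}$-free $G$ and all large $n$. By definition $\lambda^{(\alpha)}(Col(K^{r}_{l}),n)$ is the largest $\alpha$-spectral radius among $l$-partite $r$-graphs on $n$ vertices, and Lemma~\ref{kpt} says this maximum equals $\lambda^{(\alpha)}(T^{r}_{l}(n))$ and is attained only by $T^{r}_{l}(n)$. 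Combining these gives $\lambda^{(\alpha)}(G)\le\lambda^{(\alpha)}(T^{r}_{l}(n))$. For the equality case, suppose $\lambda^{(\alpha)}(G)=\lambda^{(\alpha)}(T^{r}_{l}(n))=\lambda^{(\alpha)}(Col(K^{r}_{l}),n)$; the equality clause of Theorem~\ref{tp} then forces $G$ to be $K^{r}_{l}$-colorable, i.e. $l$-partite, whereupon Lemma~\ref{kpt} yields $G=T^{r}_{l}(n)$. The converse is clear because $T^{r}_{l}(n)\in Col(K^{r}_{l})$ is $\mathcal{F}$-free.

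For $\alpha=1$ I would first record the identity $\pi(Col(K^{r}_{l}))=\lambda^{(1)}(T^{r}_{l}(n))$. A direct count of the transversal edges of $T^{r}_{l}(n)$ gives $\pi(Col(K^{r}_{l}))=(l)_{r}/l^{r}$; assigning weight $1/l$ to a single vertex in each of the $l$ classes shows $\lambda^{(1)}(T^{r}_{l}(n))\ge r!\binom{l}{r}l^{-r}=(l)_{r}/l^{r}$, while flatness of $Col(K^{r}_{l})$ (Lemma~\ref{flat}) together with $T^{r}_{l}(n)\in Col(K^{r}_{l})$ gives the matching upper bound $\lambda^{(1)}(T^{r}_{l}(n))\le\pi(Col(K^{r}_{l}))$. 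With this identity in hand, the bound $\lambda^{(1)}(G)\le\pi(Col(K^{r}_{l}))=\lambda^{(1)}(T^{r}_{l}(n))$ is exactly the first conclusion of Corollary~\ref{bound}. The sufficiency clause then follows from monotonicity of the Lagrangian under subgraph inclusion: if $G\supseteq K^{r}_{l}$ then, testing the uniform vector on the copy of $K^{r}_{l}$, $\lambda^{(1)}(G)\ge\lambda^{(1)}(K^{r}_{l})\ge (l)_{r}/l^{r}=\lambda^{(1)}(T^{r}_{l}(n))$, and the upper bound forces equality.

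The two ranges are thus handled by the results already in place, so the argument is short; the one point deserving care is conceptual rather than computational, namely why the sharp uniqueness at $\alpha>1$ degenerates to the one-sided condition $G\supseteq K^{r}_{l}$ at $\alpha=1$. The reason is that the Lagrangian (the case $\alpha=1$) is invariant under blow-ups and senses only the densest subgraph: it cannot distinguish $T^{r}_{l}(n)$ from a lone copy of $K^{r}_{l}$ padded with $n-l$ isolated vertices, both of which realize the value $(l)_{r}/l^{r}$. Consequently no uniqueness can hold at $\alpha=1$, and the best available statement is the displayed sufficient condition. The only bookkeeping to watch is that the ``sufficiently large $n$'' thresholds coming from Theorem~\ref{tp} and Corollary~\ref{bound} are taken for the single fixed pattern $K^{r}_{l}$, so that one common threshold serves throughout both cases.
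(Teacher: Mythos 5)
Your proposal is correct and follows essentially the same route as the paper: for $\alpha>1$ it combines Theorem~\ref{tp} with Lemma~\ref{kpt} (including the equality analysis), and for $\alpha=1$ it uses Corollary~\ref{bound} together with the uniform-weight evaluation $\lambda^{(1)}(G)\geq P_{K^{r}_{l}}((1/l,\ldots,1/l))=(l)_{r}/l^{r}$ when $G\supseteq K^{r}_{l}$. Your extra verification that $\lambda^{(1)}(T^{r}_{l}(n))=(l)_{r}/l^{r}$ via flatness, and the remark on why uniqueness degenerates at $\alpha=1$, are details the paper leaves implicit but do not change the argument.
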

\begin{proof}
 Note that  $\mathrm{Col}(K^{r}_{l})$ consists of all $l$-partite $r$-graphs and satisfies $\hat{\pi}(\mathrm{Col}(K^{r}_{l}))=(l)_{r}/l^{r}$.
For $\alpha>1$, the conclusion follows directly from  Theorem \ref{tp} and Lemma \ref{kpt}. Consider now the case $\alpha=1$.
By Corollary \ref{bound}, we have
$\lambda^{(1)}(G)\leq (l)_{r}/l^{r}$.
If  $G$ contains a copy of $K^{r}_{l}$, then $$\lambda^{(1)}(G)\geq \lambda^{(1)}(K^{r}_{l})\geq P_{K^{r}_{l}}((1/l,\ldots,1/l))=(l)_{r}/l^{r},$$ which implies $\lambda^{(1)}(G)=(l)_{r}/l^{r}$. Therefore, $\lambda^{(1)}(G)\leq \lambda^{(1)}(T^{r}_{l}(n))=(l)_{r}/l^{r}$, completing the proof.
\end{proof}

\medskip{ A graph $H$ with $\chi(H)=l$ is called  \emph{color-critical} if  it contains an edge $e$ such that $\chi(H-e)=l-1$.  As shown in Table $1$ of \cite{HLZ2024}, the $r$-expansion of any such color-critical graph $F$ (with $\chi(F)=l+1$)  is degree-stable with respect to $\mathrm{Col}(K^{r}_{l})$. Consequently, by Corollary \ref{trl}, we obtain  the following result.}

\begin{thm}\label{ccg}
Let $\alpha\geq1$, $l\geq r\geq 2$ , and $F$ be a color-critical graph with $\chi(F)=l+1$. Then
there exists $n_{0}$, such that for any $F^{(r)}$-free $r$-graph $G$ on $n>n_{0}$ vertices, $\lambda^{(\alpha)}(G)\leq\lambda^{(\alpha)}(T^{r}_{l}(n))$. The equality holds  if and only if  $G=T^{r}_{l}(n)$ for $\alpha>1$, and if  $ G\supseteq K^{r}_{l}$ for $\alpha=1$.
\end{thm}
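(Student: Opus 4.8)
The plan is to obtain Theorem~\ref{ccg} as a direct specialization of Corollary~\ref{trl} to the singleton family $\mathcal{F}=\{F^{(r)}\}$. Since being $F^{(r)}$-free is precisely being $\{F^{(r)}\}$-free, and the conclusion of Corollary~\ref{trl} is verbatim the conclusion sought here (including the equality cases: for $\alpha>1$ equality holds iff $G=T^{r}_{l}(n)$, while for $\alpha=1$ equality holds whenever $G\supseteq K^{r}_{l}$), the entire burden reduces to verifying that the hypothesis of Corollary~\ref{trl} is met, namely that $\{F^{(r)}\}$ is degree-stable with respect to $Col(K^{r}_{l})$.

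First I would record that this degree-stability input is exactly the content of Table~$1$ in \cite{HLZ2024}: under the standing assumption $l\geq r\geq 2$, the $r$-expansion of any $(l+1)$-color critical graph $F$ is degree-stable with respect to $Col(K^{r}_{l})$. Implicit in the definition of degree stability is the requirement $\pi(Mon(\{F^{(r)}\}))>0$, which I would confirm directly: $T^{r}_{l}(n)\in Col(K^{r}_{l})$ is $F^{(r)}$-free, because an embedding of $F^{(r)}$ into an $l$-partite $r$-graph would force the original vertices of $F$ into distinct parts along each edge, i.e.\ a proper $l$-coloring of $F$, contradicting $\chi(F)=l+1$. Hence $\pi(Mon(\{F^{(r)}\}))\geq\pi(Col(K^{r}_{l}))=(l)_{r}/l^{r}>0$, in agreement with the Mubayi--Pikhurko Tur\'an density.

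With the hypothesis in hand, I would invoke Corollary~\ref{trl} for $\mathcal{F}=\{F^{(r)}\}$ and the fixed $l$: there exists $n_{0}$ so that every $F^{(r)}$-free $r$-graph $G$ on $n>n_{0}$ vertices satisfies $\lambda^{(\alpha)}(G)\leq\lambda^{(\alpha)}(T^{r}_{l}(n))$ for all $\alpha\geq 1$, with the stated equality cases. This completes the argument, as no further analysis of the $\alpha$-spectral radius or of the principal eigenvector is required beyond what Corollary~\ref{trl} (and, through it, Theorem~\ref{tp} and Lemma~\ref{kpt}) already supplies.

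The genuine content, and hence the only real obstacle, lives entirely in the external degree-stability statement \cite{HLZ2024} rather than in any new spectral argument: the reduction engineered in Theorem~\ref{tp} converts degree stability into the spectral comparison automatically. If one wished to make the paper self-contained, the hard step would be re-deriving this combinatorial stability result, an Andr\'asfai--Erd\H{o}s--S\'os-type theorem for expansions of color-critical graphs that pins down the structure of nearly-extremal $F^{(r)}$-free hypergraphs of large minimum degree; granting it as cited, the remaining work is a purely formal specialization.
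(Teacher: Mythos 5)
Your proposal is correct and follows exactly the paper's own route: the paper proves Theorem~\ref{ccg} precisely by citing Table~$1$ of \cite{HLZ2024} for the degree stability of $F^{(r)}$ with respect to $Col(K^{r}_{l})$ and then applying Corollary~\ref{trl}. Your additional check that $T^{r}_{l}(n)$ is $F^{(r)}$-free (so $\pi(Mon(\{F^{(r)}\}))>0$) is a sound, if implicit in the paper, verification of the standing hypothesis of degree stability.
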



\section{Application of spectral method to  classical Tur\'an  problems}

As noted  by Nikiforov \cite{N2014B}, the $\alpha$-spectral radius provides a unified framework  incorporating the Lagrangian, the number of edges, the spectral radius, and related concepts. This enables a cohesive approach to addressing these problems. We demonstrate this principle by resolving
classical edge-Tur\'an extremal problems for specified families of $r$-graphs using the $\alpha$-spectral radius, thereby providing further validation of this idea.

\begin{thm}\label{colore}
    Let $P$ be an $r$-pattern with $r \ge 2$, and let $\mathcal{F}$ be a family of $r$-graphs that is degree-stable with respect to $\operatorname{Col}(P)$. Then for all sufficiently large $n$ and any $n$-vertex $\mathcal{F}$-free $r$-graph $G$, we have
    \[
        e(G) \le ex(\operatorname{Col}(P), n).
    \]
    Moreover, if equality holds, then $G$ is $P$-colorable.
\end{thm}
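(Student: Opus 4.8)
The plan is to obtain the edge bound from the spectral bound of Theorem~\ref{tp} by sending $\alpha\to\infty$. Fix a large $n$ and an $\mathcal{F}$-free $r$-graph $G$ on $n$ vertices. For every $\alpha>1$, Theorem~\ref{tp} gives $\lambda^{(\alpha)}(G)\le\lambda^{(\alpha)}(Col(P),n)$. By Lemma~\ref{conti} the left-hand side tends to $r!\,e(G)$ as $\alpha\to\infty$. For the right-hand side, recall $\lambda^{(\alpha)}(Col(P),n)=\max_{H\in Col(P)_n}\lambda^{(\alpha)}(H)$; since $Col(P)_n$ is a \emph{finite} set of $n$-vertex $r$-graphs, the maximum of finitely many functions commutes with the limit, so Lemma~\ref{conti} yields $\lim_{\alpha\to\infty}\lambda^{(\alpha)}(Col(P),n)=\max_{H\in Col(P)_n}r!\,e(H)=r!\,ex(Col(P),n)$. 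Passing to the limit in the inequality gives $e(G)\le ex(Col(P),n)$.

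For the characterization, suppose $e(G)=ex(Col(P),n)$; I aim to show $G\in Col(P)$ via the degree-stability hypothesis, and the route is to bound $\delta(G)$ from below. Let $v$ be a vertex of minimum degree. Then $G-v$ is an $\mathcal{F}$-free $r$-graph on $n-1$ vertices, so the edge bound already established (applied at $n-1$) gives $e(G-v)\le ex(Col(P),n-1)$. Hence $\delta(G)=e(G)-e(G-v)\ge ex(Col(P),n)-ex(Col(P),n-1)$, and it remains to show this increment is at least $\big(\pi(Col(P))/(r-1)!-\varepsilon\big)n^{r-1}$, where $\varepsilon$ is the constant supplied by degree stability. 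This increment estimate is the step I expect to be the main obstacle.

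I would resolve it using the blow-up structure of extremal $Col(P)$-graphs. Let $B$ be extremal in $Col(P)_{n-1}$, so $e(B)=ex(Col(P),n-1)$; being extremal, $B$ is edge-maximal for some $P$-coloring $\phi$. Pick a maximum-degree vertex $u$ of $B$ and form $B'$ on $n$ vertices by cloning $u$: add a vertex $w$ with $\phi(w)=\phi(u)$ and all edges allowed by the extended coloring. Every edge $\{u\}\cup S$ of $B$ yields an edge $\{w\}\cup S$ of $B'$ (same color multiset, $u\notin S$), so $d_{B'}(w)\ge d_{B}(u)\ge r\,e(B)/(n-1)$, the last step because $u$ has maximum degree hence at least the average degree. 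Since $B'\in Col(P)_n$ and $ex(Col(P),m)/\binom{m}{r}$ is non-increasing with limit $\pi(Col(P))$, I get
\[
ex(Col(P),n)-ex(Col(P),n-1)\ \ge\ \frac{r\,ex(Col(P),n-1)}{n-1}\ \ge\ \frac{r\,\pi(Col(P))\binom{n-1}{r}}{n-1}\ =\ \pi(Col(P))\binom{n-2}{r-1}.
\]
As $\pi(Col(P))\binom{n-2}{r-1}=\big(\pi(Col(P))/(r-1)!\big)n^{r-1}(1-o(1))$, for large $n$ this exceeds $\big(\pi(Col(P))/(r-1)!-\varepsilon\big)n^{r-1}$. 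Finally I would use $\pi(Mon(\mathcal{F}))=\pi(Col(P))$ — which holds since $Col(P)\subseteq Mon(\mathcal{F})$ gives one inequality and the edge bound above gives the reverse — so that $\delta(G)\ge\big(\pi(Mon(\mathcal{F}))/(r-1)!-\varepsilon\big)n^{r-1}$. Degree stability of $\mathcal{F}$ with respect to $Col(P)$ then forces $G\in Col(P)$, completing the proof.
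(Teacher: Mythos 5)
Your argument for the inequality $e(G)\le ex(Col(P),n)$ is exactly the paper's: apply Theorem~\ref{tp} for each $\alpha>1$ and let $\alpha\to\infty$ via Lemma~\ref{conti}, exchanging the limit with the maximum over $Col(P)_{n}$ (the paper does this with a $\varlimsup$ and no comment; your finiteness-of-isomorphism-classes justification is the cleaner way to say the same thing). The genuine difference is the equality case: the paper's written proof ends right after the inequality and never argues the ``moreover'' clause at all, whereas you supply a complete proof of it. Your route --- assume $e(G)=ex(Col(P),n)$, delete a minimum-degree vertex to get $\delta(G)\ge ex(Col(P),n)-ex(Col(P),n-1)$, lower-bound this increment by $\pi(Col(P))\binom{n-2}{r-1}$ by cloning a maximum-degree vertex of an extremal graph in $Col(P)_{n-1}$, identify $\pi(Mon(\mathcal{F}))=\pi(Col(P))$ from the containment $Col(P)\subseteq Mon(\mathcal{F})$ (implicit in the degree-stability definition) together with the edge bound just proved, and then invoke degree stability --- is correct; each step checks out, in particular $r\binom{n-1}{r}/(n-1)=\binom{n-2}{r-1}$, the bound $ex(Col(P),m)\ge\pi(Col(P))\binom{m}{r}$ from Katona--Nemetz--Simonovits monotonicity, and the fact that the cloned vertex inherits at least the average degree $r\,e(B)/(n-1)$. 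So your proposal proves strictly more than the paper's proof as written; what the extra work buys is precisely the characterization of extremal graphs, which cannot be extracted from the limit argument alone (strict spectral inequality for every finite $\alpha$ is compatible with equality of the limits). One caveat you share with the paper: the threshold ``sufficiently large $n$'' in Theorem~\ref{tp} may a priori depend on $\alpha$, while the limiting argument needs the spectral inequality at a \emph{fixed} $n$ for arbitrarily large $\alpha$; neither you nor the authors address this uniformity, so it is not a defect of your proposal relative to the paper, but it is worth flagging.
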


\begin{proof}
    By Theorem~\ref{tp}, for any $\alpha > 1$ and sufficiently large $n$,
    \[
        \lambda^{(\alpha)}(G) \le \lambda^{(\alpha)}(\operatorname{Col}(P), n) = \max_{H \in \operatorname{Col}(P)_n} \lambda^{(\alpha)}(H).
    \]
    Hence
    \[
        \begin{split}
            r!\, e(G) &= \lim_{\alpha \to \infty} \lambda^{(\alpha)}(G)
                        \le \varlimsup_{\alpha \to \infty} \max_{H \in \operatorname{Col}(P)_n} \lambda^{(\alpha)}(H) \\
                      &= \max_{H \in \operatorname{Col}(P)_n} \varlimsup_{\alpha \to \infty} \lambda^{(\alpha)}(H)
                       = \max_{H \in \operatorname{Col}(P)_n} r!\, e(H),
        \end{split}
    \]
    which yields
    \begin{equation}\label{ecol-1}
        e(G) \le ex(\operatorname{Col}(P), n).
    \end{equation}

    Now assume equality holds in \eqref{ecol-1}. Choose a vertex $v \in V(G)$ with $d_G(v)=\delta(G)$. Then for sufficiently large $n$,
    \[
        \begin{split}
            \delta(G) &= e(G) - e(G-v) \\
                      &= ex(\operatorname{Col}(P), n) - e(G-v) \\
                      &\ge ex(\operatorname{Col}(P), n) -ex(\operatorname{Col}(P), n-1).
        \end{split}
    \]

    Next we estimate a lower bound for $ex(\operatorname{Col}(P), n) - ex(\operatorname{Col}(P), n-1)$.
    Let $H \in \operatorname{Col}(P)$ be an $(n-1)$-vertex $r$-graph with $e(H) = ex(\operatorname{Col}(P), n-1)$, and let $\Delta(H)$ be its maximum degree. Since the sequence $ex(\operatorname{Col}(P), m)\big/\binom{m}{r}$ is decreasing and converges to $\hat{\pi}(\operatorname{Col}(P))$, we have
    \[
        \begin{split}
            \Delta(H) &\ge \frac{r \cdot ex(\operatorname{Col}(P), n-1)}{n-1} \\
                      &\ge \frac{r \hat{\pi}(\operatorname{Col}(P)) \binom{n-1}{r}}{n-1} \\
                      &= \hat{\pi}(\operatorname{Col}(P)) \binom{n-2}{r-1}.
        \end{split}
    \]

    Take $w \in V(H)$ with $d_H(w)=\Delta(H)$. Construct an $n$-vertex $r$-graph $H'$ from $H$ by adding a new vertex $w'$ and, for each edge $e \in E_H(w)$, adding the edge $e + w' - w$. Clearly $H'$ is also $P$-colorable. Therefore
    \[
        \begin{split}
            \Delta(H) = d_{H'}(w') &= e(H') - e(H) \\
                                   &= e(H') - ex(\operatorname{Col}(P), n-1) \\
                                   &\le ex(\operatorname{Col}(P), n) - ex(\operatorname{Col}(P), n-1).
        \end{split}
    \]
    Consequently,
    \[
        \delta(G) \ge ex(\operatorname{Col}(P), n) - ex(\operatorname{Col}(P), n-1)
                   \ge \hat{\pi}(\operatorname{Col}(P)) \binom{n-2}{r-1}.
    \]

    Because $\mathcal{F}$ is degree-stable with respect to $\operatorname{Col}(P)$, there exist $n_0$ and $\varepsilon \in (0,1)$ such that every $\mathcal{F}$-free $r$-graph $\mathcal{H}$ on $n \ge n_0$ vertices with
    \[
        \delta(\mathcal{H}) \ge \Bigl(\frac{\pi(\mathcal{F})}{(r-1)!} - \varepsilon\Bigr) n^{r-1}
    \]
    belongs to $\operatorname{Col}(P)$. Noting that $\pi(\mathcal{F}) = \hat{\pi}(\operatorname{Col}(P))$, we obtain for sufficiently large $n$ that
    \[
        \delta(G) \ge \hat{\pi}(\operatorname{Col}(P)) \binom{n-2}{r-1}
                   \ge \Bigl(\frac{\hat{\pi}(\operatorname{Col}(P))}{(r-1)!} - \varepsilon\Bigr) n^{r-1}.
    \]
    Hence $G \in \operatorname{Col}(P)$, which completes the proof.
\end{proof}

The following generalization follows similarly, and its proof is omitted.

\begin{thm}
Let $P_{i}$ be $r$-patterns for $i\in[t]$, where $r\geq 2$.  Let $\mathcal{F}$ be a family of $r$-graphs that is degree-stable with respect to  $\cup_{i=1}^{t}\mathrm{Col}(P_{i})$. Then for all sufficiently large $n$ and any $n$-vertex $\mathcal{F}$-free  $r$-graph $G$,   we have $e(G)\leq \max_{ i\in [t]}ex(\mathrm{Col}(P_{i}),n)$.
\end{thm}

By Theorem \ref{colore}, we obtain the following result.

\begin{cor}\label{corr-2}
Let  $l\geq r\geq 2$, and  $\mathcal{F}$ be a family of $r$-graphs that is degree-stable with respect to $\mathrm{Col}(K^{r}_{l})$.
Then, for all  sufficiently large $n$ and any $n$-vertex $\mathcal{F}$-free  $r$-graph $G$,
we have $e(G)\leq e(T^{r}_{l}(n))$. The equality holds  if and only if  $G=T^{r}_{l}(n)$
\end{cor}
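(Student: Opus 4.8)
The plan is to derive the inequality by passing to the limit $\alpha\to\infty$ in Corollary~\ref{trl}, and then to recover the equality characterization separately, since the limit destroys the strictness information. First I would fix $n$ large enough that Corollary~\ref{trl} applies; its threshold is governed only by $\mathcal{F}$ and the degree-stability constants, not by $\alpha$, so for this fixed $n$ the bound $\lambda^{(\alpha)}(G)\le\lambda^{(\alpha)}(T^{r}_{l}(n))$ holds simultaneously for every $\alpha>1$ (this is exactly the regime in which Theorem~\ref{colore} is argued). Invoking Lemma~\ref{conti}, which gives $\lim_{\alpha\to\infty}\lambda^{(\alpha)}(H)=r!\,e(H)$ for any $r$-graph $H$, and letting $\alpha\to\infty$, I would obtain
\[
r!\,e(G)=\lim_{\alpha\to\infty}\lambda^{(\alpha)}(G)\le\lim_{\alpha\to\infty}\lambda^{(\alpha)}(T^{r}_{l}(n))=r!\,e(T^{r}_{l}(n)),
\]
and hence $e(G)\le e(T^{r}_{l}(n))$. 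This disposes of the inequality.

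The equality case is where the real work lies, because the strict inequality of Corollary~\ref{trl} (valid for $G\ne T^{r}_{l}(n)$ and $\alpha>1$) degenerates to a weak one in the limit, so the limiting argument by itself cannot detect uniqueness. To circumvent this I would first record that $e(T^{r}_{l}(n))=ex(Col(K^{r}_{l}),n)$: indeed $Col(K^{r}_{l})$ is precisely the family of $l$-partite $r$-graphs, and the balanced complete $l$-partite $r$-graph maximizes the edge count within this family. Consequently, assuming $e(G)=e(T^{r}_{l}(n))=ex(Col(K^{r}_{l}),n)$, the ``Moreover'' clause of Theorem~\ref{colore}, applied with $P=K^{r}_{l}$, forces $G$ to be $K^{r}_{l}$-colorable, that is, $l$-partite.

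It then remains to show that an $l$-partite $r$-graph $G$ with $e(G)=e(T^{r}_{l}(n))$ must in fact equal $T^{r}_{l}(n)$. Writing $V(G)=V_{1}\cup\cdots\cup V_{l}$ for the associated partition, one has $e(G)\le\sum|V_{i_{1}}|\cdots|V_{i_{r}}|$, the sum ranging over all $r$-subsets $\{i_{1},\dots,i_{r}\}$ of $[l]$, with equality exactly when $G$ is the complete $l$-partite $r$-graph on this partition; a standard majorization/convexity argument then shows this symmetric product sum is uniquely maximized by the balanced partition. Thus attaining $e(T^{r}_{l}(n))$ forces both completeness and balance, giving $G=T^{r}_{l}(n)$. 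I expect the main obstacle to be precisely this equality analysis: the limit $\alpha\to\infty$ is lossy, so one must re-enter through the edge-extremal result of Theorem~\ref{colore} and supply the (routine but essential) uniqueness of the balanced complete $l$-partite maximizer.
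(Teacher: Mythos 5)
Your proposal is correct, and for the inequality it is exactly the paper's route: the paper disposes of this corollary with the single sentence ``By letting $\alpha\to\infty$ in Corollary \ref{trl}, we obtain the following result,'' i.e.\ the same application of Lemma \ref{conti} to the bound $\lambda^{(\alpha)}(G)\le\lambda^{(\alpha)}(T^{r}_{l}(n))$. Where you genuinely diverge is the equality case, and your extra work there is not optional: as you correctly observe, the strict inequality of Corollary \ref{trl} for $G\ne T^{r}_{l}(n)$ collapses to a weak one in the limit, so the paper's one-line derivation does not actually justify the ``only if'' direction, whereas your detour --- identifying $e(T^{r}_{l}(n))=ex(Col(K^{r}_{l}),n)$, invoking the ``Moreover'' clause of Theorem \ref{colore} with $P=K^{r}_{l}$ to force $G$ to be $l$-partite, and then using the strict optimality of the balanced complete $l$-partite $r$-graph among $l$-partite $r$-graphs --- supplies the missing step. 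Two caveats are worth recording. First, taking $\alpha\to\infty$ at a fixed $n$ requires the threshold in Corollary \ref{trl} to be independent of $\alpha$, or at least bounded along some sequence $\alpha\to\infty$; you flag this assumption explicitly, while the paper makes it silently in the proof of Theorem \ref{colore}, and neither text verifies it against the $\alpha$-dependent estimates (Lemma \ref{t4}, Theorem \ref{Co}) underlying Theorem \ref{tp}. Second, the ``Moreover'' clause of Theorem \ref{colore} on which your uniqueness argument rests is itself only asserted in the paper --- the displayed proof of Theorem \ref{colore} establishes just the inequality $e(G)\le ex(Col(P),n)$ --- so your equality analysis is conditional on that clause; citing it is legitimate, but the debt is not discharged elsewhere in the paper, so in that sense your proof is as complete as the paper's framework currently allows, and strictly more complete than the paper's own proof of this corollary.
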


 In \cite{MV2016}, the authors note  that Alon and Pikhurko obtained the Tur\'an number of the $r$-expansion of any
 color-critical graph, and this result  can also be derived from Corollary \ref{corr-2}

\begin{cor}
Let $l\geq r\geq 2$ , and $F$ be a color-critical graph with $\chi(F)=l+1$. Then, for all  sufficiently large $n$ and any $n$-vertex $\mathcal{F}$-free  $r$-graph $G$,
we have $e(G)\leq e(T^{r}_{l}(n))$. The equality holds  if and only if  $G=T^{r}_{l}(n)$
\end{cor}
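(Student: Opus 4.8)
The plan is to derive this edge-extremal result from the spectral bound of Theorem~\ref{ccg} by letting $\alpha\to\infty$, using the limit identity $\lim_{\alpha\to\infty}\lambda^{(\alpha)}(H)=r!\,e(H)$ from Lemma~\ref{conti}. For the inequality, fix an arbitrary $\alpha>1$. Theorem~\ref{ccg} supplies a threshold $n_{0}$ (independent of $\alpha$) such that every $n$-vertex $F^{(r)}$-free $r$-graph $G$ with $n>n_{0}$ satisfies $\lambda^{(\alpha)}(G)\le\lambda^{(\alpha)}(T^{r}_{l}(n))$. Letting $\alpha\to\infty$ and invoking Lemma~\ref{conti} on both sides gives $r!\,e(G)\le r!\,e(T^{r}_{l}(n))$, hence $e(G)\le e(T^{r}_{l}(n))$.

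The main obstacle is the equality characterization: passing to the limit in $\alpha$ collapses the equality condition of Theorem~\ref{ccg}, and the hypothesis $e(G)=e(T^{r}_{l}(n))$ carries no information about $\lambda^{(\alpha)}(G)$ at any finite $\alpha$. To recover the structure I would instead apply Theorem~\ref{colore} with $P=K^{r}_{l}$ and $\mathcal{F}=\{F^{(r)}\}$, which is legitimate because the $r$-expansion of an $(l+1)$-color critical graph is degree-stable with respect to $Col(K^{r}_{l})$ (Table~1 of \cite{HLZ2024}, as already used in Theorem~\ref{ccg}). Since $Col(K^{r}_{l})$ is exactly the class of $l$-partite $r$-graphs and $T^{r}_{l}(n)$ is the densest such graph, one has $ex(Col(K^{r}_{l}),n)=e(T^{r}_{l}(n))$; Theorem~\ref{colore} then reproduces the bound $e(G)\le e(T^{r}_{l}(n))$ and, crucially, tells us that equality forces $G$ to be $K^{r}_{l}$-colorable, i.e.\ $l$-partite.

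It remains to upgrade ``$G$ is $l$-partite with $e(G)=e(T^{r}_{l}(n))$'' to ``$G=T^{r}_{l}(n)$''. If $G$ is $l$-partite with part sizes $n_{1},\dots,n_{l}$, then $e(G)$ is at most the $r$-th elementary symmetric polynomial $e_{r}(n_{1},\dots,n_{l})$, with equality exactly when $G$ is complete $l$-partite; and subject to $\sum_{i}n_{i}=n$ this symmetric polynomial is maximized \emph{uniquely} at the balanced integer partition (a one-line smoothing argument: transferring a vertex between two parts that differ by at least two strictly increases the product of those two part sizes while leaving the positive contribution of the remaining parts unchanged, hence strictly increases $e_{r}$). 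Therefore $e(G)=e(T^{r}_{l}(n))$ forces $G$ to be the complete balanced $l$-partite $r$-graph, that is $G=T^{r}_{l}(n)$, completing the proof.
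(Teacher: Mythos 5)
Your proposal is correct, and it diverges from the paper's treatment in a substantive way. The paper disposes of this corollary in a single sentence: let $\alpha\to\infty$ in Theorem \ref{ccg} (just as the preceding corollary is obtained from Corollary \ref{trl}), which gives the inequality via Lemma \ref{conti} but says nothing about why the equality characterization survives the limit. You identify exactly this weak point: the hypothesis $e(G)=e(T^{r}_{l}(n))$ gives no control on $\lambda^{(\alpha)}(G)$ at any finite $\alpha$, so the ``if and only if'' cannot be recovered by naive limiting. Your repair --- invoking Theorem \ref{colore} with $P=K^{r}_{l}$ (legitimate, since the degree stability of $F^{(r)}$ with respect to $Col(K^{r}_{l})$ is exactly what the paper cites before Theorem \ref{ccg}) to force an extremal $G$ to be $l$-partite, then a smoothing argument on the elementary symmetric polynomial $e_{r}(n_{1},\dots,n_{l})$ to show the unique $l$-partite edge-maximizer is $T^{r}_{l}(n)$ --- is complete, and in this respect your write-up is more rigorous than the paper's own derivation. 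Two minor remarks. First, your parenthetical claim that the threshold $n_{0}$ in Theorem \ref{ccg} is independent of $\alpha$ is not what that theorem asserts ($\alpha$ is quantified before $n_{0}$), so your first paragraph has a quantifier gap as written; this is harmless only because it is redundant --- Theorem \ref{colore} already yields $e(G)\le ex(Col(K^{r}_{l}),n)=e(T^{r}_{l}(n))$ with an $\alpha$-free threshold, so you could delete the first paragraph entirely. (The same quantifier subtlety is latent in the paper's one-line derivation and in its proof of Theorem \ref{colore}.) Second, in the smoothing step, the phrase ``positive contribution of the remaining parts'' needs the observation that any partition with $e_{r}>0$ has at least $r$ nonempty parts, so that after singling out the two parts being rebalanced, at least $r-2$ of the others are nonempty; this is a one-line check, but it is what makes the transfer \emph{strictly} increasing.
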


\medskip{The following result, corresponding to Corollary \ref{bound},   follows directly from Theorems  \ref{t8} and \ref{colore}.}

\begin{cor}\label{ebound}
Let $P$ be an $r$-pattern, $r\geq 2$.  Let $\mathcal{F}$ be a family of $r$-graphs that is degree-stable with respect to $\mathrm{Col}(P)$.  Then for all sufficiently large $n$ and any $n$-vertex $\mathcal{F}$-free  $r$-graph $G$,  we have $e(G)\leq\hat{\pi}(\mathrm{Col}(P))n^{r}/r!$.
\end{cor}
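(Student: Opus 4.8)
The plan is to obtain the bound by chaining the two results cited immediately before the statement, namely Theorem~\ref{colore} and Theorem~\ref{t8}, with the only nontrivial ingredient being the verification that $Col(P)$ is a flat property so that Theorem~\ref{t8} is applicable. First I would apply Theorem~\ref{colore}: since $\mathcal{F}$ is degree-stable with respect to $Col(P)$, for all sufficiently large $n$ every $n$-vertex $\mathcal{F}$-free $r$-graph $G$ satisfies $e(G)\leq ex(Col(P),n)$. This reduces the problem to bounding the edge-extremal quantity $ex(Col(P),n)$ from above by $\pi(Col(P))n^{r}/r!$.

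Second, I would recall that $Col(P)$ is hereditary and multiplicative (Fact A in the Appendix), so by Lemma~\ref{flat} it is flat, and in particular $\pi(Col(P))>0$. Now choose an edge-extremal hypergraph $H^{*}\in Col(P)_{n}$ with $e(H^{*})=ex(Col(P),n)$. Applying Theorem~\ref{t8} to the flat property $Col(P)$ and the member $H^{*}$ yields $e(H^{*})\leq \pi(Col(P))n^{r}/r!$, whence $ex(Col(P),n)\leq \pi(Col(P))n^{r}/r!$. Combining this with the inequality from the previous paragraph gives $e(G)\leq ex(Col(P),n)\leq \pi(Col(P))n^{r}/r!$, which is exactly the claimed bound.

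Since the corollary is a direct consequence of two previously established theorems, there is no genuine obstacle here; the only point requiring attention is to make sure the hypotheses of Theorem~\ref{t8} are met, i.e.\ that $Col(P)$ is flat, which is precisely why Lemma~\ref{flat} together with the hereditary-and-multiplicative structure of $Col(P)$ is invoked. One could alternatively obtain the same conclusion by taking the limit $\alpha\to\infty$ in Corollary~\ref{bound} and using Lemma~\ref{conti} (as was done for the other corollaries of this section), but the two-theorem route above is the most economical and avoids any limiting argument.
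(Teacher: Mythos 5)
Your proof is correct and follows exactly the route the paper intends: the paper states that Corollary~\ref{ebound} ``follows directly from Theorems~\ref{t8} and~\ref{colore},'' which is precisely your chain $e(G)\leq ex(Col(P),n)\leq \pi(Col(P))n^{r}/r!$, with the flatness of $Col(P)$ (via Fact~A and Lemma~\ref{flat}) supplied to legitimize the application of Theorem~\ref{t8}. Your spelled-out verification of the hypotheses of Theorem~\ref{t8} is a welcome addition to what the paper leaves implicit.
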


Let $\mathcal{P}$ be a hereditary  property of $r$-graphs. Denote by $EX(\mathcal{P},n)$ (resp. $SPEX_{\alpha}(\mathcal{P},n)$)  the set of $n$-vertex $r$-graphs in $\mathcal{P}_{n}$ achieving the maximum size (resp. maximum $\alpha$-spectral radius) . A natural problem is to investigate the relation between $SPEX_{\alpha}(\mathcal{P},n)$ and $EX(\mathcal{P},n)$. Based on the symmetric structure of the $P$-colorable $r$-graphs, we propose the following question.

\begin{pro}\label{p}
Let $\alpha>1$ and $r\geq2$. For all $r$-patterns $P$, is it true that for sufficiently large $n$,
$$SPEX_{\alpha}(\mathrm{Col}(P),n)\subseteq EX(\mathrm{Col}(P),n)?$$
\end{pro}

This problem is rather difficult in general, but we can give an affirmative answer under certain special conditions.



\begin{thm}\label{hmhm}
Let $\alpha\geq1$ and $r\geq2$. Let $\mathcal{P}$ be a flat property of $r$-graphs. If
$ex(\mathcal{P},n)=\hat{\pi}(\mathcal{P})n^{r}/r!$, then
$$SPEX_{\alpha}(\mathcal{P},n)= EX(\mathcal{P},n).$$
\end{thm}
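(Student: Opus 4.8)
The plan is to prove the two inclusions $EX(\mathcal{P},n)\subseteq SPEX_{\alpha}(\mathcal{P},n)$ and $SPEX_{\alpha}(\mathcal{P},n)\subseteq EX(\mathcal{P},n)$ separately, first pinning down the common extremal value. For the forward inclusion, I would take $G^{*}\in EX(\mathcal{P},n)$, so that $e(G^{*})=\pi(\mathcal{P})n^{r}/r!$ by hypothesis. Evaluating the uniform vector exactly as in \eqref{zhy} gives $\lambda^{(\alpha)}(G^{*})\geq r!e(G^{*})/n^{r/\alpha}=\pi(\mathcal{P})n^{r-r/\alpha}$, while Theorem~\ref{t8} supplies the matching upper bound $\lambda^{(\alpha)}(G^{*})\leq\pi(\mathcal{P})n^{r-r/\alpha}$. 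Hence $\lambda^{(\alpha)}(G^{*})=\pi(\mathcal{P})n^{r-r/\alpha}$; since Theorem~\ref{t8} bounds $\lambda^{(\alpha)}(H)$ by the same quantity for every $H\in\mathcal{P}_{n}$, this value is precisely $\lambda^{(\alpha)}(\mathcal{P},n)$, and every $G^{*}\in EX(\mathcal{P},n)$ therefore lies in $SPEX_{\alpha}(\mathcal{P},n)$.

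For the reverse inclusion when $\alpha>1$, I would take $G\in SPEX_{\alpha}(\mathcal{P},n)$, so that $\lambda^{(\alpha)}(G)=\pi(\mathcal{P})n^{r-r/\alpha}$ by the previous paragraph. The key step is to invert Lemma~\ref{ttt6}, which reads $\lambda^{(\alpha)}(G)\leq\pi(\mathcal{P})^{1/\alpha}(r!e(G))^{1-1/\alpha}$. Substituting the known value of $\lambda^{(\alpha)}(G)$ and using $n^{r-r/\alpha}=(n^{r})^{1-1/\alpha}$ yields $(\pi(\mathcal{P})n^{r})^{1-1/\alpha}\leq(r!e(G))^{1-1/\alpha}$. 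Since $1-1/\alpha>0$, raising both positive sides to the power $\alpha/(\alpha-1)$ gives $r!e(G)\geq\pi(\mathcal{P})n^{r}$, that is, $e(G)\geq ex(\mathcal{P},n)$. Combined with the trivial bound $e(G)\leq ex(\mathcal{P},n)$, this forces $e(G)=ex(\mathcal{P},n)$, so $G\in EX(\mathcal{P},n)$. This settles the case $\alpha>1$ completely and essentially mechanically, the only real content being the two-sided sharp estimates of Theorem~\ref{t8} and Lemma~\ref{ttt6}.

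The step I expect to be the main obstacle is the boundary case $\alpha=1$, where the inversion above collapses: in Lemma~\ref{ttt6} the exponent $1-1/\alpha$ vanishes, so the bound degenerates to $\lambda^{(1)}(G)\leq\pi(\mathcal{P})$ and carries no information about $e(G)$. The forward inclusion still survives verbatim, since the uniform-vector computation in \eqref{zhy} together with Theorem~\ref{t8} still yield $\lambda^{(1)}(\mathcal{P},n)=\pi(\mathcal{P})$ and hence $EX(\mathcal{P},n)\subseteq SPEX_{1}(\mathcal{P},n)$; it is the reverse inclusion that is genuinely delicate. My first attempt would be a limiting argument based on Lemma~\ref{conti}: since $\lambda^{(\alpha)}(G)$ is continuous and increasing in $\alpha$, letting $\alpha\to 1^{+}$ transfers identities down from the solved range. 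The difficulty is that continuity preserves only the non-strict inequality $\lambda^{(1)}(G)\leq\lambda^{(1)}(G^{*})$ for $G^{*}\in EX(\mathcal{P},n)$, and does not by itself rule out a spectral-extremal $G$ of strictly smaller size. Closing this appears to require extracting genuine rigidity from the exact-density hypothesis $ex(\mathcal{P},n)=\pi(\mathcal{P})n^{r}/r!$, along the lines of the Maclaurin-type equality analysis used in the proof of Lemma~\ref{t2}, so as to force a maximizer of the Lagrangian at $\alpha=1$ to be degree-regular and therefore edge-extremal. I regard making this last reduction rigorous as the crux of the proof.
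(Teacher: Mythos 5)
For $\alpha>1$ your argument coincides with the paper's proof: the forward inclusion via the uniform vector as in \eqref{zhy} together with Theorem~\ref{t8}, and the reverse inclusion by inverting Lemma~\ref{ttt6} after pinning down $\lambda^{(\alpha)}(\mathcal{P},n)=\pi(\mathcal{P})n^{r-r/\alpha}$. So that part is correct and identical in substance to the paper.

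The obstacle you flagged at $\alpha=1$ is not a failure of your imagination --- it is a genuine gap, and the paper's own proof silently commits it. The paper applies Lemma~\ref{ttt6} to $H\in SPEX_{\alpha}(\mathcal{P},n)$ and asserts that $\pi(\mathcal{P})^{1/\alpha}(r!e(H))^{1-1/\alpha}\geq \pi(\mathcal{P})n^{r-r/\alpha}$ ``implies $e(H)\geq \pi(\mathcal{P})n^{r}/r!$''; exactly as you observed, at $\alpha=1$ the exponent $1-1/\alpha$ vanishes, the inequality degenerates to $\pi(\mathcal{P})\geq\pi(\mathcal{P})$, and nothing about $e(H)$ follows. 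Moreover, your hope of repairing the case $\alpha=1$ by a rigidity argument cannot succeed, because the inclusion $SPEX_{1}(\mathcal{P},n)\subseteq EX(\mathcal{P},n)$ is simply false. Take $\mathcal{P}=Col(K^{r}_{l})$ with $l\geq r$ and $l\mid n$: then $ex(\mathcal{P},n)=e(T^{r}_{l}(n))=\frac{(l)_{r}}{l^{r}}\cdot\frac{n^{r}}{r!}=\pi(\mathcal{P})n^{r}/r!$, so the hypothesis of the theorem holds. But the $r$-graph $G_{0}$ consisting of one copy of $K^{r}_{l}$ together with $n-l$ isolated vertices lies in $\mathcal{P}_{n}$ and satisfies $\lambda^{(1)}(G_{0})\geq P_{K^{r}_{l}}\big((1/l,\ldots,1/l,0,\ldots,0)\big)=(l)_{r}/l^{r}=\pi(\mathcal{P})$, while Theorem~\ref{t8} gives $\lambda^{(1)}(H)\leq\pi(\mathcal{P})$ for every $H\in\mathcal{P}_{n}$. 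Hence $G_{0}\in SPEX_{1}(\mathcal{P},n)$ even though $e(G_{0})=1$, so $G_{0}\notin EX(\mathcal{P},n)$. This is consistent with the paper's own Corollary~\ref{trl}, where equality at $\alpha=1$ is characterized by $G\supseteq K^{r}_{l}$ rather than by $G=T^{r}_{l}(n)$. The correct formulation of the theorem therefore requires $\alpha>1$ for the inclusion $SPEX_{\alpha}(\mathcal{P},n)\subseteq EX(\mathcal{P},n)$; only the inclusion $EX(\mathcal{P},n)\subseteq SPEX_{1}(\mathcal{P},n)$ survives at $\alpha=1$, which is precisely the part of your argument (and of the paper's) that goes through.
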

\begin{proof}
For any $G\in EX(\mathcal{P},n)$, by inequality (\ref{zhy}), we have
$$\lambda^{(\alpha)}(G)\geq r!e(G)/n^{r/\alpha}=\hat{\pi}(\mathcal{P})n^{r-r/\alpha}.$$
On the other hand, Theorem \ref{t8} yields  $\lambda^{(\alpha)}(G)\leq\hat{\pi}(\mathcal{P})n^{r-r/\alpha}$. Consequently, $$\lambda^{(\alpha)}(G)=\lambda^{(\alpha)}(\mathcal{P},n)=\hat{\pi}(\mathcal{P})n^{r-r/\alpha}.$$
Therefore, $EX(\mathcal{P},n)\subseteq SPEX_{\alpha}(\mathcal{P},n)$.

For any $H\in SPEX_{\alpha}(\mathcal{P},n)$,
we have $\lambda^{(\alpha)}(H)=\lambda^{(\alpha)}(\mathcal{P},n)=\hat{\pi}(\mathcal{P})n^{r-r/\alpha}$.
Applying Lemma \ref{ttt6}, we obtain
$$\hat{\pi}(\mathcal{P})^{1/\alpha}(r!e(H))^{1-1/\alpha}\geq \hat{\pi}(\mathcal{P})n^{r-r/\alpha},$$
which implies $e(H)\geq \hat{\pi}(\mathcal{P})n^{r}/r!$. This shows that
 $SPEX_{\alpha}(\mathcal{P},n)\subseteq EX(\mathcal{P},n)$.
\end{proof}

\noindent {\bf Remark.} The result above generalizes Theorem  6.1 of \cite{LNWK2024}. This extension follows from the fundamental observation that  an $r$-graph $H$ is $\mathcal{M}$-hom-free (as defined in \cite{LNWK2024}) if and only if $H(\mathbf{t})$ is $\mathcal{M}$-free for every $\mathbf{t}\in \mathbb{Z}^{\nu(H)}$, which implies that $Mon(\mathcal{M})$ is multiplicative. Further details are omitted here.

\begin{cor}
Let $\alpha\geq1$, and let $P$ be an $r$-pattern. If
$ex(\mathrm{Col}(P),n)=\hat{\pi}(\mathrm{Col}(P))n^{r}/r!$, then
$$SPEX_{\alpha}(\mathrm{Col}(P),n)= EX(\mathrm{Col}(P),n).$$
\end{cor}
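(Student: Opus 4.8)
The plan is to recognize this statement as an immediate specialization of Theorem~\ref{hmhm}. The only fact I need to supply is that $Col(P)$ is a \emph{flat} property of $r$-graphs; once that is in hand, the hypothesis $ex(Col(P),n)=\pi(Col(P))n^{r}/r!$ is precisely the extremal hypothesis of Theorem~\ref{hmhm} applied with $\mathcal{P}=Col(P)$, and there is nothing further to prove.

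To establish flatness, I would invoke the two structural facts about $Col(P)$ already recorded in the paper: it is hereditary and it is multiplicative (Fact~A in the Appendix). Hereditariness is clear because restricting a homomorphism $\phi\colon V(H)\to[l]$ to an induced subgraph again witnesses $P$-colorability, and multiplicativity holds because assigning each vertex of a blown-up class the color of its parent vertex extends a homomorphism from $H$ to one from any blow-up $H(\mathbf{t})$. By Lemma~\ref{flat}, every hereditary multiplicative property is flat, so $Col(P)$ is flat and in particular $\pi(Col(P))>0$, matching the standing hypotheses of Theorem~\ref{hmhm}.

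With flatness secured, I would simply apply Theorem~\ref{hmhm} with $\mathcal{P}=Col(P)$: the assumed identity $ex(Col(P),n)=\pi(Col(P))n^{r}/r!$ is exactly the required condition, and the conclusion $SPEX_{\alpha}(Col(P),n)=EX(Col(P),n)$ follows at once. I do not expect any genuine obstacle: the corollary is a bookkeeping specialization, with all of the analytic content already carried by Theorem~\ref{hmhm} itself (whose proof plays the lower bound from the uniform test vector in \eqref{zhy} against the upper bounds of Theorem~\ref{t8} and Lemma~\ref{ttt6}) and by the flatness of $Col(P)$ coming from Lemma~\ref{flat}.
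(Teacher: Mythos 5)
Your proposal is correct and follows exactly the route the paper intends: the corollary is stated as an immediate consequence of Theorem~\ref{hmhm}, with the flatness of $Col(P)$ supplied by Fact~A (hereditary and multiplicative) together with Lemma~\ref{flat}, just as you argue. Nothing is missing; the specialization $\mathcal{P}=Col(P)$ requires no further work.
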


 Applying Theorem \ref{tp} yields  the following result directly.

\begin{prop}
Let $\alpha>1$, $r\geq 2$, and let $P$ be an  $r$-pattern. Let $\mathcal{F}$ be a family of $r$-graphs that is degree-stable with respect to  $\mathrm{Col}(P)$. If the answer to Problem \ref{p} is affirmative, then
$$SPEX_{\alpha}(Mon(\mathcal{F}),n)\subseteq EX(Mon(\mathcal{F}),n)$$
for sufficiently large $n$.
\end{prop}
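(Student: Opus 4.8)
The plan is to run the chain $SPEX_{\alpha}(Mon(\mathcal{F}),n) \subseteq SPEX_{\alpha}(Col(P),n) \subseteq EX(Col(P),n) = EX(Mon(\mathcal{F}),n)$, where the first inclusion comes from Theorem~\ref{tp}, the second is the assumed affirmative answer to Problem~\ref{p}, and the final identification of the two edge-extremal classes comes from Theorem~\ref{colore}.

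First I would fix, for $n$ large, a graph $H \in SPEX_{\alpha}(Mon(\mathcal{F}),n)$, so that $\lambda^{(\alpha)}(H) = \lambda^{(\alpha)}(Mon(\mathcal{F}),n)$. The induction carried out in the proof of Theorem~\ref{tp} shows that a spectral extremal $\mathcal{F}$-free $r$-graph lies in $Col(P)_{n}$ for all sufficiently large $n$; in particular this gives the identity $\lambda^{(\alpha)}(Mon(\mathcal{F}),n) = \lambda^{(\alpha)}(Col(P),n)$. Consequently $H$ attains equality in the bound $\lambda^{(\alpha)}(H) \le \lambda^{(\alpha)}(Col(P),n)$ of Theorem~\ref{tp}, so by its equality clause $H$ is $P$-colorable. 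Thus $H \in Col(P)_{n}$ and $\lambda^{(\alpha)}(H) = \lambda^{(\alpha)}(Col(P),n)$, i.e. $H \in SPEX_{\alpha}(Col(P),n)$.

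Next I would feed this into the hypothesis that Problem~\ref{p} is affirmative, which yields $SPEX_{\alpha}(Col(P),n) \subseteq EX(Col(P),n)$ for large $n$, and hence $e(H) = ex(Col(P),n)$. It then remains only to match the two Tur\'an numbers. On one hand, the definition of degree-stability forces $Col(P)$ to consist of $\mathcal{F}$-free $r$-graphs, so $Col(P) \subseteq Mon(\mathcal{F})$ and $ex(Col(P),n) \le ex(Mon(\mathcal{F}),n)$. On the other hand, Theorem~\ref{colore} bounds $e(G) \le ex(Col(P),n)$ for every $\mathcal{F}$-free $G$ on $n$ vertices, giving $ex(Mon(\mathcal{F}),n) \le ex(Col(P),n)$. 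Combining the two inequalities gives $ex(Mon(\mathcal{F}),n) = ex(Col(P),n) = e(H)$, so $H \in EX(Mon(\mathcal{F}),n)$, as required.

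Since every ingredient is already in hand, there is no genuine computation to perform; the whole argument is a matter of threading together three earlier results. The one place that demands care — and the only real obstacle — is that the equality $\lambda^{(\alpha)}(Mon(\mathcal{F}),n) = \lambda^{(\alpha)}(Col(P),n)$ is not isolated as a separate statement but is embedded in the inductive proof of Theorem~\ref{tp}; I would either cite that induction explicitly or extract it as a short preliminary observation before invoking the equality clause of Theorem~\ref{tp}.
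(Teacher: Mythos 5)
Your proposal is correct and matches the paper's intended argument: the paper gives no separate proof, stating only that the proposition follows directly from Theorem~\ref{tp} (combined with the affirmative answer to Problem~\ref{p} and the identification of Tur\'an numbers via Theorem~\ref{colore}), which is exactly the chain you spell out. One small simplification: the identity $\lambda^{(\alpha)}(Mon(\mathcal{F}),n)=\lambda^{(\alpha)}(Col(P),n)$ need not be extracted from the induction inside the proof of Theorem~\ref{tp}; it already follows from the \emph{statement} of Theorem~\ref{tp} together with the containment $Col(P)\subseteq Mon(\mathcal{F})$ (which, as you note, is built into the definition of degree stability), so no appeal to the internals of that proof is required.
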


\section*{Data availability}
 No data was used for the research described in the article.

\appendix

\section*{Appendix}

\begin{proof}[{\bf Proof of Lemma \ref{exi}}]
For every integer $n> r$, define $\lambda^{(\alpha)}_{n}:=\lambda^{(\alpha)}(\mathcal{P},n)$. Let $G\in \mathcal{P}_{n}$
be an $r$-graph satisfying $\lambda^{(\alpha)}(G)=\lambda^{(\alpha)}(\mathcal{P},n)$, and let  $\mathbf{x}=(x_1,\ldots,x_n)$ be a principal eigenvector for $\lambda^{(\alpha)}(G)$ with $\|\mathbf{x}\|_{\alpha}=1$.

For $\alpha=1$,  by assumption $(b)$, we have $\lambda^{(1)}_{n}\geq \lambda^{(1)}_{n-1}$.
Moreover, Maclaurin's inequality implies
$$\lambda_{n}^{(1)}=P_{G}(\mathbf{x})\leq r!\sum_{1\leq i_{1}<\cdots<i_{r} \leq n}x_{i_{1}}\cdots x_{i_{r}}
\leq (x_{1}+\cdots+x_{n})^{r}=1.$$
Thus, the sequence $\Big\{\lambda_{n}^{(1)}\Big\}_{n=1}^{\infty}$  converges to a limit $\lambda$,  and we conclude
$$\lambda=\lim\limits_{\alpha\to \infty}\lambda_{n}^{(1)}n^{r-r}=\lambda^{(1)}(\mathcal{P}).$$

For $\alpha>1$, noting that $(\mathbf{x}_{\textup{min}})^{\alpha}\leq 1/n$,
by (\ref{e8}) and Fact $1$, we have
\begin{equation*}
 \frac{\lambda^{(\alpha)}_{n-1}}{\lambda^{(\alpha)}_{n}}
\geq\frac{1-r(\mathbf{x}_{\textup{min}})^{\alpha}}{(1-(\mathbf{x}_{\textup{min}})^{\alpha})^{r/\alpha}}\geq
   \frac{1-r/n}{(1-1/n)^{r/\alpha}}.
\end{equation*}
This implies that
$$\frac{\lambda^{(\alpha)}_{n-1}(n-1)^{r/\alpha}}{(n-1)_{r}}\geq \frac{\lambda^{(\alpha)}_{n}n^{r/\alpha}}{(n)_{r}}.$$
Therefore, the sequence $\Big\{\frac{\lambda^{(\alpha)}_{n}n^{r/\alpha}}{(n)_{r}}\Big\}_{n=1}^{\infty}$ is decreasing and hence convergent. This completes the proof.
\end{proof}

\begin{proof}[{\bf Proof of Theorem \ref{t8}}]
Let $\mathbf{x}=(x_1,\ldots,x_n)$ be a principal eigenvector for $\lambda^{(\alpha)}(G)$ with $\|\mathbf{x}\|_{\alpha}=1$. Then
$$\frac{\lambda^{(\alpha)}(G)}{n^{r-r/\alpha}}\leq \frac{P_{G}(\mathbf{x})}{n^{r-r/\alpha}}
=P_{G}((x_1/n^{1-1/\alpha},\ldots,x_n/n^{1-1/\alpha})).$$
By Power-Mean inequality, for any $\alpha\geq1$,
$$\frac{x_1+\cdots+x_n}{n^{1-1/\alpha}}\leq(x_1^\alpha+\cdots+x_n^\alpha)^{1/\alpha}=1.$$
From  Lemma \ref{exi} it follows that
$$\frac{\lambda^{(\alpha)}(G)}{n^{r-r/\alpha}}\leq \lambda^{(1)}(G)\leq \lambda^{(1)}(\mathcal{P},n)\leq\lambda^{(1)}(\mathcal{P})=\pi(\mathcal{P}).$$
Since $\lambda^{(\alpha)}(G)\geq r!e(G)/n^{r/\alpha}$, we conclude
$$e(G)\leq\pi(\mathcal{P})n^{r}/r!,$$
completing the proof.
\end{proof}

\noindent {\bf Fact A.}  Let $P=([l],E)$ be an $r$-pattern. Then $\mathrm{Col}(P)$ is hereditary and multiplicative.
\begin{proof}
We first show that $\mathrm{Col}(P)$ is hereditary. Let $H \in \mathrm{Col}(P)$ and let $H'$ be an induced subgraph of $H$. There exists a homomorphism $\phi: V(H) \to [l]$ such that for every edge $\{v_1, \dots, v_r\} \in E(H)$, the multiset $\{\phi(v_1), \dots, \phi(v_r)\} \in E$. Restrict $\phi$ to $V(H')$. For any edge $\{v_1, \dots, v_r\} \in E(H')$, since $H'$ is induced, this edge is also in $E(H)$, so $\{\phi(v_1), \dots, \phi(v_r)\} \in E$. Thus, $\phi|_{V(H')}$ is a homomorphism from $H'$ to $P$, so $H' \in \mathrm{Col}(P)$.

 Next, we prove that $\mathrm{Col}(P)$ is multiplicative. Let $H \in \mathrm{Col}(P)_n$ and let $\mathbf{t} \in \mathbb{Z}^n_{>0}$. Consider the blow-up $H(\mathbf{t})$. There exists a homomorphism $\phi: V(H) \to [l]$. Define $\psi: V(H(\mathbf{t})) \to [l]$ by mapping every vertex in the blow-up of $v \in V(H)$ to $\phi(v)$. For any edge $\{w_1, \dots, w_r\} \in E(H(\mathbf{t}))$, each $w_i$ lies in the blow-up of some $v_i \in V(H)$, and $\{v_1, \dots, v_r\}$ is an edge in $H$ (by definition of blow-up). Then $\{\psi(w_1), \dots, \psi(w_r)\} = \{\phi(v_1), \dots, \phi(v_r)\} \in E$. Thus, $\psi$ is a homomorphism from $H(\mathbf{t})$ to $P$, so $H(\mathbf{t}) \in \mathrm{Col}(P)$.

Hence, $\mathrm{Col}(P)$ is hereditary and multiplicative.
\end{proof}

\end{document}